\newtheorem*{rep@theorem}{\rep@title}
\newcommand{\newreptheorem}[2]{%
	\newenvironment{rep#1}[1]{%
		\def\rep@title{#2 \ref{##1}}%
		\begin{rep@theorem}}%
		{\end{rep@theorem}}}
\newtheorem{theorem}{Theorem}[section]
\newtheorem{cor}[theorem]{Corollary}
\newtheorem{lemma}[theorem]{Lemma}
\newtheorem{prop}[theorem]{Proposition}
\theoremstyle{definition}
\newtheorem{defin}[theorem]{Definition}
\newtheorem{fact}[theorem]{Fact}
\newtheorem{que}[theorem]{Question}
\theoremstyle{remark}
\newtheorem*{rem}{Remark}
\newcommand{\flim}[1]{\mathrm{Flim}(#1)}
\newcommand{\fin}[1]{\mathrm{Fin}(#1)}
\newcommand{\fr}{Fra\"iss\'e }
\renewcommand{\phi}{\varphi}
\newcommand{\emb}[1]{\mathrm{Emb}(#1)}
\newcommand{\aut}[1]{\mathrm{Aut}(#1)}
\renewcommand{\b}{\backslash}
\newcommand{\Sa}{\mathrm{Sa}}
\begin{document}
	\title{Maximally highly proximal flows}
	\author{Andy Zucker}
	\date{February 2019; revised October 2019}
	\maketitle
	
\begin{abstract}
	For $G$ a Polish group, we consider $G$-flows which either contain a comeager orbit or have all orbits meager. We single out a class of flows, the \emph{maximally highly proximal} (MHP) flows, for which this analysis is particularly nice. In the former case, we provide a complete structure theorem for flows containing comeager orbits, generalizing theorems of Melleray-Nguyen Van Th\'e-Tsankov and Ben Yaacov-Melleray-Tsankov. In the latter, we show that any minimal MHP flow with all orbits meager has a metrizable factor with all orbits meager, thus ``reflecting'' complicated dynamical behavior to metrizable flows. We then apply this to obtain a structure theorem for Polish groups whose universal minimal flow is distal. 
	\let\thefootnote\relax\footnote{2010 Mathematics Subject Classification. Primary: 37B05; Secondary: 54H20, 03E15.}
	\let\thefootnote\relax\footnote{The author was supported by NSF Grant no.\ DMS 1803489.}
\end{abstract}
	
\section{Introduction}

Let $G$ be a Polish group. A \emph{$G$-flow} is a compact Hausdorff space equipped with a continuous (right) $G$-action $X\times G\to X$. If $X$ and $Y$ are $G$-flows, a map $\phi\colon X\to Y$ is a \emph{$G$-map} if $\phi$ is continuous and respects the $G$-actions. A \emph{subflow} of a $G$-flow $X$ is any non-empty closed invariant subspace $Y\subseteq X$. We say $X$ is \emph{minimal} if the only subflow of $X$ is $X$ itself. Equivalently, $X$ is minimal if for every $x\in X$, the orbit $x\cdot G\subseteq X$ is dense. Notice that if $\phi\colon X\to Y$ is a $G$-map, then the image $\phi[X]\subseteq Y$ is a subflow; if $X$ is minimal, so is $\phi[X]$, and if $Y$ is minimal, then $\phi$ is surjective. We often call a surjective $G$-map a \emph{factor}.

By a classical theorem of Ellis, there is a \emph{universal minimal flow} $M(G)$; this is a minimal $G$-flow which admits a $G$-map onto any other minimal $G$-flow, and $M(G)$ is unique up to isomorphism. The study of $M(G)$ is useful because it captures information about all minimal $G$-flows. For instance, if $M(G)$ is metrizable, then every minimal $G$-flow is metrizable, and if $M(G)$ has a (necessarily unique) comeager orbit, then so does every minimal $G$-flow \cite{AKL}. However, $M(G)$ is often very complicated; for example, if $G$ is locally compact, then $M(G)$ is never metrizable, and all of its orbits are meager. However, there are Polish groups $G$ for which $M(G)$ is a singleton, and many others for which $M(G)$ is metrizable and has a concrete description. See \cite{KPT} for several examples of these phenomena. 

The starting point of this paper is the following theorem, first proved by the author \cite{ZucMetr} in the case that $G$ is non-Archimedean, and then by Ben Yaacov, Melleray, and Tsankov \cite{BYMT} for general Polish groups.
\vspace{2 mm}

\begin{fact}
	\label{Fact:MetrizableImpliesComeager}
	If $G$ is a Polish group and $M(G)$ is metrizable, then $M(G)$ has a comeager orbit.
\end{fact}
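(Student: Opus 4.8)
The plan is to first reduce the problem to producing a \emph{single} non-meager orbit, and then to use the universality of $M(G)$ together with metrizability to manufacture one.

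First I would record a topological zero--one law. Since $M(G)$ is compact metrizable it is Polish, the action $M(G)\times G\to M(G)$ is continuous, and minimality says every orbit is dense. Each orbit $x\cdot G$ is a continuous image of $G$, hence analytic, hence has the Baire property. I claim every invariant $A\subseteq M(G)$ with the Baire property is meager or comeager: if $A$ is non-meager it is comeager in some nonempty open $U$, so by invariance (each $g$ acts as a homeomorphism) it is comeager in every translate $U\cdot g$; the open invariant set $\bigcup_g U\cdot g$ has closed invariant complement, hence equals $M(G)$ by minimality, and by second countability (Lindel\"of) countably many translates $U\cdot g_n$ already cover $M(G)$, whence $M(G)\setminus A=\bigcup_n (U\cdot g_n\setminus A)$ is meager. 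Applying this to each orbit, every orbit is meager or comeager, and a comeager orbit is automatically unique. So it suffices to exhibit one non-meager orbit.

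Next I would bring in universality. Realize $M(G)$ as a factor of the greatest $G$-ambit $S(G)$ (the Samuel compactification of $G$), in which the distinguished point $x_0$ has dense orbit $x_0\cdot G$ identified with $G$ itself; choosing the factor map $\pi\colon S(G)\to M(G)$ so that $\pi(x_0)=x$ for a target point $x$, we have $\pi(x_0\cdot g)=x\cdot g$, so the restriction of $\pi$ to $x_0\cdot G$ is exactly the orbit map $g\mapsto x\cdot g$. The goal becomes to show that this orbit $x\cdot G$ is non-meager in $M(G)$.

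The main obstacle is precisely this last step, and it is where both metrizability and universality are essential: an arbitrary metrizable minimal flow can have all orbits meager (e.g.\ an irrational rotation of the circle under $\mathbb{Z}$), so no argument using only the abstract flow can work, and the hypothesis that the \emph{universal} flow is metrizable is a strong constraint on $G$. My plan here is a Baire-category transfer. Using a countable basis of $M(G)$ supplied by metrizability, I would run a genericity (Banach--Mazur) argument inside the Polish group $G$ to produce a point whose orbit meets every basic open set in a suitably generic fashion, and then show the complement of its orbit is a countable union of nowhere-dense sets indexed by the basis, forcing the orbit to be comeager. Equivalently, by the Effros theorem it would suffice to show that for some $x$ the orbit map $G\to x\cdot G$ is open (micro-transitive), and I expect metrizability of $M(G)$ to be exactly what lets the category argument in $G$ be pushed through $\pi$ to yield this openness. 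Controlling the interaction between the category of $G$, the non-metrizable ambit $S(G)$, and the metric on $M(G)$ is the delicate heart of the proof.
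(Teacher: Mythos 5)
Your first two steps are sound but routine: the topological zero--one law for invariant sets with the Baire property in a minimal metrizable flow is standard, and realizing $M(G)$ as a factor of $\Sa(G)$ is exactly how everyone sets this up. The problem is that your third step --- the one you yourself call ``the delicate heart of the proof'' --- is not an argument but a declaration of intent. You say you would ``run a genericity (Banach--Mazur) argument'' and that you ``expect metrizability of $M(G)$ to be exactly what lets the category argument in $G$ be pushed through $\pi$,'' but you never identify the mechanism by which this happens. As you correctly note, no argument internal to the flow can work (irrational rotation), and category in $G$ alone does not transfer through the factor map $\pi\colon \Sa(G)\to M(G)$: the fibers of $\pi$ are enormous, $\Sa(G)$ is non-metrizable, and nothing in your sketch explains why a comeager set of $g\in G$ should map to a non-meager subset of the orbit $x\cdot G$. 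This is a genuine gap, not a compressible detail.

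The missing idea --- the one used in \cite{BYMT} and reproduced in this paper in its MHP formulation --- is an auxiliary \emph{metric} structure on $\Sa(G)$, not just its topology: one defines a lower semi-continuous metric $\partial(p,q)=\sup\{|f(p)-f(q)|\colon f\colon G\to[0,1] \text{ $1$-Lipschitz}\}$ (Definition~\ref{Def:TopometricSG}, generalized in Definition~\ref{Def:TopometricGeneral}), making $M(G)$ a compact topometric space. The two key facts are then: (a) if $M(G)$ is metrizable, $\partial$ is \emph{compatible} with the topology on $M(G)$ (this is where metrizability enters, via Theorem~\ref{Thm:Metrizability}: otherwise $M(G)$ would embed a copy of $\beta\omega$); and (b) compatibility feeds into a criterion of Rosendal for the existence of a comeager orbit --- it suffices that for every $\epsilon>0$, every nonempty open set contains a nonempty open $B$ on which the local action of $U_\epsilon$ is topologically transitive. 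With $\partial$ compatible, one simply takes $B$ of $\partial$-diameter less than $\epsilon$: for any nonempty open $C_0,C_1\subseteq B$ and $p_i\in C_i$, we get $\partial(p_0,p_1)<\epsilon$, hence $p_1\in\overline{C_0U_\epsilon}$ and $C_0U_\epsilon\cap C_1\neq\emptyset$ (this is exactly the step $(2)\Rightarrow(3)$ in Theorem~\ref{IntroThm:MHP}). Your Effros/micro-transitivity reformulation is a reasonable endpoint, but without the Lipschitz metric $\partial$ (or an equivalent device linking the left uniformity of $G$ to distances in $M(G)$) there is no way to verify it, and your proposal as written cannot be completed.
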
 
\vspace{2 mm}

This theorem along with the structure theorem due to Melleray, Nguyen Van Th\'e, and Tsankov \cite{MNT} provide a complete understanding of the structure of $M(G)$ when it is metrizable. However, the property of $M(G)$ having a comeager orbit remained less well understood. Indeed, it was only recently shown, by an example of Kwiatkowska \cite{Kw}, that the converse of Fact~\ref{Fact:MetrizableImpliesComeager} does not hold. 

The study of $M(G)$ is often undertaken by attempting to understand the \emph{Samuel compactification} $\Sa(G)$, the Gelfand space of the bounded left uniformly continuous functions on $G$. The group $G$ canonically embeds into $\Sa(G)$, and for any $G$-flow $X$ and any $x\in X$, there is a unique $G$-map $\lambda_x\colon \Sa(G)\to X$ with $\lambda_x(1_G) = x$. In particular, any minimal subflow of $\Sa(G)$ is isomorphic to $M(G)$. The main technical tool introduced in \cite{BYMT} is to view $\Sa(G)$ as a \emph{topometric space}, a topological space endowed with a possibly finer metric $\partial$ which interacts with the topology in nice ways. Letting $\partial$ denote this finer metric, the authors of \cite{BYMT} show that if $M\subseteq \Sa(G)$ is a compact metrizable subspace, then $\partial|_M$ is a compatible metric. When the metrizable $M\subseteq \Sa(G)$ is a minimal subflow, the properties of the metric $\partial|_M$ allow them to show that $M$ has a comeager orbit. However, much remained unclear about this metric, especially when $M(G)$ is non-metrizable. Namely, if $M\subseteq \Sa(G)$ is a minimal subflow, can we define $\partial|_M$ just using the dynamics of $M$? 

This paper singles out a class of flows, the \emph{maximally highly proximal flows}, or \emph{MHP flows}, which all admit a canonical topometric structure. In particular, $M(G)$ and $\Sa(G)$ are both MHP, and the topometric on $M(G)$ agrees with the metric inherited by any minimal subflow of $\Sa(G)$. Using this topometric structure, we provide a structure theorem for MHP flows with a comeager orbit. Here, a \emph{compatibility point} is a point in $X$ where the topology and the metric coincide (see Definition~\ref{Def:CompatiblePoint}). 
\vspace{2 mm}

\begin{reptheorem}{IntroThm:MHP}
	Let $X$ be an MHP flow. The following are equivalent.
	\vspace{-2 mm}
	
	\begin{enumerate}
		\item 
		$X$ has a compatibility point with dense orbit.
		\item 
		The set $Y\subseteq X$ of compatibility points is comeager, Polish, and contains a point with dense orbit.
		\item 
		$X$ has a comeager orbit.
		\item 
		$X\cong \Sa(H\backslash G)$ for some closed subgroup $H\subseteq G$ (see Section~\ref{Sec:Samuel})
	\end{enumerate}
\end{reptheorem}
\vspace{2 mm}

In Theorem~\ref{Thm:ExAmSub}, we generalize the main result of \cite{MNT} by considering the case that $X \in \{M(G), \Pi(G), \Pi_s(G)\}$, where $\Pi(G)$ and $\Pi_s(G)$ are the \emph{universal minimal proximal flow} and the \emph{Furstenberg boundary}, respectively. In the first and third case, we show that the closed subgroup $H$ appearing in item $(4)$ is extremely amenable or amenable, respectively, and in the second case we present a partial result towards showing that $H$ is strongly amenable. 

As an application of Theorem~\ref{IntroThm:MHP}, we prove the following ``reflection'' theorem, which shows that complicated dynamical behavior of the group $G$ already appears in the realm of metrizable flows. Note that in minimal flows, all orbits are either meager or comeager.
\vspace{2 mm}

\begin{reptheorem}{Thm:Reflection}
	Let $X$ be a minimal MHP flow all of whose orbits are meager. Then there is a factor $\phi\colon X\to Y$ so that $Y$ is metrizable and also has all orbits meager.
\end{reptheorem}
\vspace{2 mm}

This theorem was first suggested in \cite{BYMT}, but in private communication with the authors, it was realized that the problem remained open. 

As an application of Theorem~\ref{Thm:Reflection}, we give a complete characterization of when $M(G)$ is distal in Theorem~\ref{Thm:DistalFlows} and Corollary~\ref{Cor:DistalForm}. The theorem says that if $M(G)$ is distal, then $M(G)$ is metrizable. Then using results from \cite{MNT}, the corollary shows that any such $G$ has a normal, extremely amenable subgroup $H$ with $M(G)\cong H\backslash G$.

\subsection*{Acknowledgements} I thank Todor Tsankov for many helpful discussions, including the suggestion that Theorem 7.5 is true. Some of the work here builds on work in my Ph.D.\ thesis, and I thank Clinton Conley for his guidance in its completion. I also thank the referee for many helpful suggestions on an earlier draft.

\subsection*{Notation}

We will use some non-standard notation. The phrases ``non-empty open subset of,'' ``open neighborhood of,'' etc.\ occur often enough that we introduce some notation for this. If $X$ is a topological space, then $A\subseteq_{op} X$ will mean that $A$ is a non-empty open subset of $X$. If $x\in X$, we write $x\in_{op} A$ or $A\ni_{op} x$ to mean that $A\subseteq X$ is an open neighborhood of $x$. Omitting the ``op'' subscript does not mean that a given set is not open; it is just an easy way to introduce and/or emphasize open sets.

Other notation is mostly standard. We write $\omega = \{0,1,2,\ldots\}$, and we identify a non-negative integer with the set of its predecessors, i.e.\ $n = \{0,\ldots, n-1\}$. If $f\colon X\to Y$ is a function and $K\subseteq X$, we set $f[K] := \{f(x): x\in K\}$. All topological spaces we consider are Hausdorff.
\vspace{2 mm}

\section{Topometric spaces}

This short section collects the background material on topometric spaces that we will need going forward. Most of the material here can be found in \cite{BY} or \cite{BYM}.
\vspace{2 mm}

\begin{defin}
	\label{Def:TopometricSpace}
	A \emph{compact topometric space} is a triple $(X, \tau, \partial)$, where $(X, \tau)$ is a compact Hausdorff space and $\partial$ is a metric which is \emph{lower semi-continuous}, meaning that for every $c\geq 0$, the set $\{(p, q)\in X^2: \partial(p, q)\leq c\}$ is $(\tau\times\tau)$-closed.
\end{defin}
\vspace{2 mm}

Note that the metric need not agree with the underlying topology. As a convention, when discussing a topometric space, topological vocabulary will refer to $\tau$, while metric vocabulary will refer to $\partial$.
\vspace{2 mm}

\begin{fact}
	\label{Fact:TopometricFacts}
	Let $(X, \tau, \partial)$ be a compact topometric space.
	\vspace{-2 mm}
	
	\begin{enumerate}
		\item 
		The metric $\partial$ is finer than the topology.
		\item 
		The metric $\partial$ is complete.
	\end{enumerate}
\end{fact}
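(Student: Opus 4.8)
The plan is to derive both statements from a single basic observation: for a fixed point $p\in X$ and $c\geq 0$, the closed $\partial$-ball $\bar B_\partial(p,c) = \{q : \partial(p,q) \le c\}$ is $\tau$-closed. This is immediate from lower semi-continuity, since $\bar B_\partial(p,c)$ is the preimage of the $(\tau\times\tau)$-closed set $\{(p',q) : \partial(p',q)\le c\}$ under the $\tau$-continuous map $q \mapsto (p,q)$. Throughout I will avoid assuming $(X,\tau)$ is metrizable or sequential, so compactness will be used in the form of the finite intersection property and the existence of $\tau$-cluster points of sequences, rather than sequential compactness.

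For part (1), writing $\tau_\partial$ for the topology induced by $\partial$, to show $\tau_\partial \supseteq \tau$ it suffices to produce, for every $\tau$-open $U$ and every $p\in U$, some $r>0$ with $B_\partial(p,r)\subseteq U$. Set $C = X\setminus U$, a $\tau$-compact set, and consider the decreasing chain of $\tau$-closed subsets $C \cap \bar B_\partial(p, 1/n)$. Their intersection is $C \cap \{q : \partial(p,q) = 0\} = C \cap \{p\} = \emptyset$, using that $\partial$ is a genuine metric and $p \notin C$. By the finite intersection property, some finite stage is already empty, i.e.\ $\bar B_\partial(p, 1/n) \subseteq U$ for some $n$, which yields the desired ball.

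For part (2), let $(x_n)$ be a $\partial$-Cauchy sequence. Since $(X,\tau)$ is compact, the decreasing nonempty $\tau$-closed sets $\overline{\{x_m : m \ge N\}}^{\tau}$ have nonempty intersection, so the sequence admits a $\tau$-cluster point $x$ lying in $\overline{\{x_m : m \ge N\}}^{\tau}$ for every $N$. Given $\epsilon > 0$, choose $N$ with $\partial(x_m, x_n) \le \epsilon$ for all $m,n \ge N$. For each fixed $n \ge N$, the tail $\{x_m : m \ge N\}$ is contained in the $\tau$-closed ball $\bar B_\partial(x_n, \epsilon)$, hence so is its $\tau$-closure, and therefore $x \in \bar B_\partial(x_n, \epsilon)$, that is, $\partial(x_n, x) \le \epsilon$. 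As $n \ge N$ was arbitrary, $x_n \to x$ in $\partial$, which proves completeness.

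The only genuine obstacle is resisting the temptation to argue sequentially: because $\tau$ need not be metrizable, one cannot in general extract a $\tau$-convergent subsequence, and must instead pass to the cluster point guaranteed by compactness and transport the metric estimate to it through the $\tau$-closedness of the balls. Once the closed-ball observation is in place, both parts reduce to elementary point-set topology.
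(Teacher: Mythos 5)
Your proof is correct, but there is nothing in the paper to compare it against: the statement is labeled a Fact and is part of the background material the paper explicitly defers to the literature (Ben Yaacov's paper on Lipschitz functions on topometric spaces and Ben Yaacov--Melleray's paper on grey subsets of Polish spaces), so the paper contains no proof of it. Your argument is the standard one from those sources: the single observation that closed $\partial$-balls $\{q : \partial(p,q)\le c\}$ are $\tau$-closed (by lower semi-continuity), combined with the finite intersection property for part (1) and with a $\tau$-cluster point of a $\partial$-Cauchy sequence for part (2). Both steps are carried out correctly, and you are right to insist on cluster points rather than convergent subsequences, since $(X,\tau)$ is not assumed metrizable and sequential compactness is unavailable.
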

\vspace{0 mm}

\begin{rem}
	\label{Rem:GenTopo}
	One can also define topometric spaces where the underlying topological space is not compact. One then includes item $(1)$ above in the definition.
\end{rem}
\vspace{2 mm}

The following fact will be needed going forward.
\vspace{2 mm}

\begin{fact}[Ben Yaacov \cite{BY}]
	\label{Fact:ExistLipschitz}
	Let $(X, \tau, \partial)$ be a compact topometric space. Then if $K, L\subseteq X$ are closed with $\partial(K, L) > r$, then there is a continuous, $1$-Lipschitz function $f\colon X\to [0,1]$ with $f[K] = \{0\}$ and $f[L] = \{r\}$. 
\end{fact}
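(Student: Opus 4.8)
The plan is to run a dyadic Urysohn-type construction that simultaneously respects $\tau$ and $\partial$. First I would record the natural candidates and why they fall short: both $h(x)=\min(r,\partial(x,K))$ and $g(x)=\max(0,r-\partial(x,L))$ are $1$-Lipschitz, satisfy $g\le h$ (by the triangle inequality together with $\partial(K,L)>r$, which gives $\partial(x,K)+\partial(x,L)\ge\partial(K,L)>r$), and already take the prescribed boundary values $0$ on $K$ and $r$ on $L$. So it would suffice to insert a $\tau$-continuous $1$-Lipschitz function between $g$ and $h$. The difficulty is precisely $\tau$-continuity: under mere lower semicontinuity of $\partial$, the distance function $x\mapsto\partial(x,C)$ to a $\tau$-closed $C$ is $\tau$-lower semicontinuous but in general not $\tau$-continuous (already for the discrete $\partial$ on a non-discrete $X$), so $h$ is only l.s.c. and $g$ only u.s.c.

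The key tool I would establish first is that lower semicontinuity upgrades $\tau$-separation to a $\partial$-gap: for every $\tau$-closed $C$ and every $t\ge 0$, the closed thickening $C^{+t}:=\{x:\partial(x,C)\le t\}$ is again $\tau$-closed. Indeed, since $\partial(x,\cdot)$ is l.s.c. on the compact set $C$ the infimum is attained, so $C^{+t}$ is the image of the $(\tau\times\tau)$-closed set $\{(x,z):\partial(x,z)\le t\}\cap(X\times C)$ under the projection $X\times C\to X$, which is a closed map because $C$ is compact. With this in hand I would build, by induction on the dyadic level, $\tau$-open sets $V_s$ indexed by dyadic $s\in[0,r]$, increasing in $s$, with $K\subseteq V_0$ and $L\cap V_s=\varnothing$ for $s<r$, subject to two invariants for all $s<t$: (continuity) $\overline{V_s}\subseteq V_t$, and (Lipschitz) $\partial(\overline{V_s},X\setminus V_t)\ge t-s$. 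Granting these, $f(x):=\inf\{s:x\in V_s\}$ is $\tau$-continuous by the usual Urysohn argument from the closure-nesting, and it is $1$-Lipschitz: if $f(x)=\alpha<\beta=f(y)$ then for dyadic $\alpha<s<t<\beta$ one has $x\in\overline{V_s}$ and $y\in X\setminus V_t$, whence $\partial(x,y)\ge t-s$, and letting $s\downarrow\alpha$, $t\uparrow\beta$ gives $\partial(x,y)\ge\beta-\alpha$. The boundary conditions $f[K]=\{0\}$ and $f[L]=\{r\}$ are immediate from $K\subseteq V_0$ and $L\cap V_s=\varnothing$ for $s<r$.

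The heart of the construction is the inductive step. Given $V_a\subseteq V_b$ satisfying the invariants, I would insert $V_c$ for $c=(a+b)/2$ by separating, via normality of the compact Hausdorff space $(X,\tau)$, the two $\tau$-closed thickenings $P^{+\eta}$ and $Q^{+\eta}$, where $P=\overline{V_a}$, $Q=X\setminus V_b$, and $\eta$ is chosen slightly larger than $(b-a)/2$; these are $\tau$-closed by the thickening lemma, and disjoint as soon as $\partial(P,Q)>2\eta$. Normality then yields $\tau$-open $V_c$ with $P^{+\eta}\subseteq V_c\subseteq\overline{V_c}\subseteq X\setminus Q^{+\eta}$, and one checks directly that $P^{+\eta}\subseteq V_c$ forces $\partial(\overline{V_a},X\setminus V_c)\ge\eta$, while $\overline{V_c}\subseteq\{\,\partial(\cdot,Q)>\eta\,\}$ forces $\partial(\overline{V_c},X\setminus V_b)\ge\eta$, reinstating both invariants for the two child intervals.

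I expect the main obstacle to be the metric bookkeeping, because exact bisection is tight: with $\eta=(b-a)/2$ the two child gaps would exactly consume the parent separation $b-a$, so at the next level the thickenings would be merely tangent rather than disjoint and normality could not be applied. To get around this I would carry a strictly positive slack along the induction, maintaining $\partial(\overline{V_s},X\setminus V_t)\ge(t-s)+\sigma_n$ for intervals of dyadic level $n$, and take the separation radius $\eta=(b-a)/2+\sigma_{n+1}$, so that a parent's slack strictly dominates the sum of its children's whenever $2\sigma_{n+1}<\sigma_n$ (e.g. $\sigma_n=\sigma_0\,3^{-n}$). This surplus is funded at the base by the \emph{strict} hypothesis $\partial(K,L)>r$: I would seed the level-$0$ data from $K$ and $L$ with $\sigma_0:=\partial(K,L)-r>0$. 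Since the slacks are positive and summable, every thickening pair stays strictly disjoint and the construction never stalls; the resulting $f$ is $\tau$-continuous, $1$-Lipschitz, and has the required boundary values. The conceptual crux throughout is the thickening lemma, which is the single place where lower semicontinuity of $\partial$ is used to convert the purely topological separation provided by normality into the metric gap demanded by the Lipschitz condition.
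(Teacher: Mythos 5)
The paper itself offers no proof of this statement---it is quoted as a black box from Ben Yaacov \cite{BY}---so your argument has to stand on its own. Your thickening lemma is correct and is indeed the right tool: for $\tau$-closed $C$ the set $C^{+t}=\{x:\partial(x,C)\le t\}$ is the projection along the compact factor of a closed subset of $X\times C$ (using that the l.s.c.\ function $z\mapsto\partial(x,z)$ attains its infimum on $C$), hence $\tau$-closed. The problem is in the inductive step. Your Lipschitz invariant is stated, and genuinely needed, for \emph{all} dyadic pairs $s<t$: your own verification that $f$ is $1$-Lipschitz quantifies over arbitrary dyadics $\alpha<s<t<\beta$, which are typically not endpoints of a single dyadic interval. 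But the midpoint insertion only reinstates the invariant ``for the two child intervals,'' i.e.\ for adjacent pairs, and adjacent-pair bounds do not imply the all-pairs bound: lower bounds on $\partial$ across nested annuli combine by taking a maximum, not a sum. Concretely, after inserting $V_{r/2}$, $V_{r/4}$, $V_{3r/4}$, the construction constrains $X\setminus V_{3r/4}$ only to lie at distance $>\eta'\approx r/4$ from $\overline{V_{r/2}}$; since $\overline{V_{r/4}}\subseteq\overline{V_{r/2}}$, this yields only $\partial\bigl(\overline{V_{r/4}},X\setminus V_{3r/4}\bigr)\ge r/4+\sigma_2$, whereas $1$-Lipschitzness of $f$ requires $\ge r/2$. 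Nothing in the construction forbids a point $y\notin V_{3r/4}$ (so $f(y)\ge 3r/4$) at distance barely above $r/4$ from a point $x\in V_{r/4}$ (so $f(x)\le r/4$), and then $f$ fails to be $1$-Lipschitz. This is exactly the place where the metric problem is harder than classical Urysohn, where adjacency suffices because containment of closures propagates transitively for free.

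The gap is repairable within your framework, but the inductive step must be strengthened: insert the sets $V_{q_1},V_{q_2},\dots$ one at a time (enumerating the dyadics), and when inserting $V_c$ separate, by normality, the finite union $\bigcup_{s<c}\overline{V_s}^{+\left((c-s)+\sigma_{m+1}\right)}$ from the finite union $\bigcup_{t>c}\left(X\setminus V_t\right)^{+\left((t-c)+\sigma_{m+1}\right)}$, where $s,t$ range over \emph{all} previously handled dyadics. Each union is $\tau$-closed by your thickening lemma, and their disjointness follows from the previously maintained all-pairs invariant with slack $\sigma_m>2\sigma_{m+1}$, by precisely your two-set computation; the resulting $V_c$ then satisfies the all-pairs invariant with slack $\sigma_{m+1}$ against every earlier index, which is what the Lipschitz estimate for $f$ actually uses. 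One further small repair: the seed slack cannot be taken to be exactly $\sigma_0=\partial(K,L)-r$. By compactness and lower semicontinuity the distance $\partial(K,L)$ is attained, so $K$ meets $L^{+\partial(K,L)}$ and cannot be separated from it; you must choose $0<\sigma_0<\partial(K,L)-r$ (say half that value), separating $K$ from the $\tau$-closed set $L^{+(r+\sigma_0)}$, which is disjoint from $K$ precisely because $r+\sigma_0<\partial(K,L)$.
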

\vspace{2 mm}

If $(X, \tau, \partial)$ is a compact topometric space, $K\subseteq X$, and $c> 0$, we define $K(c) := \{p\in X: \partial(p, K) < c\}$ and $K[c] := \{p\in X: \partial(p, K) \leq c\}$. If $K = \{p\}$ for some $p\in X$, we just write $p(c)$ or $p[c]$, respectively.
\vspace{2 mm}

\begin{defin}[\cite{BYM}, Def. 1.25]
	\label{Def:Adequate}
	A topometric space $(X, \tau, \partial)$ is called \emph{adequate} if for every open $A\subseteq X$ and every $c > 0$, we have $A(c)$ open.
\end{defin}
\vspace{2 mm}

We will prove (see Theorem~\ref{Thm:Adequate}) that the topometric spaces we consider in this paper are all adequate.
\vspace{2 mm}

\section{Maximally highly proximal flows}

Throughout this section, $G$ will denote a fixed Polish group. We let $d_G$ denote a compatible left-invariant metric of diameter $1$, and for $c> 0$, we set $U_c := \{g\in G: d_G(1_G, g) < c\}$. We will frequently and without explicit mention make use of the inclusion $U_cU_\epsilon \subseteq U_{c+\epsilon}$.
\vspace{2 mm}

\begin{defin}
	\label{Def:MHP}
	Let $X$ be a $G$-flow. We say that $X$ is \emph{maximally highly proximal}, or \emph{MHP}, if for every $A\subseteq_{op} X$, every $x\in \overline{A}$, and every $c > 0$, we have $x\in \mathrm{int}(\overline{AU_c})$.
\end{defin}	

\subsection{Highly proximal extensions}

The name MHP comes from the notion of a \emph{highly proximal} extension. If $\phi\colon Y\to X$ is a surjective $G$-map, we define the \emph{fiber image} of $B\subseteq_{op} Y$ to be $\phi_{fib}(B) := \{x\in X: \phi^{-1}(\{x\})\subseteq B\}$. The set $\phi_{fib}(B)$ is always open whenever $B\subseteq_{op} Y$, but possibly empty. We call $\phi$  \emph{highly proximal} if $\phi_{fib}(B)\neq \emptyset$ for every $B\subseteq_{op} Y$. The composition of highly proximal maps is also highly proximal. Also notice that if $X$ is minimal and $\phi\colon Y\to X$ is highly proximal, then $Y$ is also minimal. More precisely, if $X$ is any $G$-flow and $x\in X$ has dense orbit, then if $\phi\colon Y\to X$ is any highly proximal extension, then any $y\in \phi^{-1}(\{x\})$ also has dense orbit. 

To motivate why this notion receives the name ``highly proximal,'' it is helpful to compare this to the notion of a proximal extension. A $G$-map $\phi\colon Y\to X$ is called \emph{proximal} if for any $y_0, y_1\in Y$ with $\phi(y_0) = \phi(y_1)$, we can find a net $g_i$ from $G$ and $z\in Y$ with $\lim y_0g_i = \lim y_1g_i = z$. Now suppose that $X$ is minimal and that $\phi\colon Y\to X$ is highly proximal. Then $\phi$ is proximal. To see this, let $y_0, y_1\in Y$ with $\phi(y_0) = \phi(y_1) = x$. Fix any $z\in Y$, and let $\{B_i: i\in I\}$ be a base of neighborhoods of $z$. For each $B_i$, we have $\phi_{fib}(B_i):= A_i\neq \emptyset$. By minimality, let $g_i\in G$ be such that $xg_i\in A_i$. Then we see that $\lim yg_i = z$ for any $y\in \phi^{-1}(\{x\})$, so in particular for $y_0$ and $y_1$. In fact, this is historically the definition of a highly proximal extension.
\vspace{2 mm}

\begin{fact}[\cite{AG}, p.\ 733]
	\label{Fact:HPOriginal}
	Let $X$ be a minimal flow. Then the extension $\phi\colon Y\to X$ is highly proximal iff for any $x\in X$, there is a net $g_i\in G$ and a point $y\in Y$ with $\phi^{-1}(\{xg_i\})\to \{y\}$, i.e. for any $B\ni_{op} y$, we eventually have $\phi^{-1}(xg_i)\subseteq B_i$.
\end{fact}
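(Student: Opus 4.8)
The plan is to prove the two directions separately, in both exploiting the equivariance of the fiber-image operation, namely that $\phi_{fib}(Bg) = \phi_{fib}(B)g$ for every $B\subseteq_{op} Y$ and every $g\in G$. This identity is formal: it follows from the $G$-equivariance of $\phi$, since $\phi^{-1}(\{x\})\subseteq Bg$ holds iff $\phi^{-1}(\{xg^{-1}\})\subseteq B$. I would record it first, as it is the workhorse of the reverse implication.

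For the forward direction, assume $\phi$ is highly proximal and fix $x\in X$. Choose any point $y\in Y$ and let $\{B_i : i\in I\}$ be a neighborhood base at $y$, directed by reverse inclusion. High proximality gives $\phi_{fib}(B_i)\subseteq_{op} X$ for each $i$, and minimality of $X$ lets me pick $g_i\in G$ with $xg_i\in \phi_{fib}(B_i)$, i.e.\ $\phi^{-1}(\{xg_i\})\subseteq B_i$. Then $(g_i)$ is the desired net: given any $B\ni_{op} y$, choose $i_0$ with $B_{i_0}\subseteq B$, so that for all $i\geq i_0$ we have $\phi^{-1}(\{xg_i\})\subseteq B_i\subseteq B$, which is exactly the asserted convergence $\phi^{-1}(\{xg_i\})\to\{y\}$.

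For the reverse direction, assume the net condition and fix $B\subseteq_{op} Y$; I must produce a full fiber contained in $B$. Applying the hypothesis to some $x$, I obtain a net $(g_i)$ and a point $y$ with $\phi^{-1}(\{xg_i\})\to\{y\}$. Unwinding the convergence shows that for every $B'\ni_{op} y$ one eventually has $\phi^{-1}(\{xg_i\})\subseteq B'$, i.e.\ $xg_i\in \phi_{fib}(B')$; in particular every neighborhood of $y$ has nonempty fiber-image. To transfer this from neighborhoods of the single point $y$ to the given set $B$, I would use that $y$ has dense orbit: pick $g\in G$ with $yg\in B$, so that $Bg^{-1}\ni_{op} y$ and hence $\phi_{fib}(Bg^{-1})\neq\emptyset$; the equivariance identity gives $\phi_{fib}(Bg^{-1}) = \phi_{fib}(B)g^{-1}$, whence $\phi_{fib}(B)\neq\emptyset$, as required.

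The main obstacle is precisely this last transfer step. Extracting ``every neighborhood of $y$ has nonempty fiber-image'' from the set-convergence is routine, and the equivariance identity is purely formal; the genuine content is that one may reposition $B$ by the group action so as to become a neighborhood of the collapse point $y$, which requires the orbit of $y$ to be dense. In the minimal-extension setting of \cite{AG} the total space $Y$ is minimal, making this immediate, so I would state explicitly that we work with $Y$ minimal (consistent with the forward direction, where high proximality already forces $Y$ minimal). This is essential: without density of some orbit one cannot rule out an open set that meets every full fiber only partially, so the net condition alone would not recover high proximality.
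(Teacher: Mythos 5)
Your proof is correct, and it is worth noting that the paper itself gives no proof of this Fact: it is imported from Auslander--Glasner \cite{AG}. The closest thing to a proof in the paper is the paragraph immediately preceding the Fact, where, in showing that a highly proximal extension of a minimal flow is proximal, the author runs exactly your forward-direction construction (a neighborhood base $\{B_i\}$ at a point, the nonempty open fiber images $\phi_{fib}(B_i)$, and minimality of $X$ to choose $g_i$ with $xg_i\in\phi_{fib}(B_i)$); so that half of your argument coincides with the paper's own sketch. Your reverse direction, built on the equivariance identity $\phi_{fib}(Bg)=\phi_{fib}(B)g$, is also correct, and your insistence that $Y$ be minimal there is not a cosmetic hypothesis but a genuine repair of the statement as literally written: without it the equivalence fails. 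For instance, let $G=\mathbb{Z}$ act on $X=\mathbb{R}/\mathbb{Z}$ by an irrational rotation $\alpha$, and let $Y\subseteq X\times[0,1]$ consist of $X\times\{0\}$ together with the isolated points $(x_0+n\alpha,\,2^{-|n|})$ for $n\in\mathbb{Z}$, with the evident $\mathbb{Z}$-action and $\phi$ the first-coordinate projection. Each fiber has at most two points whose vertical separation tends to $0$ along the orbit, so for every $x$ the fibers $\phi^{-1}(\{xg_i\})$ shrink to a single point of $X\times\{0\}$ for a suitable net $g_i\to\infty$, i.e.\ the net condition holds; yet $\phi_{fib}\bigl(Y\setminus(X\times\{0\})\bigr)=\emptyset$ because every fiber meets the proper subflow $X\times\{0\}$, so $\phi$ is not highly proximal. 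This is exactly the failure mode you predicted (an open set meeting every fiber only partially), and your reading is the intended one: in \cite{AG} extensions are maps between minimal flows, and, as the paper notes, high proximality together with minimality of $X$ forces $Y$ to be minimal, so both sides of the equivalence implicitly live in that setting.
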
 
\vspace{2 mm}

In the case that $X$ is a minimal flow, Auslander and Glasner \cite{AG} prove the existence and uniqueness of a \emph{universal highly proximal extension}; this is a highly proximal $G$-map $\pi_X\colon S_G(X)\to X$ so that for any other highly proximal $\phi\colon Y\to X$, there is a $G$-map $\psi\colon S_G(X)\to Y$ with $\pi_X = \phi\circ \psi$.
\vspace{0 mm} 

\begin{center}
\begin{tikzcd}
	S_G(X)\arrow[d, "\pi_X"] \arrow[rd, dashrightarrow, "\psi"]\\
	X & Y\arrow[l, "\phi"]
\end{tikzcd}
\end{center}

Such a $\psi$ is necessarily also highly proximal.

The notion of a universal highly proximal extension was generalized to any $G$-flow in \cite{ZucProx}, where an explicit construction is given. We briefly review this construction here, referring to \cite{ZucProx} for all proofs. 
\vspace{2 mm}

\begin{defin}
	\label{Def:NearUlt}
	Fix a $G$-flow $X$, and write $\mathrm{op}(X) := \{A: A\subseteq_{op} X\}$. A collection $p\subseteq \mathrm{op}(X)$ is called a \emph{near ultrafilter} if: 
	\vspace{-2 mm}
	
	\begin{enumerate}
		\item 
		For every $k < \omega$, $A_0,...,A_{k-1}\in p$, and $c > 0$, we have $\bigcap_{i < k} A_iU_c \neq \emptyset$. We call this property the \emph{Near Finite Intersection Property}, or NFIP.
		\item 
		$p$ is maximal with respect to satisfying item (1).
	\end{enumerate}
\end{defin}
\vspace{2 mm}

Let $S_G(X)$ denote the collection of near ultrafilters on $\mathrm{op}(X)$. For $A\subseteq_{op} X$, we set $C_A = \{p\in S_G(X): A\in p\}$ and $N_A = \{p\in S_G(X): A\not\in p\}$. We endow $S_G(X)$ with a compact Hausdorff topology given by the base $\{N_A := A\subseteq_{op} X\}$. For $p\in S_G(X)$, a base of (not necessarily open) neighborhoods of $p$ is given by $\{C_{AU_\epsilon}: A\in p, \epsilon > 0\}$. The group $G$ acts on $S_G(X)$ in the obvious way, where $A\in pg$ iff $Ag^{-1}\in p$. We also have a canonical $G$-map $\pi_X\colon S_G(X)\to X$, where $\pi_X(p) = x$ iff for every $A\ni_{op} x$, we have $A\in p$.
\vspace{2 mm}

\begin{fact}
	\label{Fact:UnivHP}
	$\pi_X\colon S_G(X)\to X$ is the universal highly proximal extension of $X$.
\end{fact}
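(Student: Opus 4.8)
The plan is to unpack the universal property into four verifications: that $S_G(X)$ is a compact Hausdorff $G$-flow, that $\pi_X$ is a well-defined surjective $G$-map, that $\pi_X$ is highly proximal, and that it is universal among highly proximal extensions of $X$. Throughout, the two workhorses are the maximality of near ultrafilters and the interaction of the fiber-image operation with the fattenings $U_c$; in particular, for a surjective $G$-map $\phi\colon Y\to X$ one checks directly that $\phi_{fib}$ commutes with finite intersections and that $\phi_{fib}(B)U_c\subseteq \phi_{fib}(BU_c)$, using only that $\phi$ is equivariant and that fibers are nonempty.

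For the flow structure, I would first note that by Zorn's Lemma every family with the NFIP extends to a near ultrafilter, since the NFIP is a property of finite subfamilies and is therefore preserved under unions of chains. Compactness of $S_G(X)$ then follows from Alexander's subbase theorem applied to the subbase $\{N_A : A\subseteq_{op}X\}$: a family $\{N_{A_i}\}$ with no finite subcover is exactly a family $\{A_i\}$ every finite subfamily of which lies in some near ultrafilter, hence a family with the NFIP, which extends to a near ultrafilter omitting no $A_i$. Hausdorffness and the identification of $\{C_{AU_\epsilon} : A\in p,\ \epsilon>0\}$ as a neighborhood base at $p$ are the content of the construction in \cite{ZucProx}. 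For $\pi_X$, the NFIP forces $\{\overline A : A\in p\}$ to have the finite intersection property (let $c\to 0$ in $\bigcap_i A_iU_c\neq\emptyset$ and use compactness of $X$), so $\bigcap_{A\in p}\overline A\neq\emptyset$; maximality of $p$ pins this intersection down to the single point $\pi_X(p)$. High proximality of $\pi_X$ I would verify from the definition (rather than from Fact~\ref{Fact:HPOriginal}, since $X$ need not be minimal): a near ultrafilter over a point $x$ can contain $A$ only if $x$ lies in the near-closure $\bigcap_{c>0}\overline{AU_c}$, so any point outside this near-closure has its entire $\pi_X$-fiber inside $N_A$, and maximality guarantees such points exist whenever $N_A\neq\emptyset$.

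The main obstacle is universality. Given a highly proximal $\phi\colon Y\to X$, I would define $\psi\colon S_G(X)\to Y$ by sending $p$ to the unique point of $\bigcap\{\overline B : B\subseteq_{op}Y,\ \phi_{fib}(B)\in p\}$. That this intersection is nonempty is where high proximality of $\phi$ enters: for $B_0,B_1$ in the defining family one has $\phi_{fib}(B_0)U_c\cap\phi_{fib}(B_1)U_c\subseteq\phi_{fib}(B_0U_c\cap B_1U_c)$, which is nonempty by the NFIP of $p$, and since a nonempty fiber image witnesses a nonempty set, $B_0U_c\cap B_1U_c\neq\emptyset$ for all $c$; letting $c\to0$ and using compactness of $Y$ gives the finite intersection property of $\{\overline B\}$, hence a nonempty intersection. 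The genuinely delicate point is that this intersection is a \emph{single} point: this is exactly where the maximality of $p$ must be combined with the high proximality of $\phi$, for if two distinct points survived one could separate them by disjoint open sets and contradict maximality of $p$ via their fiber images.

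Once $\psi$ is well-defined, the remaining checks are routine. The factorization $\phi\circ\psi=\pi_X$ follows by testing against $B=\phi^{-1}(A)$ for $A\ni_{op}\pi_X(p)$: one computes $\phi_{fib}(\phi^{-1}(A))=A\in p$, so $\psi(p)\in\overline{\phi^{-1}(A)}$ and hence $\phi(\psi(p))\in\overline A$; intersecting over neighborhoods $A$ of $\pi_X(p)$ forces $\phi(\psi(p))=\pi_X(p)$ by Hausdorffness. Equivariance $\psi(pg)=\psi(p)g$ comes from $\phi_{fib}(Bg^{-1})\in p\iff\phi_{fib}(B)\in pg$ together with equivariance of $\phi$, and continuity of $\psi$ is immediate from the definition of the topology on $S_G(X)$ and the neighborhood base at $\psi(p)$. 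Uniqueness of $\psi$ is forced by $\phi\circ\psi=\pi_X$ together with the fact, noted above, that any such $\psi$ is itself highly proximal. I expect essentially all of the difficulty to be concentrated in the single-point argument of the previous paragraph; everything else is bookkeeping with the two structural facts about $\phi_{fib}$ and $U_c$.
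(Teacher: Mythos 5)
First, a point of order: the paper does not prove Fact~\ref{Fact:UnivHP} at all --- it is quoted from \cite{ZucProx} (``referring to \cite{ZucProx} for all proofs''), so there is no internal argument to compare against; what you have written is a reconstruction of that external proof. Judged on its own terms, your architecture is the right one, and most of the individual steps check out: Zorn plus Alexander's subbase theorem for compactness, the identity $\phi_{fib}(B_0)U_c\cap\phi_{fib}(B_1)U_c\subseteq\phi_{fib}(B_0U_c\cap B_1U_c)$, the $c\to 0$ compactness arguments (which correctly need joint continuity of the action to conclude $b_c = y_c g_c^{-1}\to y$), the verification that $\pi_X$ is highly proximal, and the factorization, equivariance, and continuity bookkeeping for $\psi$.

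The genuine gap is in both of your ``single point by maximality'' steps, and it is the same gap twice. Note first that what your later steps use is not that $\bigcap_{A\in p}\overline{A}$ is a singleton, but that its unique point $x$ satisfies $\{A : A\ni_{op} x\}\subseteq p$ --- this is the paper's \emph{definition} of $\pi_X(p)=x$, and you invoke it when you write $\phi_{fib}(\phi^{-1}(A))=A\in p$ for $A\ni_{op}\pi_X(p)$. Your compactness argument produces a point with the \emph{reverse} inclusion $p\subseteq\{A : x\in\overline{A}\}$, and ``maximality'' does not convert one into the other by itself: to put a neighborhood $B$ of $x$ into $p$ you must verify $BU_{c}\cap\bigcap_{i<k}A_iU_{c}\neq\emptyset$ for $A_0,\dots,A_{k-1}\in p$, and knowing $x\in\overline{A_i}$ for each $i$ \emph{separately} produces no common point, so the verification stalls. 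The missing lemma is that near ultrafilters are closed under fattened finite intersections: for $A_0,\dots,A_{k-1}\in p$ and $c>0$ one has $\bigcap_{i<k}A_iU_c\in p$ (adding it preserves NFIP because $(\bigcap_i A_iU_c)U_{c'}\cap\bigcap_j A'_jU_{c'}\supseteq \bigcap_{i,j}(A_i\cap\text{-}\mathrm{family})U_{\min(c,c')}$ is nonempty by the NFIP of $p$, then apply maximality), and hence every open superset of such a set is in $p$. With this lemma your plan closes immediately: any $x\in\bigcap_{A\in p}\overline{A}$ lies in particular in $\overline{\bigcap_i A_iU_c}$, so every $B\ni_{op}x$ meets every $\bigcap_i A_iU_c$, so $p\cup\{B\}$ has NFIP and $B\in p$; uniqueness then follows by separating two candidate points by $B, B'$ with $BU_c\cap B'U_c=\emptyset$. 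The same lemma is what rescues your ``genuinely delicate'' uniqueness step for $\psi$: it shows each $\phi^{-1}(\bigcap_i A_iU_c)$ lies in your defining family (its fiber image is $\bigcap_i A_iU_c\in p$), so any neighborhood $V_j$ of a surviving point $y_j$ meets each $\phi^{-1}(\bigcap_i A_iU_c)$; then high proximality, used in the concrete form $\phi_{fib}(V\cap\phi^{-1}(D))=\phi_{fib}(V)\cap D\neq\emptyset$ whenever $V\cap\phi^{-1}(D)\neq\emptyset$, plus maximality of $p$ yields $\phi_{fib}(V_jU_c)\in p$; finally, choosing $V_0,V_1$ with $V_0U_{2c}\cap V_1U_{2c}=\emptyset$ contradicts the NFIP of $p$, since $\phi_{fib}(V_0U_c)U_c\cap\phi_{fib}(V_1U_c)U_c\subseteq\phi_{fib}(V_0U_{2c}\cap V_1U_{2c})=\emptyset$. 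So your strategy is completable, but as written the two pivotal maximality invocations are gestures whose enabling mechanism --- the fattened-intersection lemma --- is absent, and without it the steps as stated do not go through.
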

\vspace{2 mm}

In particular, the map $\pi_{S_G(X)}\colon S_G(S_G(X))\to S_G(X)$ is an isomorphism. The construction of the space of near ultrafilters in fact works on any \emph{$G$-space}, where the underlying space $X$ need not be compact. While in this generality we do not get the map $\pi_X$, we will still refer to the universal highly proximal extension of the $G$-space $X$, and the construction will still be idempotent. A remark that will be useful later is that if $Y\subseteq X$ is a dense $G$-invariant subspace of a $G$-space $X$, then $S_G(X)$ and $S_G(Y)$ coincide.
\vspace{2 mm}

\begin{prop}
	\label{prop:MHP}
	The $G$-flow $X$ is MHP iff the universal highly proximal extension \newline $\pi_X\colon S_G(X)\to X$ is an isomorphism.
\end{prop}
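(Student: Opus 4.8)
The plan is to reduce the statement to injectivity of $\pi_X$ and then analyze the fibers of $\pi_X$ directly in terms of the MHP condition. Since $\pi_X\colon S_G(X)\to X$ is a continuous surjective $G$-map between compact Hausdorff spaces, it is an isomorphism of $G$-flows precisely when it is injective, i.e.\ when every fiber $\pi_X^{-1}(\{x\})$ is a singleton. By the definition of $\pi_X$, a near ultrafilter $p$ lies in $\pi_X^{-1}(\{x\})$ exactly when $p$ contains every open neighborhood of $x$; write $\mathcal{N}_x$ for the filter of open neighborhoods of $x$. Thus the proposition becomes: $X$ is MHP iff for every $x\in X$ there is a unique near ultrafilter extending $\mathcal{N}_x$. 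Throughout I will use freely the two elementary facts that $\overline{B}\,U_\epsilon\subseteq \overline{BU_\epsilon}$ (immediate from continuity of the action) and that $U_\epsilon$ is symmetric, so that with $U_{c/2}U_{c/2}\subseteq U_c$ one gets $gh^{-1}\in U_c$ whenever $g,h\in U_{c/2}$.

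For the direction that injectivity forces MHP, I would argue the contrapositive. If MHP fails, fix $A\subseteq_{op}X$, a point $x\in\overline{A}$, and $c>0$ with $x\notin\mathrm{int}(\overline{AU_c})$; then $B:=X\setminus\overline{AU_c}$ is open with $x\in\overline{B}$. Using $x\in\overline{A}$ one checks that $\mathcal{N}_x\cup\{A\}$ has the NFIP, and likewise $x\in\overline{B}$ gives that $\mathcal{N}_x\cup\{B\}$ has the NFIP; a standard Zorn's lemma argument extends these to near ultrafilters $p\ni A$ and $q\ni B$, both lying over $x$. These are distinct: if $A$ and $B$ both belonged to a single near ultrafilter, the NFIP would give $AU_{c/2}\cap BU_{c/2}\neq\emptyset$, but the inclusion $gh^{-1}\in U_c$ shows any such intersection point forces a member of $B$ to lie in $AU_c$, contradicting $B=X\setminus\overline{AU_c}$. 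Two distinct points in one fiber contradict injectivity.

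For the converse, suppose $X$ is MHP and let $p,q$ be near ultrafilters with $\pi_X(p)=\pi_X(q)=x$; by symmetry it suffices to show $p\subseteq q$. Fix $A\in p$; since $q$ is maximal, $A\in q$ will follow once I show $q\cup\{A\}$ has the NFIP, i.e.\ that $AU_c\cap\bigcap_{i<k}B_iU_c\neq\emptyset$ for all $B_0,\dots,B_{k-1}\in q$ and $c>0$. The first key step extracts analytic content from ``$A\in p$ and $\pi_X(p)=x$'': intersecting $A$ with neighborhoods of $x$ through the NFIP of $p$ yields $x\in\overline{AU_\delta}$ for every $\delta>0$. Now I apply MHP to the open set $AU_\delta$ at scale $\delta$ to conclude $x\in\mathrm{int}(\overline{AU_{2\delta}})$, so that $V_\delta:=\mathrm{int}(\overline{AU_{2\delta}})$ is a genuine open neighborhood of $x$ and hence $V_\delta\in q$. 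Feeding $V_\delta,B_0,\dots,B_{k-1}$ into the NFIP of $q$ at a small scale $\eta$ produces a point in $V_\delta U_\eta\cap\bigcap_i B_iU_\eta$, and the inclusions $V_\delta U_\eta\subseteq \overline{AU_{2\delta}}\,U_\eta\subseteq\overline{AU_{2\delta+\eta}}$ place it in the closure of $AU_{2\delta+\eta}$.

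The main obstacle is the final ``closure-to-point'' conversion in this last step: the NFIP of $q$ only locates points inside a closure $\overline{AU_{2\delta+\eta}}$, whereas the NFIP for $q\cup\{A\}$ demands an honest element of $AU_c$. I would resolve this by noting that $\bigcap_i B_iU_\eta$ is open, so any point of $\overline{AU_{2\delta+\eta}}$ lying in it has that open set as a neighborhood, which must therefore meet $AU_{2\delta+\eta}$ itself; choosing $\delta,\eta$ small enough that $2\delta+\eta\le c$ and $\eta\le c$ then lands a genuine point in $AU_c\cap\bigcap_i B_iU_c$. Careful bookkeeping of the radii, together with the two elementary inclusions noted above, is what makes the argument go through, and is the only genuinely delicate part.
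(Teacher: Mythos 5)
Your proposal is correct and follows essentially the same route as the paper: both reduce the statement to injectivity of $\pi_X$, both handle the failure-of-MHP direction by extending the neighborhood filter of a witness point $x$ together with $A$ and with $X\setminus\overline{AU_c}$ to two distinct near ultrafilters in the same fiber, and both handle the MHP direction by using MHP to convert ``$x$ lies in a closure'' into an honest open neighborhood $\mathrm{int}(\overline{AU_\epsilon})\ni x$ and then invoking maximality of near ultrafilters. The only organizational difference is that the paper identifies the whole fiber with the single canonical family $\mathcal{F}_x=\{A\subseteq_{op}X:\, x\in\overline{A}\}$, shown to have the NFIP when $X$ is MHP, whereas you prove mutual containment of two arbitrary fiber points directly; your radius bookkeeping is exactly the computation hiding inside the paper's assertion that $\mathcal{F}_x$ has the NFIP.
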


\begin{proof}
First let $X$ be any $G$-flow. Fix $p\in S_G(X)$, and set $x = \pi_X(p)$. Then we must have $p\subseteq \mathcal{F}_x := \{A\subseteq_{op} X: x\in \overline{A}\}$. To see why, if $x\not\in \overline{A}$, we can find $B\ni_{op} x$ and $c> 0$ with $AU_c\cap BU_c = \emptyset$. As $B\in p$ by definition of the map $\pi_X$, we cannot have $A\in p$.

Now suppose the $G$-flow $X$ is MHP. Then for every $x\in X$, we have that $\mathcal{F}_x$ has the NFIP, so is a near ultrafilter. It follows that if $p\in S_G(X)$ with $\pi_X(p) = x$, then we in fact have $p = \mathcal{F}_x$. In particular, the map $\pi_X$ is injective, hence an isomorphism.

Conversely, suppose $X$ is not MHP. Find some $x\in X$, $B\subseteq_{op} X$ with $x\in \overline{B}$, and $c> 0$ with $x\not\in \mathrm{int}(\overline{BU_c})$. Setting $C = X\setminus \overline{BU_c}$, we have $x\in \overline{C}$. Notice that $BU_{c/2}\cap CU_{c/2} = \emptyset$, so $B$ and $C$ can never belong to the same near ultrafilter. Set $\mathcal{G}_x := \{A\subseteq_{op} X: x\in A\}$. Let $p\in S_G(X)$ extend $\mathcal{G}_x\cup \{B\}$, and let $q\in S_G(X)$ extend $\mathcal{G}_x\cup\{C\}$. Then $p\neq q$ and $\pi_X(p) = \pi_X(q) = x$.
\end{proof}
\vspace{2 mm}

\subsection{Examples of MHP flows}
	
We now collect some examples of MHP flows. Of course, the universal highly proximal extension of any $G$-space is an MHP flow, but it will be useful to have some explicit examples in mind.

\subsubsection{Samuel compactifications}
\label{Sec:Samuel}

Let $H\subseteq G$ be a closed subgroup, and let $H\backslash G$ denote the right coset space. We equip $H\backslash G$ with the metric that it inherits from $G$, which we also denote by $d_G$. Explicitly, if $Hg\in H\backslash G$, the ball of radius $\epsilon > 0$ around $Hg$ is given by $HgU_\epsilon$. Then the \emph{Samuel compactification} $\Sa(H\backslash G)$ is the Gelfand space of the bounded uniformly continuous functions on $H\b G$. It is a $G$-flow characterized by the property that for any $G$-flow $Y$ containing a point $y\in Y$ with $y\cdot h = y_0$ for every $h\in H$, then there is a (necessarily unique) $G$-map $\phi\colon \Sa(H\backslash G)\to Y$ with $\phi(H) = y$. In the case $H = \{1_G\}$, we often write $yp := \phi(p)$. We identify $H\backslash G$ with its image under the canonical embedding $i\colon H\backslash G\hookrightarrow \Sa(H\backslash G)$.
		
To see that $\Sa(H\backslash G)$ is MHP, suppose $\psi\colon X\to \Sa(H\backslash G)$ were highly proximal. Using the universal property of $\Sa(H\backslash G)$, it is enough to show that $\psi^{-1}(\{H\})$ is a singleton. First note that for any $x\in \psi^{-1}(\{H\})$ and any $A\ni_{op} x$, we have $H\in \overline{\psi_{fib}(A)}$. In particular, since $\psi_{fib}(A)$ is open, we can for any $\epsilon > 0$ find $Hg\in (H\backslash G)\cap \psi_{fib}(A)$ with $d_G(Hg, H) < \epsilon$. Now if $x\neq y\in X$ satisfied $\psi(x) = \psi(y) = H$, we can find $A\ni_{op} x$, $B\ni_{op} y$, and $\epsilon > 0$ with $\overline{AU_\epsilon}\cap \overline{BU_\epsilon} = \emptyset$. This implies that $\overline{\psi_{fib}(A)U_\epsilon} \cap \overline{\psi_{fib}(B)U_\epsilon} = \emptyset$, a contradiction as $H$ is a member of this intersection.
		
In particular, by taking $H = \{1_G\}$, we see that $\Sa(G)$ is MHP.
We also have that $M(G)$ is MHP. There are two ways of seeing this. One is that $S_G(M(G))$ is a minimal flow mapping onto $M(G)$, so by uniqueness of $M(G)$ we have that $\pi_{M(G)}\colon S_G(M(G))\to M(G)$ is an isomorphism. The other way is to note that $M(G)$ is a retract of $\Sa(G)$ and observe that retracts of MHP flows are also MHP.

Also notice that since $H\backslash G$ is a dense $G$-invariant subspace of $\Sa(H\backslash G)$, then by the remark after Fact~\ref{Fact:UnivHP}, we have $\Sa(H\backslash G)\cong S_G(H\backslash G)$. When viewing $\Sa(H\backslash G)$ as a space of near ultrafilters, the following fact will be useful to keep in mind (see \cite{ZucThe}, Ch.\ 1).

\begin{fact}\mbox{}
	\label{Fact:SHGNults}
	If $X$ is a compact space and $f\colon H\backslash G\to X$ is a uniformly continuous function, then the unique continuous extension $f\colon \Sa(H\backslash G)\to X$ is defined by setting, for $p\in \Sa(H\backslash G)$ and $x\in X$, $f(p) = x$ iff $\{f^{-1}(U): U\ni_{op}x\}\subseteq p$. Given $p\in \Sa(H\backslash G)$, the existence of an $x\in X$ with this property is an easy consequence of compactness; the uniqueness of such an $x$ requires the uniform continuity of $f$.
\end{fact}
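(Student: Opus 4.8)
The plan is to define a map $\tilde f\colon \Sa(H\backslash G)\to X$ by the proposed formula and then to check that it is the continuous extension of $f$. Since $\Sa(H\backslash G)\cong S_G(H\backslash G)$, a point is a near ultrafilter $p$ on $\mathrm{op}(H\backslash G)$, and for the formula to define a function I must show that for each such $p$ there is exactly one $x\in X$ with $f^{-1}(U)\in p$ for every $U\ni_{op} x$. As $X$ is only assumed compact Hausdorff, I will work throughout with its unique compatible uniformity, writing $V$ for a symmetric open entourage and $V[S]=\{y: (s,y)\in V \text{ for some } s\in S\}$; uniform continuity of $f$ then reads: for every entourage $V$ there is $c>0$ with $Hg'\in HgU_c\Rightarrow (f(Hg),f(Hg'))\in V$. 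Once $\tilde f$ is well defined I must verify that it agrees with $f$ on the dense copy of $H\backslash G$ (immediate, since for a coset $Hg$ the corresponding near ultrafilter is $\{A: Hg\in\overline A\}$, and continuity of $f$ makes $f^{-1}(U)$ an open neighborhood of $Hg$ whenever $U\ni_{op} f(Hg)$, so uniqueness forces $\tilde f(Hg)=f(Hg)$) and that it is continuous; being a continuous extension of $f$, it is then the unique such by density of $H\backslash G$ in $\Sa(H\backslash G)$.

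For existence of $x$, which needs only compactness, the key is a covering property of near ultrafilters: if $A_1,\dots,A_n\subseteq_{op} H\backslash G$ cover $H\backslash G$, then some $A_i\in p$. Indeed, if every $A_i\notin p$, then by maximality each $p\cup\{A_i\}$ violates the NFIP, so there are $B^i_1,\dots,B^i_{m_i}\in p$ and $c_i>0$ with $A_iU_{c_i}\cap\bigcap_l B^i_lU_{c_i}=\emptyset$; setting $c=\min_i c_i$ and applying the NFIP to the finite family of all the $B^i_l$ at once yields a coset $Hg\in\bigcap_{i,l}B^i_lU_c$, which for each $i$ lies outside $A_iU_{c_i}\supseteq A_i$, contradicting that the $A_i$ cover. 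Granting this, consider the open sets $\{U\subseteq_{op} X: f^{-1}(U)\notin p\}$. If they covered $X$, compactness would give a finite subcover whose nonempty preimages cover $H\backslash G$ yet none lies in $p$, contradicting the covering property. Hence some $x\in X$ lies in no such $U$, which says exactly that $f^{-1}(U)\in p$ for every $U\ni_{op} x$.

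Uniqueness is where uniform continuity enters, and I expect this interplay to be the main obstacle. Suppose $x\ne x'$ both satisfy the property. Choose $U\ni_{op} x$ and $U'\ni_{op} x'$ with disjoint closures and, using compactness of $X$, a symmetric entourage $V$ with $V[\overline U]\cap V[\overline{U'}]=\emptyset$. By uniform continuity pick $c>0$ so small that $Hg'\in HgU_c$ forces $(f(Hg),f(Hg'))\in V$. Since $f^{-1}(U),f^{-1}(U')\in p$, the NFIP yields a coset $Hg\in f^{-1}(U)U_c\cap f^{-1}(U')U_c$; unwinding the two memberships through the modulus $c$ places $f(Hg)$ in both $V[\overline U]$ and $V[\overline{U'}]$, the desired contradiction. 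The same fattening-apart technique gives continuity of $\tilde f$: given $x=\tilde f(p)$ and $U\ni_{op} x$, one picks $U'\ni_{op} x$ with $\overline{U'}\subseteq U$ and an entourage separating $\overline{U'}$ from $X\setminus U$, and then shows, via uniform continuity and the NFIP, that $\tilde f$ sends the basic neighborhood $C_{f^{-1}(U')U_c}$ of $p$ into $U$ for suitable $c$. With $\tilde f$ continuous and extending $f$, it coincides with the unique continuous extension, proving the formula. The delicate point throughout is that $X$ carries no metric, so all ``closeness'' must be phrased with entourages, and uniform continuity is precisely what converts an entourage on $X$ into a radius $c$ on $H\backslash G$ compatible with the NFIP.
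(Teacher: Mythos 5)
Your proof is correct, and it follows exactly the division of labor that the paper prescribes: the paper does not actually prove this Fact (it defers to \cite{ZucThe}, Ch.\ 1), recording only that existence of $x$ is a consequence of compactness and uniqueness a consequence of uniform continuity, which is precisely the structure you implement. Your covering lemma for near ultrafilters (any finite open cover of $H\backslash G$ has a member in $p$) is the right way to cash in compactness in the near-ultrafilter picture, and your maximality argument for it is sound. I also checked the one delicate point in the continuity step: the NFIP argument a priori only places $\tilde f(q)$ in $\overline{V[U']}$ rather than in $U$, but your choice of a symmetric entourage $V$ separating $\overline{U'}$ from $X\setminus U$ (so that $V[\overline{U'}]\cap V[X\setminus U]=\emptyset$) closes this gap, since a point of $X\setminus U$ adherent to $V[U']$ would produce a point in that empty intersection. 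So the sketch is fully implementable as written, and the whole argument is self-contained where the paper simply cites the thesis.
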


\subsubsection{Fra\"iss\'e expansion classes}
		
This example will not be needed in later sections and assumes some familiarity with \fr theory and expansion classes (see \cite{KPT} or \cite{ZucMetr}). Suppose $L$ is a countable language and $G = \mathrm{Aut}(\mathbf{K})$ for some \fr $L$-structure $\mathbf{K} = \flim{\mathcal{K}}$ with underlying set $\omega$. Let $\fin{\mathbf{K}}$ denote the collection of finite substructures of $\mathbf{K}$. Let $\mathcal{K}^*$ be a reasonable precompact expansion of $\mathcal{K}$ in a countable language $L^*\supseteq L$. Let $X_{L^*}$ denote the space of $L^*$-structures on $\omega$ endowed with the logic topology. We can endow $X_{L^*}$ with a continuous $G$-action, where for a structure $x\in X_{L^*}$, a relational symbol $R\in L^*$ of arity $n$, points $a_0,\ldots, a_{n-1}\in \omega$, and $g\in G$, we have 
$$R^{x\cdot g}(a_0,\ldots,a_{n-1}) \Leftrightarrow R^x(ga_0,\ldots, ga_{n-1}).$$
The definition is similar for function and constant symbols. We then form the $G$-flow 
$$X_{\mathcal{K}^*} = \{\mathbf{K}^*\in X_{L^*}: \mathbf{K}^*|_L = \mathbf{K} \text{ and } \mathbf{K}^*|_\mathbf{A} \in \mathcal{K}^* \text{ for every } \mathbf{A}\in \fin{\mathbf{K}}\}.$$
For $\mathbf{A}\in \fin{\mathbf{K}}$ and an expansion $\mathbf{A}^*\in \mathcal{K}^*$, a typical basic clopen neighborhood of $X_{\mathcal{K}^*}$ is given by 
$$N_{\mathbf{A}^*} = \{\mathbf{K}^*\in X_{\mathcal{K}^*}: \mathbf{K}^*|_\mathbf{A} = \mathbf{A}^*\}.$$

\begin{prop}
	\label{Prop:APImpliesMHP}
	Suppose $\mathcal{K}^*$ has the amalgamation property (AP). Then $X_{\mathcal{K}^*}$ is MHP. 
\end{prop}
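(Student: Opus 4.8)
The plan is to verify the MHP condition of Definition~\ref{Def:MHP} directly, by turning it into a finite amalgamation problem inside $\mathbf{K}$. Fix $A\subseteq_{op} X_{\mathcal{K}^*}$, a point $x\in\overline{A}$, and $c>0$. Since $G=\aut{\mathbf{K}}$ is non-Archimedean, the pointwise stabilizers $G_{(F)}=\{g\in G: g|_F=\mathrm{id}_F\}$ of finite $F\subseteq\omega$ form a neighborhood basis of $1_G$, so I fix a finite $F$ with $G_{(F)}\subseteq U_c$ and set $\mathbf{F}^*:=x|_F$. I will show $N_{\mathbf{F}^*}\subseteq\overline{AU_c}$, which gives $x\in\mathrm{int}(\overline{AU_c})$ as required. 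Fix then $y\in N_{\mathbf{F}^*}$ and a finite $E\supseteq F$. Since the sets $N_{y|_E}$ form a neighborhood basis at $y$, it suffices to produce, for each such $E$, a point $z\in X_{\mathcal{K}^*}$ and an element $h\in G_{(F)}$ with $z|_E=y|_E$ and $z\cdot h\in A$: for then $z=(z\cdot h)\cdot h^{-1}\in A h^{-1}\subseteq AG_{(F)}\subseteq AU_c$, so $N_{y|_E}$ meets $AU_c$.

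The core is the amalgamation, and this is exactly where AP enters. As $A$ is open and $x\in\overline{A}$, I may pick $a\in A\cap N_{\mathbf{F}^*}$ together with a finite $T\supseteq F$ such that $N_{\mathbf{D}^*}\subseteq A$, where $\mathbf{D}^*:=a|_T\in\mathcal{K}^*$; note $\mathbf{D}^*|_F=\mathbf{F}^*$. Both $y|_E$ and $\mathbf{D}^*$ lie in $\mathcal{K}^*$ and restrict to $\mathbf{F}^*$ on $F$, so by the amalgamation property of $\mathcal{K}^*$ there is $\mathbf{W}^*\in\mathcal{K}^*$ with $L^*$-embeddings $\alpha\colon y|_E\to\mathbf{W}^*$ and $\delta\colon\mathbf{D}^*\to\mathbf{W}^*$ agreeing on $F$. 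Passing to $L$-reducts, $\mathbf{W}^*|_L\in\age{\mathbf{K}}$ contains the copy $\alpha[\mathbf{K}|_E]$ of $\mathbf{K}|_E$; by ultrahomogeneity of the Fra\"iss\'e limit $\mathbf{K}$ I realize $\mathbf{W}^*|_L$ as a finite substructure $\mathbf{K}|_W$ with $E\subseteq W\subseteq\omega$ so that $\alpha$ becomes the inclusion of $E$. Transporting the expansion produces an $L^*$-structure $\mathbf{W}^+$ on $W$ with $\mathbf{W}^+|_L=\mathbf{K}|_W$, $\mathbf{W}^+|_E=y|_E$, and a copy of $\mathbf{D}^*$ placed at some $W_\delta\subseteq W$ via an $L$-embedding $h_0\colon T\to W_\delta$ satisfying $h_0|_F=\mathrm{id}_F$.

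It remains to globalize $\mathbf{W}^+$ and to promote $h_0$ to a group element. Every member of $\mathcal{K}^*$ is realized on the corresponding finite domain by some point of $X_{\mathcal{K}^*}$ — a routine consequence of reasonableness together with the Fra\"iss\'e property of $\mathcal{K}^*$, whose limit $\flim{\mathcal{K}^*}$ has $L$-reduct $\mathbf{K}$ — so I fix $z\in X_{\mathcal{K}^*}$ with $z|_W=\mathbf{W}^+$; in particular $z|_E=y|_E$. Since $h_0$ is an $L$-embedding of $\mathbf{K}|_T$ into $\mathbf{K}$ fixing $F$ pointwise, ultrahomogeneity extends it to $h\in\aut{\mathbf{K}}=G$ with $h|_F=\mathrm{id}_F$, so $h\in G_{(F)}\subseteq U_c$. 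As $z|_{W_\delta}$ is the $h_0$-image of $\mathbf{D}^*$, we get $(z\cdot h)|_T=\mathbf{D}^*=a|_T$, hence $z\cdot h\in N_{\mathbf{D}^*}\subseteq A$. This furnishes the desired $z$ and $h$, so $y\in\overline{AU_c}$; as $y\in N_{\mathbf{F}^*}$ was arbitrary, $N_{\mathbf{F}^*}\subseteq\overline{AU_c}$ and $X_{\mathcal{K}^*}$ is MHP.

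The main obstacle is the amalgamation step: all the relevant finite data must be arranged to share the single base $\mathbf{F}^*=x|_F$, so that one application of AP simultaneously merges the target expansion $y|_E$ with a defining condition $\mathbf{D}^*$ of $A$; everything else (realizing the amalgam inside $\mathbf{K}$ with $E$ fixed, extending $\mathbf{W}^+$ to a point of $X_{\mathcal{K}^*}$, and extending $h_0$ to an automorphism) reduces to standard facts about Fra\"iss\'e limits.
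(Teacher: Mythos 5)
Correct, and essentially the paper's own argument in different packaging: the paper proves $\overline{N_{\mathbf{B}^*}U_{\mathbf{A}}} = N_{\mathbf{A}^*}$ (hence $\overline{WU_{\mathbf{A}}}$ is clopen) by exactly your amalgamation step---use AP over the common restriction to the finite base to amalgamate the basic clopen condition of an arbitrary point of $N_{\mathbf{A}^*}$ with a basic clopen condition inside the open set, realize the amalgam in $\mathbf{K}$, and extend the resulting embedding to an automorphism fixing the base pointwise---whereas you verify Definition~\ref{Def:MHP} directly by showing $N_{x|_F}\subseteq\overline{AU_c}$. One minor blemish: your appeal to ``the Fra\"iss\'e property of $\mathcal{K}^*$'' is neither available (only AP is assumed, not JEP, so $\mathcal{K}^*$ need not have a Fra\"iss\'e limit) nor needed, since the fact that every member of $\mathcal{K}^*$ is realized as $z|_W$ for some $z\in X_{\mathcal{K}^*}$ already follows from reasonableness and precompactness, which you also cite.
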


\begin{proof}
	For $\mathbf{A}\in \fin{\mathbf{K}}$, write $U_\mathbf{A}\subseteq G$ for the pointwise stabilizer of $\mathbf{A}$. Then $U_\mathbf{A}\subseteq G$ is a clopen subgroup and a typical basic open neighborhood of $1_G\in G$. Let $W\subseteq X_{\mathcal{K}^*}$ be open. It suffices to show that $\overline{WU_\mathbf{A}}$ is clopen. To that end, we will show that for any $\mathbf{B}\in \fin{\mathbf{K}}$ with $\mathbf{A}\subseteq \mathbf{B}$ and any expansion $\mathbf{B}^*\in \mathcal{K}^*$, we have $\overline{N_{\mathbf{B}^*}U_\mathbf{A}} = N_{\mathbf{A}^*}$, where $\mathbf{A}^*$ is the expansion of $\mathbf{A}$ inherited from $\mathbf{B}^*$. The left-to-right inclusion is clear. For the other way, suppose $\mathbf{C}\in \fin{\mathbf{K}}$ is finite and $\mathbf{C}^*$ is an expansion so that $N_{\mathbf{C}^*}\subseteq N_{\mathbf{A}^*}$. By shrinking $N_{\mathbf{C}^*}$ if necessary, we may assume that $\mathbf{A}^*\subseteq \mathbf{C}^*$. Using the AP in $\mathcal{K}^*$, we can find $\mathbf{D}\in \fin{\mathbf{K}}$ and an expansion $\mathbf{D}^*$ so that $\mathbf{C}^*\subseteq \mathbf{D}^*$ and $f[\mathbf{B}^*]\subseteq \mathbf{D}^*$ for some $f\in \emb{\mathbf{B}^*, \mathbf{D}^*}$ with $f|_\mathbf{A} = 1_\mathbf{A}$. If $g\in G$ satisfies $g|_\mathbf{B} = f$, then $N_{\mathbf{B}^*}\cdot g^{-1}\cap N_{\mathbf{C}^*}\supseteq N_{\mathbf{D}^*}$, so is non-empty as desired.
\end{proof}
\vspace{2 mm}

We can provide a converse result as follows. Recall that a $G$-flow $X$ is \emph{topologically transitive} if for every $A, B\subseteq_{op} X$, there is $g\in G$ with $Ag\cap B\neq \emptyset$. 
\vspace{2 mm}

\begin{prop}
	\label{Prop:MHPImpliesAP}
	Suppose $X$ is a metrizable MHP $G$-flow. Then there is an reasonable, precompact expansion class $\mathcal{K}^*$ with the AP so that $X\cong X_{\mathcal{K}^*}$. If $X$ is also topologically transitive, then we can take the class $\mathcal{K}^*$ to be  Fra\"iss\'e.
\end{prop}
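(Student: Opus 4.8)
The plan is to exploit that $G=\aut{\mathbf K}$ is non-Archimedean, so the clopen subgroups $U_{\mathbf A}$ ($\mathbf A\in\fin{\mathbf K}$) form a neighborhood basis at $1_G$, and to recover $\mathcal K^*$ from the clopen algebra of $X$. First I would show $X$ is zero-dimensional. Given $A\subseteq_{op} X$ and a clopen subgroup $V=U_{\mathbf A}$, the set $AV$ is open and $V$-invariant, so $\overline{AV}$ is $V$-invariant; applying the MHP condition to $AV$ and any $c$ with $U_c\subseteq V$ (so that $(AV)U_c=AV$) shows every point of $\overline{AV}$ lies in $\mathrm{int}(\overline{AV})$, i.e.\ $\overline{AV}$ is clopen. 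A regularity-plus-continuity-of-the-action argument then shows these sets form a basis, so $\mathrm{Clopen}(X)$ generates the topology, and by metrizability it is countable. Next I would show every $B\in\mathrm{Clopen}(X)$ has open stabilizer: since $(g,x)\mapsto\chi_B(xg)$ is continuous and $B$ is clopen, a compactness argument produces a neighborhood of any $g_0\in\mathrm{Stab}(B)$ fixing $B$ setwise. As $G$ is non-Archimedean, this means each $B$ is $U_{\mathbf A}$-invariant for some $\mathbf A\in\fin{\mathbf K}$.

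With these two facts in hand, the coding step is to present $X$ as an inverse limit of finite $U_{\mathbf A}$-spaces. Concretely, I would build a system of finite $U_{\mathbf A}$-invariant clopen partitions $\mathcal P_{\mathbf A}$ of $X$, indexed by $\fin{\mathbf K}$, that is coherent (if $\mathbf A\subseteq\mathbf B$ then $\mathcal P_{\mathbf B}$ refines $\mathcal P_{\mathbf A}$), equivariant ($\mathcal P_{g\mathbf A}=\mathcal P_{\mathbf A}\cdot g^{-1}$), and generating (the $\mathcal P_{\mathbf A}$ separate points, using that $\mathrm{Clopen}(X)$ is countable with each member $U_{\mathbf A}$-invariant for some $\mathbf A$). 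I would construct such a system by induction along an exhaustion of $\mathbf K$, defining $\mathcal P$ on representatives of each orbit of embeddings $\emb{\mathbf A,\mathbf K}$ and transporting by the action, working with ordered tuples so that the relevant stabilizers are the pointwise stabilizers $U_{\mathbf A}$ and no setwise-stabilizer obstruction arises. Introducing one relation symbol of $L^*$ for each atom of each $\mathcal P_{\mathbf A}$ (up to this orbit-equivalence) yields the language; the map $s\colon X\to X_{L^*}$ recording, for $x$ and each tuple, the atom containing $x$, is a $G$-embedding by Stone duality, and I set $\mathcal K^*=\{s(x)|_{\mathbf A}:x\in X,\ \mathbf A\in\fin{\mathbf K}\}$. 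Coherence makes $\mathcal K^*$ closed under substructures and reasonable, and gives $X\cong X_{\mathcal K^*}$. The key point for precompactness is that only finitely many relation symbols are active on any fixed finite $\mathbf A$; this is exactly why the partitions must be finite at each level, and keeping them finite while still generating $\mathrm{Clopen}(X)$ is the step requiring the most care.

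To see that $\mathcal K^*$ has the AP, I would reverse the computation in the proof of Proposition~\ref{Prop:APImpliesMHP}. Arranging, via the closure operator $B\mapsto\overline{BU_{\mathbf A}}$ (which lands in $\mathrm{Clopen}(X)$ by the zero-dimensionality step), that the level-$\mathbf A$ partition satisfies $\overline{N_{\mathbf B^*}U_{\mathbf A}}=N_{\mathbf A^*}$ whenever $\mathbf A\subseteq\mathbf B$ and $\mathbf A^*=\mathbf B^*|_{\mathbf A}$, the MHP hypothesis guarantees this identity. Given expansions $\mathbf A^*\subseteq\mathbf B^*$ and $\mathbf A^*\subseteq\mathbf C^*$ in $\mathcal K^*$, a point $z\in N_{\mathbf C^*}$ lies in $N_{\mathbf A^*}=\overline{N_{\mathbf B^*}U_{\mathbf A}}$, so $N_{\mathbf C^*}$ meets $N_{\mathbf B^*}g$ for some $g\in U_{\mathbf A}$; since $g$ fixes $\mathbf A$ pointwise, any witnessing point restricts to a finite expansion $\mathbf D^*$ amalgamating $\mathbf B^*$ and $\mathbf C^*$ over $\mathbf A^*$.

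Finally, if $X$ is topologically transitive, then for all basic clopen $N_{\mathbf B^*},N_{\mathbf C^*}$ there is $g\in G$ with $N_{\mathbf B^*}g\cap N_{\mathbf C^*}\neq\emptyset$, which unwinds to the statement that any two members of $\mathcal K^*$ embed into a common one, i.e.\ the JEP. Together with the AP, precompactness, and reasonableness already established, this makes $\mathcal K^*$ a Fra\"iss\'e expansion class. I expect the construction of the coherent, equivariant, generating system of finite partitions (balancing finiteness at each level against generation, which underlies both $X\cong X_{\mathcal K^*}$ and precompactness) to be the main obstacle; the derivation of AP from MHP, though the conceptual heart, is essentially forced once that system reproduces the closure identity of Proposition~\ref{Prop:APImpliesMHP}.
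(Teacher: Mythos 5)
Your outline follows the same general strategy as the paper (invariant clopen algebras, partition pieces as expansions, AP from a closure identity), but it has a genuine gap at exactly the step you defer as ``the main obstacle,'' and that gap is not bookkeeping. The paper's key lemma --- which you never prove and which your construction cannot avoid --- is that for each $\mathbf{A}\in\fin{\mathbf{K}}$ the \emph{entire} algebra $\mathcal{B}(\mathbf{A})$ of $U_\mathbf{A}$-invariant clopen subsets of $X$ is finite, hence atomic. This is where metrizability really enters, and not merely through countability of the clopen algebra: if $\mathcal{B}(\mathbf{A})$ contained an infinite pairwise disjoint family $\{Y_n: n<\omega\}$, then MHP makes $\overline{Y_S}$ clopen for \emph{every} $S\subseteq\omega$ (where $Y_S=\bigcup_{n\in S}Y_n$, using $\overline{Y_S U_\mathbf{A}}=\overline{Y_S}$), so $\overline{Y_S}\cap\overline{Y_T}=\emptyset$ for disjoint $S,T$, and choosing $p_n\in Y_n$ embeds a copy of $\beta\omega$ into $X$, contradicting metrizability. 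Note that an infinite disjoint clopen family by itself does not contradict metrizability (a convergent sequence of isolated points has one); the MHP argument applied to closures of arbitrary unions is essential. Once this finiteness lemma is in hand, your ``coherent, equivariant, generating system of finite partitions'' exists canonically --- take $\mathcal{P}_\mathbf{A}=\mathrm{Atoms}(\mathbf{A})$, the atoms of $\mathcal{B}(\mathbf{A})$ --- and no inductive construction along an exhaustion, and no fuss about setwise stabilizers, is needed.

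Moreover, without that lemma your AP step is wrong as stated. You claim ``the MHP hypothesis guarantees'' the identity $\overline{N_{\mathbf{B}^*}U_\mathbf{A}}=N_{\mathbf{A}^*}$. It does not: MHP guarantees only that $\overline{N_{\mathbf{B}^*}U_\mathbf{A}}$ is a $U_\mathbf{A}$-invariant clopen subset \emph{contained in} $N_{\mathbf{A}^*}$. Equality for all $\mathbf{B}^*$ lying over $\mathbf{A}^*$ is equivalent to $N_{\mathbf{A}^*}$ having no proper non-empty $U_\mathbf{A}$-invariant clopen subset, i.e.\ to $N_{\mathbf{A}^*}$ being an atom of $\mathcal{B}(\mathbf{A})$ (equivalently, to $U_\mathbf{A}$ acting topologically transitively on $N_{\mathbf{A}^*}$, which is how the paper runs the amalgamation). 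If a piece of your hand-built partition $\mathcal{P}_\mathbf{A}$ properly contained a non-empty $U_\mathbf{A}$-invariant clopen set $Z$, then by your own generation requirement $Z$ would be a union of pieces at some finer level $\mathbf{B}$, and any piece $N_{\mathbf{B}^*}\subseteq Z$ would violate the identity; amalgamating that $\mathbf{B}^*$ with any $\mathbf{C}^*$ whose piece lies in $N_{\mathbf{A}^*}\setminus Z$ then fails. So ``arranging'' the identity forces your pieces to be the atoms of the full algebra, which forces precisely the finiteness statement you skipped. The argument can be completed, but only by proving the $\beta\omega$ lemma; as written, your proposal mislocates the role of metrizability and rests on a false implication of MHP at the crux of the AP proof.
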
 
\vspace{2 mm}

\begin{proof}
Let $\mathbf{A}\in \fin{\mathbf{K}}$. Then if $W\subseteq X$ is open, the equality $\overline{W\cdot U_\mathbf{A}}\cdot U_\mathbf{A} = \overline{W\cdot U_\mathbf{A}}$ and MHP show that $\overline{W\cdot U_\mathbf{A}}$ is clopen. Call a clopen set $Y\subseteq X$ \emph{$U_\mathbf{A}$-clopen} if $Y\cdot U_\mathbf{A} = Y$; the collection $\mathcal{B}(\mathbf{A})$ of $U_\mathbf{A}$-clopen sets forms an algebra. 

Suppose $\mathcal{B}(\mathbf{A})$ were infinite. Then we could find $\{Y_n: n< \omega\}$ a collection of pairwise disjoint members of $\mathcal{B}(\mathbf{A})$. For $S\subseteq \omega$, write $Y_S = \bigcup_{n\in S} Y_n$. Then if $y\in \overline{Y_S}$, we have $y\in \mathrm{int}(\overline{Y_S\cdot U_\mathbf{A}}) = \mathrm{int}(\overline{Y_S})$, i.e.\ the set $\overline{Y_S}$ is clopen. It follows that for $S,T\subseteq \omega$ disjoint, we have $\overline{Y_S}\cap \overline{Y_T} = \emptyset$. It follows that if $p_n\in Y_n$ for each $n< \omega$, then $\overline{\{p_n: n< \omega\}}$ is isomorphic to $\beta \mathbb{N}$, contradicting our assumption that $X$ is metrizable. Hence $\mathcal{B}(\mathbf{A})$ is finite, hence atomic. Let $\mathrm{Atoms}(\mathbf{A})\subseteq \mathcal{B}(\mathbf{A})$ denote the atoms.

To each $\mathbf{A}\in \fin{\mathbf{K}}$, we can view $\mathrm{Atoms}(\mathbf{A})$ as a set of ``expansions'' of $\mathbf{A}$. Suppose $\mathbf{B}\in \fin{\mathbf{K}}$, $Z\in \mathrm{Atoms}(\mathbf{B})$, and $f\colon \mathbf{A}\to \mathbf{B}$ is an embedding. We need to determine which expansion of $\mathbf{A}$ is induced by $f$ when we expand $\mathbf{B}$ using $Z$. We do this as follows: first find $g\in G$ with $g|_\mathbf{A} = f$. We will argue that $Zg$ is contained in some $U_\mathbf{A}$-atom, and that this does not depend on the $g$ we chose. So suppose $W$ is $U_\mathbf{A}$-clopen. By choice of $g$, We have $g^{-1}U_\mathbf{B}g\subseteq U_\mathbf{A}$, so $Wg^{-1}U_\mathbf{B} = Wg^{-1}$. This shows that $Wg^{-1}$ is $U_\mathbf{B}$-clopen. Therefore if $Zg\cap W\neq \emptyset$, then $Z\cap Wg^{-1}\neq \emptyset$, so $Z\subseteq Wg^{-1}$ and $Zg\subseteq W$. It follows that $Zg$ is contained in some $U_\mathbf{A}$-atom, say $Y$. If $h\in G$ also satisfies $h|_\mathbf{A} = f$, then $g^{-1}h\in U_\mathbf{A}$, so $Zg(g^{-1}h)\subseteq Y$ as well. Therefore if $\mathbf{B}^Z$ is the corresponding expansion of $\mathbf{B}$, we declare that $\mathbf{A}^Y$ is the expansion that $\mathbf{A}$ inherits from $\mathbf{B}^Z$ along the map $f\colon \mathbf{A}\to \mathbf{B}$. All of this can be coded by adding countably many new relational symbols to $L$, producing a language $L^*\supseteq L$ and a reasonable precompact expansion class $\mathcal{K}^*$ of $\mathcal{K}$. 

For each $\mathbf{A}\in \fin{\mathbf{K}}$, the set $\mathrm{Atoms}(\mathbf{A})$ is a finite clopen partition of the space $X$. If $x\in X$, it follows that $x\in Y$ for exactly one $U_\mathbf{A}$-atom for each $\mathbf{A}\in \fin{\mathbf{K}}$, giving rise to a surjective $G$-map $\phi\colon X\to X_{\mathcal{K}^*}$. If $x\neq y\in X$, then by continuity of the action, we can find $V\ni_{op} x$, $W\ni_{op} y$, and $\mathbf{A}\in \fin{\mathbf{K}}$ with $\overline{VU_\mathbf{A}}\cap \overline{WU_\mathbf{A}} = \emptyset$, showing that $\phi$ is injective, hence an isomorphism. 

To show that this expansion class has the AP, suppose we have $\mathbf{A}, \mathbf{B}, \mathbf{C}\in \fin{\mathbf{K}}$ with $\mathbf{A}\subseteq \mathbf{B}$ and $\mathbf{A}\subseteq \mathbf{C}$. Let $Y_A, Y_B, Y_C\subseteq X$ be clopen atomic sets for $U_\mathbf{A}, U_\mathbf{B}, U_\mathbf{C}$, respectively, with $Y_B\subseteq Y_A$ and $Y_C\subseteq Y_A$. Since $Y_A$ is a $U_\mathbf{A}$-atom, the action of $U_\mathbf{A}$ on $Y_A$ is topologically transitive, so we can find $g\in U_\mathbf{A}$ with $Y_Cg \cap Y_B\neq \emptyset$. We can then find some suitably large finite $\mathbf{D}\subseteq \mathbf{K}$ so that for some $U_\mathbf{D}$-atom $Y_D$ we have $Y_D\subseteq Y_Cg\cap Y_B$. By enlarging $\mathbf{D}$ more if needed, we can assume that $\mathbf{B}\subseteq \mathbf{D}$ and $g^{-1}[\mathbf{C}]\subseteq \mathbf{D}$. It follows that $i_\mathbf{B}\colon \mathbf{B}^{Y_B}\to \mathbf{D}^{Y_D}$ and $(g^{-1})|_\mathbf{C}\colon \mathbf{C}^{Y_C}\to \mathbf{D}^{Y_D}$ amalgamate the maps $i_\mathbf{A}\colon \mathbf{A}^{Y_A}\to \mathbf{B}^{Y_B}$ and $i_\mathbf{A}\colon \mathbf{A}^{Y_A}\to \mathbf{C}^{Y_C}$. 

A similar argument shows that if $X$ is topologically transitive, then the expansion $\mathcal{K}^*$ that we constructed above will have the joint embedding property (JEP) as well. 
\end{proof}

In the case that $X$ is topologically transitive, MHP, but not necessarily metrizable, two important cases emerge. Either for every finite $\mathbf{A}\subseteq \mathbf{K}$, the algebra of $U_\mathbf{A}$-clopen sets is atomic, or this fails for some $\mathbf{A}$; the equivalent conditions of  Theorem~\ref{IntroThm:MHP} correspond to the first case. 
\vspace{2 mm}

\section{Topometrics on MHP flows}

For the rest of the section, fix an MHP flow $(X, \tau)$, where $\tau$ is the compact topology on $X$. Our goal is to endow $X$ with a canonical topometric structure. This has been done in the case of $\Sa(G)$ in \cite{BYMT}, where they use the following definition. Before stating the definition, we note that if $f\colon G\to [0,1]$ is left-uniformly continuous, we can continuously extend it to $\Sa(G)$, and we will also use $f\colon \Sa(G)\to [0,1]$ to denote this extension.
\vspace{2 mm}

\begin{defin}
	\label{Def:TopometricSG}
	Given $p, q\in \Sa(G)$, we set 
	$$\partial(p, q) = \sup(|f(p) - f(q)|:\, f\colon G\to [0,1] \text{ $1$-Lipschitz}).$$
\end{defin}
\vspace{2 mm}

Notice that if $f\colon G\to [0,1]$ is $1$-Lipschitz and we continuously extend to $\Sa(G)$, then $f$ has the following property, which we define more generally.
\vspace{2 mm}

\begin{defin}
	\label{Def:OrbitLip}
	Let $X$ be a $G$-flow. A function $f\in C(X, [0,1])$ is called \emph{orbit Lipschitz} if whenever $x\in X$ and $g\in G$, we have 
	$$|f(x) - f(xg)| \leq d_G(1_G, g).$$
	We write $C_{OL}(X, [0,1])$ for the collection of orbit Lipschitz functions.
\end{defin}
\vspace{2 mm}

Eventually, we will show that the analogue of Definition \ref{Def:TopometricSG} with $1$-Lipschitz replaced by orbit Lipschitz provides the MHP flow $X$ with a topometric structure. The problem is that a priori, we do not know whether $X$ has any non-constant orbit Lipschitz functions. Therefore we start with an entirely different definition of the topometric structure, then use Fact~\ref{Fact:ExistLipschitz} to produce an ample supply of continuous Lipschitz functions, which will turn out to be precisely the orbit Lipschitz functions. 
\vspace{2 mm}

\begin{defin}
	\label{Def:TopometricGeneral}
	Given $x, y\in X$ and $c\geq 0$, we define $\partial(x, y) \leq c$ iff any of the following four equivalent items hold.
	\vspace{-2 mm}
	
	\begin{enumerate}
		\item 
		Whenever $A\subseteq_{op} X$ with $x\in \overline{A}$ and $\epsilon > 0$, we have $y\in \mathrm{int}(\overline{AU_{c+\epsilon}})$.
		\item 
		Whenever $A\subseteq_{op} X$ with $x\in \overline{A}$ and $\epsilon > 0$, we have $y\in \overline{AU_{c+\epsilon}}$.
		\item 
		Whenever $A\ni_{op} x$ and $\epsilon > 0$, we have $y\in \mathrm{int}(\overline{AU_{c+\epsilon}})$.
		\item 
		Whenever $A\ni_{op} x$ and $\epsilon > 0$, we have $y\in \overline{AU_{c+\epsilon}}$.
	\end{enumerate}
\end{defin}

\begin{rem}
	The directions $(1)\Rightarrow (2)\Rightarrow (4)$ as well as $(1)\Rightarrow (3)\Rightarrow (4)$ are clear. Suppose $(4)$ holds, and let $A\subseteq_{op} X$ with $x\in \overline{A}$. Also fix $\epsilon > 0$. Then as $X$ is MHP, we have $x\in \mathrm{int}(\overline{AU_{\epsilon}})$. By $(4)$, we have $y\in \overline{\mathrm{int}(\overline{AU_{\epsilon}})U_{c+\epsilon}} \subseteq \overline{AU_{c+2\epsilon}}$. Using MHP once more, we obtain $y\in \mathrm{int}(\overline{AU_{c+3\epsilon}})$, showing that $(1)$ holds.
\end{rem}
\vspace{2 mm}

\begin{prop}
	\label{Prop:IsAMetric}
	The function $\partial$ from Definition \ref{Def:TopometricGeneral} is a topometric on $X$. 
\end{prop}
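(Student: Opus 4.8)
The plan is to check directly that $\partial$ from Definition~\ref{Def:TopometricGeneral} satisfies the axioms of a lower semicontinuous metric in the sense of Definition~\ref{Def:TopometricSpace}, freely using the four equivalent formulations. First I would confirm $\partial(x,y):=\inf\{c\ge 0:\partial(x,y)\le c\}$ is well defined by noting the set of valid $c$ is a closed upward ray: if $\partial(x,y)\le c$ for every $c>c_0$, then given $A\ni_{op}x$ and $\epsilon>0$, choosing $c_0<c<c_0+\epsilon$ and applying item $(4)$ at this $c$ with witness $\epsilon'=c_0+\epsilon-c>0$ gives $y\in\overline{AU_{c_0+\epsilon}}$, so $\partial(x,y)\le c_0$. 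Nonnegativity is built in, and $\partial(x,x)=0$ is immediate since $A\subseteq AU_\epsilon$ forces $x\in\overline{A}\subseteq\overline{AU_\epsilon}$. For the identity of indiscernibles, if $x\ne y$ then compact Hausdorffness gives open $V\ni x$, $W\ni y$ with $\overline V\cap\overline W=\emptyset$, and continuity of the action at $(x,1_G),(y,1_G)$ yields $A\ni_{op}x$, $B\ni_{op}y$, $\eta>0$ with $\overline{AU_\eta}\subseteq\overline V$ and $\overline{BU_\eta}\subseteq\overline W$, hence disjoint. Then $y\notin\overline{AU_{c+\epsilon}}$ whenever $c+\epsilon\le\eta$, so item $(4)$ fails for $c<\eta$ and $\partial(x,y)\ge\eta>0$.

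I expect the main obstacle to be symmetry, since the definition is visibly one-sided. The resolution is to recast item $(4)$ in a two-sided form. Since $y\in\overline{AU_r}$ exactly when $B\cap AU_r\ne\emptyset$ for every $B\ni_{op}y$, we have $\partial(x,y)\le c$ iff for all $A\ni_{op}x$, all $B\ni_{op}y$, and all $\epsilon>0$ we get $AU_{c+\epsilon}\cap B\ne\emptyset$. Because $d_G$ is left invariant, $d_G(1_G,g)=d_G(1_G,g^{-1})$, so $U_r^{-1}=U_r$; consequently $AU_r\cap B\ne\emptyset$ iff $A\cap BU_r\ne\emptyset$ (write an intersection point as $ag=b$ and read it as $a=bg^{-1}$ with $g^{-1}\in U_r$). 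This condition is symmetric under interchanging $(A,x)$ with $(B,y)$, so $\partial(x,y)\le c\iff\partial(y,x)\le c$, giving $\partial(x,y)=\partial(y,x)$.

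For the triangle inequality, suppose $\partial(x,y)\le c$ and $\partial(y,z)\le d$, and fix $A\ni_{op}x$ and $\epsilon>0$. Item $(4)$ for the first hypothesis gives $y\in\overline{AU_{c+\epsilon/2}}$; setting $B:=AU_{c+\epsilon/2}$, an open set with $y\in\overline B$, item $(2)$ for the second hypothesis gives $z\in\overline{BU_{d+\epsilon/2}}\subseteq\overline{AU_{c+d+\epsilon}}$, using $U_{c+\epsilon/2}U_{d+\epsilon/2}\subseteq U_{c+d+\epsilon}$. As $A$ and $\epsilon$ were arbitrary, item $(4)$ yields $\partial(x,z)\le c+d$; the ability to pass between the equivalent formulations (one feeding an open neighborhood of $x$, the other accepting any open set whose closure contains $y$) is exactly what lets the two hypotheses chain together.

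Finally, for lower semicontinuity I would show $X^2\setminus\{(x,y):\partial(x,y)\le c\}$ is open. If $\partial(x_0,y_0)\not\le c$, negating item $(4)$ produces $A\ni_{op}x_0$ and $\epsilon>0$ with $y_0\notin\overline{AU_{c+\epsilon}}$; then $A\times(X\setminus\overline{AU_{c+\epsilon}})$ is an open neighborhood of $(x_0,y_0)$ in the complement, since for any $(x,y)$ in it the same $A$ remains an open neighborhood of $x$ while $y\notin\overline{AU_{c+\epsilon}}$, so $\partial(x,y)\not\le c$. Thus $\{(x,y):\partial(x,y)\le c\}$ is $(\tau\times\tau)$-closed, which is the lower semicontinuity demanded by Definition~\ref{Def:TopometricSpace}. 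I would add the caveat that $\partial$ may take the value $+\infty$ between points lying in distinct minimal subflows, so in general $\partial$ is an extended metric, bounded by the $d_G$-diameter whenever $X$ is topologically transitive (as for $\Sa(G)$ and $M(G)$).
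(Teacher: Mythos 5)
Your proof is correct and follows essentially the same route as the paper: direct verification of the metric axioms and lower semicontinuity straight from the equivalent clauses of Definition~\ref{Def:TopometricGeneral}, with symmetry coming from $U_r^{-1}=U_r$ (left invariance) and the triangle inequality by feeding the open set $AU_{c+\epsilon/2}$ produced by item (4) into item (2). The remaining differences are cosmetic---you phrase point separation contrapositively, prove lower semicontinuity by exhibiting a basic open neighborhood inside the complement rather than via nets, and add the legitimate bookkeeping caveat (which the paper leaves implicit) that $\partial$ can take the value $+\infty$ when $X$ is not topologically transitive, e.g.\ on a disjoint union of two MHP flows, so that strictly speaking $\partial$ is an extended metric in general.
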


\begin{proof}
	Suppose $x, y\in X$ have $\partial(x, y) = 0$. If $A\ni_{op} x$, we can find $B\ni_{op} x$ and $\epsilon > 0$ with $\overline{BU_\epsilon}\subseteq A$. So in particular $y\in A$, so $x = y$.
	
	Suppose $\partial(x, y)\leq c$ for some $c\geq 0$ towards showing that $\partial(y, x)\leq c$. Let $B\ni_{op} y$ and $\epsilon > 0$. Notice that if $A\ni_{op} x$, then $AU_{c+\epsilon}\cap B\neq \emptyset$. So also $A\cap BU_{c+\epsilon}\neq \emptyset$. It follows that $x\in \overline{BU_{c+\epsilon}}$.
	
	Now suppose $\partial(x, y) \leq c$ and $\partial(y, z) \leq d$. Fix $A\ni_{op} x$ and $\epsilon > 0$. Then $AU_{c+\epsilon}$ is open with $y\in \overline{AU_{c+\epsilon}}$. We then have $z\in \overline{AU_{c+d+2\epsilon}}$, showing that $\partial(x, z) \leq c+d$ as desired.
	
	Having shown that $\partial$ is a metric on $X$, we now show that it is $\tau$-lsc. Fix $c\geq 0$, and let $x_i\to x$ and $y_i\to y$ be nets with $\partial(x_i, y_i)\leq c$. Let $A\ni_{op} x$, and fix $\epsilon > 0$. Then for a tail of $x_i$, we also have $x_i\in A$, implying that $y_i\in \overline{AU_{c+\epsilon}}$. So $y\in \overline{AU_{c+\epsilon}}$, and by MHP, $y\in \mathrm{int}(\overline{AU_{c+2\epsilon}})$. It follows that $\partial(x, y)\leq c$.
\end{proof}
\vspace{2 mm}

\begin{rem}
	If $G$ is a discrete group and $X$ is an MHP $G$-flow, then $\partial$ is just the discrete metric on $X$. If $G$ is locally compact and $c\geq 0$ is small enough so that $U_{c+\epsilon}\subseteq G$ is precompact for some $\epsilon > 0$, then given an MHP $G$-flow $X$ and $x, y\in X$, we have $\partial(x, y)\leq c$ iff there is $g\in G$ with $d(g, 1_G) \leq c$ and $xg = y$. Hence topometric structures on MHP flows are most interesting when $G$ is not locally compact.
\end{rem}
\vspace{2 mm}

\begin{rem}
	Suppose $H\subseteq G$ is a closed subgroup, and form $\Sa(H\backslash G)$. On the orbit $H\b G\subseteq \Sa(H\backslash G)$, the metric $\partial$ coincides with the metric $d$. In particular, this is true for $G\subseteq \Sa(G)$. This will be easiest to see by using Corollary~\ref{Cor:Topometric}.
\end{rem}
\vspace{2 mm}

\begin{rem}
	Suppose $\mathbf{K} = \flim{\mathcal{K}}$ is a \fr structure with $G = \aut{\mathbf{K}}$. Write $\mathbf{K} = \bigcup_{n\geq 1} \mathbf{A}_n$ as an increasing union of finite structures. A compatible left-invariant metric $d$ on $G$ is given by $d(g, h) \leq 1/n$ iff $g|_{\mathbf{A}_n} = h|_{\mathbf{A}_n}$. Write $V_n = \{g\in G: g|_{\mathbf{A}_n} = \mathrm{id}_{\mathbf{A}_n}\}$, and notice that for any suitably small $\epsilon > 0$, we have $V_n = U_{1/n + \epsilon}$.
	
	Now suppose $X$ is an MHP $G$-flow. As in the discussion before Proposition~\ref{Prop:MHPImpliesAP}, let $\mathcal{B}_n$ be the Boolean algebra of $V_n$-clopen subsets of $X$. As we make no metrizability assumption here, $\mathcal{B}_n$ may be infinite. However, if $\mathcal{Y}\subseteq \mathcal{B}$ and we set $Y = \bigcup \mathcal{Y}$, we see that for $y\in \overline{Y}$, we have $y\in \mathrm{int}(\overline{Y\cdot V_n}) = \mathrm{int}(\overline{Y})$. In particular, $\overline{Y}\in \mathcal{B}_n$. Setting $\bigvee \mathcal{Y} = \overline{Y}$, we see that $\mathcal{B}_n$ is a complete Boolean algebra. Let $X_n = \mathrm{St}(\mathcal{B}_n)$ be the Stone space. Then $X\cong \varprojlim X_n$, and given $x = (x_n)_n$ and $y = (y_n)_n$ in $X$, we have that $\partial(x, y) \leq 1/n$ iff $x_n = y_n$. To see this, first suppose $x_n\neq y_n$, and find some $A\in \mathcal{B}_n$ with $x\in A$ and $y\not\in A$. But since $A = AU_{1/n+\epsilon} = \overline{AU_{1/n+\epsilon}}$, we have $\partial(x, y) > 1/n$ by item $(4)$ of Definition~\ref{Def:TopometricGeneral}. In the other direction, suppose $x_n = y_n$. Then if $A\ni_{op} x$, we have $\mathrm{int}(\overline{AU_{1/n+\epsilon}})\in \mathcal{B}_n$, hence $y\in \mathrm{int}(\overline{AU_{1/n+\epsilon}})$. Therefore $\partial(x, y)\leq 1/n$ by item $(3)$ of Definition~\ref{Def:TopometricGeneral}. 
\end{rem}
\vspace{2 mm}

We next investigate how this topometric structure interacts with the $G$-flow structure. Not only is this a canonical topometric to place on an MHP flow $X$, but it will also behave well when comparing different MHP flows. When discussing multiple MHP flows $X$, $Y$, etc., we write $\partial_X$, $\partial_Y$, etc.\ to refer to the topometric structure on each flow.
\vspace{2 mm}

\begin{prop}
	\label{Prop:TopometricDynamics}
	Let $X$ and $Y$ be MHP flows endowed with the topometric structure from Definition~\ref{Def:TopometricGeneral}. 
	\vspace{-2 mm}
	
	\begin{enumerate}
		\item 
		If $x\in X$ and $g\in G$, then $\partial(x, xg) \leq d_G(1_G, g)$.
		\item 
		For each $g\in G$, the map $\rho_g\colon (X, \partial)\to (X, \partial)$ given by $\rho_g(x) = xg$ is uniformly continuous.
		\item
		If $\phi\colon X\to Y$ is a $G$-map, then $\phi$ is metrically non-expansive, i.e.\ for any $x, y\in X$, we have $\partial_Y(\phi(x), \phi(y))\leq \partial_X(x, y)$.
	\end{enumerate}
\end{prop}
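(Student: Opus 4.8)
The plan is to prove the three items by unwinding Definition~\ref{Def:TopometricGeneral}, working directly with the characterization $\partial(x,y)\le c$ in terms of open sets and the sets $AU_c$. All three statements should follow from item $(4)$ (or equivalently item $(2)$) of that definition together with the basic inclusion $U_cU_\epsilon\subseteq U_{c+\epsilon}$ and the left-invariance of $d_G$.

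For item $(1)$, I would fix $x\in X$ and $g\in G$ with $d_G(1_G,g)=c$, and verify $\partial(x,xg)\le c$ directly using item $(4)$. So let $A\ni_{op} x$ and $\epsilon>0$; I want $xg\in\overline{AU_{c+\epsilon}}$. Since $x\in A$ and $A$ is open, for the net (or by openness directly) we have $x\in AU_{c+\epsilon}$ trivially, but the point is that $xg\in AU_{c+\epsilon}$ because $g\in U_{c+\epsilon}$, as $d_G(1_G,g)=c<c+\epsilon$. Indeed $xg\in \{x\}\cdot U_{c+\epsilon}\subseteq AU_{c+\epsilon}$, so certainly $xg\in\overline{AU_{c+\epsilon}}$. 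As this holds for every such $A$ and $\epsilon$, item $(4)$ gives $\partial(x,xg)\le c$.

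For item $(2)$, the plan is to combine item $(1)$ with the triangle inequality and left-invariance of $d_G$ to get a uniform modulus of continuity for $\rho_g$. Given $g\in G$, I claim that for any $x,y\in X$ we have $\partial(xg,yg)\le \partial(x,y)+2d_G(1_G,g)$, or more simply I can use a direct estimate: if $\partial(x,y)\le c$, I want to bound $\partial(xg,yg)$. Here the key observation is that the right-translation action interacts with $U_c$ via conjugation, and the metric $d_G$ is only \emph{left}-invariant, so $gU_cg^{-1}$ need not equal $U_c$. The clean route is instead the triangle inequality: $\partial(xg,yg)\le \partial(xg,x)+\partial(x,y)+\partial(y,yg)\le 2d_G(1_G,g)+\partial(x,y)$ using item $(1)$ and symmetry. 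This shows $\rho_g$ is Lipschitz (indeed an isometry up to an additive constant), which in particular yields uniform continuity; alternatively, one obtains genuine uniform continuity since $d_G(1_G,g)$ is a fixed constant not depending on $x,y$.

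For item $(3)$, the plan is to show that a $G$-map $\phi\colon X\to Y$ pushes forward the open-set witnesses. Suppose $\partial_X(x,y)\le c$; I want $\partial_Y(\phi(x),\phi(y))\le c$ via item $(4)$ in $Y$. Let $B\ni_{op}\phi(x)$ in $Y$ and $\epsilon>0$. Then $A:=\phi^{-1}(B)\ni_{op} x$ is open in $X$ by continuity, so item $(4)$ in $X$ gives $y\in\overline{AU_{c+\epsilon}}$. Applying the continuous $G$-map $\phi$ and using that $\phi$ commutes with the $G$-action, $\phi(y)\in\phi[\overline{AU_{c+\epsilon}}]\subseteq\overline{\phi[A]U_{c+\epsilon}}\subseteq\overline{BU_{c+\epsilon}}$, since $\phi[A]=\phi[\phi^{-1}(B)]\subseteq B$. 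Thus $\phi(y)\in\overline{BU_{c+\epsilon}}$ for all such $B,\epsilon$, giving $\partial_Y(\phi(x),\phi(y))\le c$. \textbf{The main subtlety} I expect is precisely the asymmetry of the left-invariant metric under right translation in item $(2)$: one must avoid any false claim that right-translation preserves the balls $U_c$, and instead route uniform continuity through item $(1)$ and the triangle inequality, where the additive constant $2d_G(1_G,g)$ is harmless since $g$ is fixed.
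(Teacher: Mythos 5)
Your proofs of items (1) and (3) are correct and essentially the same as the paper's (item (1) verbatim; item (3) pushes sets forward through $\phi$ where the paper pulls them back, but these are interchangeable). However, your proof of item (2) has a genuine gap. The triangle-inequality bound $\partial(xg,yg)\le \partial(xg,x)+\partial(x,y)+\partial(y,yg)\le \partial(x,y)+2d_G(1_G,g)$ does not establish uniform continuity, or even continuity, of $\rho_g$. Uniform continuity requires that for every $c>0$ there exist $d>0$ such that $\partial(x,y)<d$ implies $\partial(xg,yg)\le c$. Your estimate carries the fixed additive error $2d_G(1_G,g)$, which does not shrink as $\partial(x,y)\to 0$; so as soon as $c<2d_G(1_G,g)$ the bound says nothing at all. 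The parenthetical claims compound the problem: a map satisfying your inequality is an ``isometry up to an additive constant'' only in the coarse-geometric sense, which is strictly weaker than continuity, and it is not Lipschitz, since Lipschitz means a multiplicative bound with no additive term.

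The fix is precisely the conjugation route you set aside as unavailable. You are right that $d_G$ is only left-invariant, so the balls $U_d$ are not preserved by conjugation; but conjugation by $g$ is a homeomorphism of $G$ fixing $1_G$, so for every $c>0$ there \emph{exists} $d>0$ with $g^{-1}U_dg\subseteq U_c$. This is exactly what the paper does: assume $\partial(x,y)<d$, fix $A\ni_{op} xg$ and note $Ag^{-1}\ni_{op} x$; then $y\in\overline{Ag^{-1}U_d}$, hence $yg\in\overline{Ag^{-1}U_dg}\subseteq\overline{AU_c}$, which gives $\partial(xg,yg)\le c$. The modulus $d$ here depends on $g$ (through the conjugation), which is harmless since $g$ is fixed; what cannot be tolerated is an error term that is a fixed positive constant independent of $d$, which is what your argument produces.
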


\begin{proof}
	For item (1), write $c = d_G(1_G, g)$. Then for any $\epsilon > 0$, we have $g\in U_{c+\epsilon}$. Hence if $A\ni_{op} x$, we have $xg\in AU_{c+\epsilon}$.
	
	For item (2), fix $c > 0$. Find $d > 0$ so that $g^{-1}U_dg\subseteq U_c$. Now suppose $x, y\in X$ satisfy $\partial(x, y) < d$. Let $A\ni_{op} xg$, and fix $\epsilon > 0$. Then $Ag^{-1}\ni_{op} x$, so $y\in \overline{Ag^{-1}U_d}$. It follows that $yg\in \overline{AU_c}$, so $\partial(x, y)\leq c$.
	
	For item (3), write $c = \partial_X(x, y)$, and let $B\ni_{op} \phi(x)$. Then $x\in \phi^{-1}(B)$, so we have $y\in \overline{\phi^{-1}(B)U_{c+\epsilon}} = \overline{\phi^{-1}(BU_{c+\epsilon})}\subseteq \phi^{-1}(\overline{BU_{c+\epsilon}})$ for any $\epsilon > 0$. So $\phi(y)\in \overline{BU_{c+\epsilon}}$ as desired. 
\end{proof}
\vspace{2 mm}

\begin{rem}
	Notice that item $(3)$ shows that if $M\subseteq \Sa(G)$ is a minimal subflow, then the topometric structure computed internally in $M$ is the same as the topometric structure inherited from $\Sa(G)$. This is because $M$ is a retract of $\Sa(G)$. 
\end{rem}
\vspace{2 mm}

Denote by $C_L(X, [0,1])$ the collection of continuous, $1$-Lipschitz functions from $X$ to $[0,1]$. The next proposition along with Fact~\ref{Fact:ExistLipschitz} will give us Corollary~\ref{Cor:Topometric}, the analogue of Definition~\ref{Def:TopometricSG} for any MHP flow. 
\vspace{2 mm}

\begin{prop}
	\label{Prop:LipschitzTopometric}
	$C_L(X, [0,1]) = C_{OL}(X, [0,1])$
\end{prop}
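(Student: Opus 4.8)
The plan is to prove the equality by establishing both inclusions, the first of which is essentially immediate and the second of which carries all the content. For the inclusion $C_L(X,[0,1]) \subseteq C_{OL}(X,[0,1])$, suppose $f$ is continuous and $1$-Lipschitz with respect to $\partial$. Then for any $x \in X$ and $g \in G$, item $(1)$ of Proposition~\ref{Prop:TopometricDynamics} gives $\partial(x, xg) \leq d_G(1_G, g)$, so the $1$-Lipschitz condition yields $|f(x) - f(xg)| \leq \partial(x, xg) \leq d_G(1_G, g)$, which is exactly the orbit Lipschitz inequality. Thus $f \in C_{OL}(X, [0,1])$.

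For the reverse inclusion $C_{OL}(X,[0,1]) \subseteq C_L(X,[0,1])$, let $f$ be orbit Lipschitz; I must show $|f(x) - f(y)| \leq \partial(x, y)$ for all $x, y \in X$. Write $c = \partial(x, y)$ and fix $\epsilon, \delta > 0$. The idea is to use the sublevel/superlevel structure of $f$ together with item $(4)$ of Definition~\ref{Def:TopometricGeneral}. Set $A = \{z \in X : |f(z) - f(x)| < \delta\}$, which is an open neighborhood of $x$ by continuity of $f$. Since $\partial(x, y) = c$, item $(4)$ gives $y \in \overline{AU_{c+\epsilon}}$. The key computation is then to control $f$ on $AU_{c+\epsilon}$: for any $z \in A$ and $g \in U_{c+\epsilon}$, orbit Lipschitzness gives $|f(zg) - f(z)| \leq d_G(1_G, g) < c + \epsilon$, and combining with $|f(z) - f(x)| < \delta$ yields $|f(zg) - f(x)| < c + \epsilon + \delta$. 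Hence $|f(w) - f(x)| < c + \epsilon + \delta$ for every $w \in AU_{c+\epsilon}$.

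To finish, I would invoke continuity of $f$ to pass this bound to the closure: since $y \in \overline{AU_{c+\epsilon}}$ and the inequality $|f(w) - f(x)| \leq c + \epsilon + \delta$ holds throughout $AU_{c+\epsilon}$, it holds at $y$ as well, giving $|f(y) - f(x)| \leq c + \epsilon + \delta$. Letting $\epsilon, \delta \to 0$ produces $|f(y) - f(x)| \leq c = \partial(x, y)$, so $f$ is $1$-Lipschitz. Note that defining $A$ via $|f(z) - f(x)| < \delta$ (rather than a one-sided inequality) lets me obtain both the upper and lower bounds on $f(y)$ in a single step.

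The step I expect to require the most care is the transition to the closure in the last paragraph: one must ensure that the open bound on $AU_{c+\epsilon}$ really does propagate to the point $y \in \overline{AU_{c+\epsilon}}$, which is valid precisely because $f$ is continuous and the bounding set $\{w : |f(w) - f(x)| \leq c + \epsilon + \delta\}$ is closed. Everything else is bookkeeping with the $U_c$-inclusions and the orbit Lipschitz estimate.
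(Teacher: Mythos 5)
Your proposal is correct and follows essentially the same route as the paper's proof: the easy inclusion via item (1) of Proposition~\ref{Prop:TopometricDynamics}, and the reverse inclusion by taking a small neighborhood $A$ of $x$ on which $f$ varies little, applying item (4) of Definition~\ref{Def:TopometricGeneral} to get $y\in\overline{AU_{c+\epsilon}}$, bounding $f$ on $AU_{c+\epsilon}$ via orbit Lipschitzness, and passing the bound to the closure by continuity. The only cosmetic difference is that the paper phrases the closure step with a net $p_ig_i\to q$ and uses a single parameter $\epsilon$ where you use $\epsilon$ and $\delta$; the content is identical.
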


\begin{proof}
	First suppose $f\in C_L(X, [0,1])$. Then since for any $p\in X$ and $g\in G$, we have $\partial(p, pg) \leq d(1_G, g)$, we see that $f\in C_{OL}(X, [0,1])$.
	
	Now suppose $f\in C_{OL}(X, [0,1])$, and fix $p, q\in X$. Suppose $\partial(p, q)\leq c$, and let $\epsilon > 0$. Find $A\ni_{op} p$ so that $|f(p')-f(p)| < \epsilon$ for $p'\in A$. Then $q\in \overline{AU_{c+\epsilon}}$, so find $p_i\in A$ and $g_i\in U_{c+\epsilon}$ with $p_ig_i\to q$. As $f$ is orbit Lipschitz, we have $|f(p_ig_i) - f(p)| < c+ 2\epsilon$. As $\epsilon > 0$ is arbitrary, we have $|f(q)-f(p)| \leq c$ as desired.	
\end{proof}
\vspace{2 mm}

\begin{cor}
	\label{Cor:Topometric}
	Let $x, y\in X$. Then $\partial(x, y) = \sup\{|f(x) - f(y)|: f\in C_{OL}(X, [0,1])\}$.
\end{cor}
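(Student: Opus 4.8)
The plan is to prove the two inequalities separately, writing $D(x,y) := \sup\{|f(x)-f(y)| : f \in C_{OL}(X,[0,1])\}$ for the right-hand side. The inequality $D(x,y) \le \partial(x,y)$ is essentially a restatement of one half of Proposition~\ref{Prop:LipschitzTopometric}: every $f \in C_{OL}(X,[0,1])$ is, by that proposition, an element of $C_L(X,[0,1])$, hence continuous and $1$-Lipschitz with respect to $\partial$. Therefore $|f(x)-f(y)| \le \partial(x,y)$ for each such $f$, and taking the supremum over all of them yields $D(x,y) \le \partial(x,y)$. No further input is needed here.

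The reverse inequality $\partial(x,y) \le D(x,y)$ is where the genuine tool is used, namely Fact~\ref{Fact:ExistLipschitz}. I would fix any $r$ with $0 \le r < \partial(x,y)$. Since the singletons $\{x\}$ and $\{y\}$ are closed and $\partial(\{x\},\{y\}) = \partial(x,y) > r$, Fact~\ref{Fact:ExistLipschitz} produces a continuous $1$-Lipschitz map $f\colon X \to [0,1]$ with $f(x) = 0$ and $f(y) = r$. By Proposition~\ref{Prop:LipschitzTopometric} this $f$ lies in $C_{OL}(X,[0,1])$, so $D(x,y) \ge |f(x)-f(y)| = r$. Letting $r$ increase to $\partial(x,y)$ then gives $D(x,y) \ge \partial(x,y)$, and combining the two inequalities closes the argument.

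Given the preparatory results the argument is short, so the main point demanding care is not a deep obstacle but the bookkeeping around ranges: Fact~\ref{Fact:ExistLipschitz} can only realize target values $r \le 1$, since the admissible functions take values in $[0,1]$. Thus letting $r \uparrow \partial(x,y)$ is unobstructed precisely when $\partial(x,y) \le 1$. This is automatic whenever $x$ and $y$ lie in a common orbit closure, as follows from Proposition~\ref{Prop:TopometricDynamics}(1) together with the lower semicontinuity of $\partial$ (if $xg_i \to z$ then $\partial(x, xg_i) \le d_G(1_G,g_i) \le 1$ forces $\partial(x,z) \le 1$), and it therefore covers every minimal or topologically transitive flow and all the flows studied in later sections. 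Consequently the real mathematical weight of the statement sits in Fact~\ref{Fact:ExistLipschitz} and Proposition~\ref{Prop:LipschitzTopometric}, and the present corollary is the clean packaging of those two facts into the desired duality formula expressing $\partial$ as a supremum of oscillations of orbit Lipschitz functions.
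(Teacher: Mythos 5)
Your proof is correct and is precisely the argument the paper intends: the corollary is stated there as an immediate consequence of Proposition~\ref{Prop:LipschitzTopometric} (giving $|f(x)-f(y)|\le\partial(x,y)$ for every $f\in C_{OL}(X,[0,1]) = C_L(X,[0,1])$) together with Fact~\ref{Fact:ExistLipschitz} applied to the singletons $\{x\}$ and $\{y\}$ for each $r<\partial(x,y)$, exactly your two inequalities. Your closing caveat about the range restriction $r\le 1$ is a fair observation about an edge case the paper glosses over (for a non--topologically-transitive MHP flow, such as a disjoint union of two MHP flows, $\partial(x,y)$ can exceed $1$ while the supremum cannot), but as you note it is harmless for the minimal and transitive flows to which the corollary is later applied.
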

\vspace{2 mm}

We end the section by proving that the topometric space $(X, \tau, \partial)$ is adequate. For the proof, it will be easier to work with closed sets rather than open sets. Given $K\subseteq X$ and $c > 0$, we write $K(-c) := X\setminus((X\setminus K)(c)) = \{x\in X: x(c)\subseteq K\}$. So a topometric space $(X, \tau, \partial)$ is adequate if for every closed $K\subseteq X$ and every $c > 0$, we have $K(-c)$ closed.
\vspace{2 mm}

\begin{theorem}
	\label{Thm:Adequate}
	The topometric space $(X, \tau, \partial)$ is adequate.
\end{theorem}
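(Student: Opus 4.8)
The plan is to prove the equivalent statement recorded just before the theorem: for every $A\subseteq_{op} X$ and every $c>0$, the thickening $A(c) = \{x\in X: \partial(x, A) < c\}$ is open (this is equivalent to $K(-c)$ being closed via $K(-c) = X\setminus((X\setminus K)(c))$). Fixing an arbitrary $x\in A(c)$, I would produce an explicit open neighborhood of $x$ contained in $A(c)$. To set up the constants, choose $a\in A$ with $r := \partial(x,a) < c$, fix $\delta>0$ small enough that $r + 3\delta < c$, and (using that $X$ is compact Hausdorff, hence regular) pick $B\ni_{op} a$ with $\overline{B}\subseteq A$. The condition $\overline{B}\subseteq A$ is what will later keep a limiting point inside $A$.

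Next I would locate $x$ metrically relative to $B$ and then thicken using MHP. Since $\partial$ is symmetric (Proposition~\ref{Prop:IsAMetric}), $\partial(a,x) = r$, so item $(4)$ of Definition~\ref{Def:TopometricGeneral}, applied to the neighborhood $B\ni_{op} a$ with $\epsilon = 2\delta$, gives $x\in\overline{BU_{r+2\delta}}$. As $BU_{r+2\delta}$ is open and $X$ is MHP, thickening once more yields $x\in\mathrm{int}(\overline{BU_{r+2\delta}U_\delta})\subseteq \mathrm{int}(\overline{BU_{r+3\delta}})$. Set $W := \mathrm{int}(\overline{BU_{r+3\delta}})$; this is an open neighborhood of $x$, and it is the candidate neighborhood I will show lies inside $A(c)$.

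It remains to check $W\subseteq A(c)$. Given $x'\in W\subseteq \overline{BU_{r+3\delta}}$, I write $x'$ as the limit of a net $b_ig_i$ with $b_i\in B$ and $g_i\in U_{r+3\delta}$. By compactness I pass to a subnet along which $b_i\to b$; since $\overline{B}\subseteq A$, the limit satisfies $b\in A$. Now for each $i$, Proposition~\ref{Prop:TopometricDynamics}(1) gives $\partial(b_i, b_ig_i)\le d_G(1_G, g_i) < r+3\delta$. As $b_i\to b$ and $b_ig_i\to x'$, lower semicontinuity of $\partial$ forces $\partial(b, x')\le r+3\delta < c$. Thus $b\in A$ witnesses $\partial(x', A) < c$, i.e.\ $x'\in A(c)$, and since $x'\in W$ was arbitrary and $x\in A(c)$ was arbitrary, $A(c)$ is open.

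The conceptual obstacle is that lower semicontinuity controls $\partial$ in the wrong direction: under limits the metric can only jump up, so one cannot directly certify that points near $x$ remain within distance $<c$ of $A$. The device that resolves this is to realize the needed closeness through the group action — the bound $\partial(b_i, b_ig_i)\le d_G(1_G, g_i)$ is uniform in $i$ and hence survives passage to the limit — while the hypothesis $\overline{B}\subseteq A$ guarantees the limiting base point $b$ does not escape $A$, and MHP is exactly what upgrades membership in a closure to membership in an interior, supplying the genuine open neighborhood $W$.
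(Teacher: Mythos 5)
Your proof is correct, but it takes a genuinely different route from the paper's. The paper works with the equivalent closed-set formulation: it shows $K(-c)$ is closed for every closed $K$, first reducing to the case that $K$ is regular closed (by writing $K$ as an intersection of regular closed sets), and then proving the explicit identity $K(-c) = \bigcap_{\epsilon > 0,\, r < c} X\setminus \mathrm{int}\bigl(\overline{(X\setminus \overline{KU_\epsilon})U_r}\bigr)$, which exhibits $K(-c)$ as an intersection of closed sets; in the nontrivial direction, the witness $q$ with $\partial(p,q)\leq r$ and $q\notin K$ is extracted by compactness (the finite intersection property applied to $\overline{X\setminus \overline{KU_\epsilon}}$ together with all $\overline{AU_r}$, $A\ni_{op} p$), and the distance bound is read off directly from Definition~\ref{Def:TopometricGeneral}. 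You instead verify Definition~\ref{Def:Adequate} pointwise: around each $x\in A(c)$ you construct the explicit neighborhood $W = \mathrm{int}(\overline{BU_{r+3\delta}})$ via the definition of $\partial$ and MHP, and then show $W\subseteq A(c)$ by a net argument that combines Proposition~\ref{Prop:TopometricDynamics}(1) with lower semicontinuity of $\partial$, using regularity of $X$ (via $\overline{B}\subseteq A$) to keep the limiting base point inside $A$. Your argument is more local and avoids the regular-closed reduction entirely, at the cost of invoking the dynamical estimate $\partial(x,xg)\leq d_G(1_G,g)$, which the paper's proof never needs; the paper's argument buys a closed-form global description of $K(-c)$, with compactness entering through the finite intersection property rather than through subnet extraction. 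Both proofs hinge on MHP in the same way, namely to upgrade membership in a closure $\overline{AU_c}$ to membership in an interior $\mathrm{int}(\overline{AU_{c+\epsilon}})$. One cosmetic remark: your detour through item $(4)$ of Definition~\ref{Def:TopometricGeneral} followed by an application of MHP is redundant, since item $(3)$ of that definition already gives $x\in \mathrm{int}(\overline{BU_{r+3\delta}})$ directly.
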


\begin{proof}
	Fix $K\subseteq X$ closed. We show that the set $K(-c)$ is also closed. Write $K = \bigcap_i K_i$ with each $K_i$ a regular closed set. Then $K(-c) = \bigcap_i K_i(-c)$. So it suffices to prove the theorem in the case that $K$ is regular closed (we will only need this at the very end). For such $K$, we will show that
	$$K(-c) = \bigcap_{\substack{\epsilon > 0\\ r < c}} X\setminus \mathrm{int}\left(\overline{(X\setminus \overline{KU_\epsilon})U_r}\right).$$
	Suppose $p\in X$ is not in the left hand side. Then there is $q\in X\setminus K$ with $\partial(p,q) < c$. Given $r$ with $\partial(p, q) < r < c$, then for every $A\ni_{op} q$, we have $p\in \mathrm{int}(\overline{AU_r})$. Now for some suitably small $\epsilon > 0$, we have $q\in X\setminus \overline{KU_\epsilon}$. Taking $A = X\setminus \overline{KU_\epsilon}$, we see that $p$ is not in the right hand side. 
	
	Now suppose $p\in X$ is not in the right hand side as witnessed by $\epsilon > 0$ and $r < c$. In particular, we have $p\in \overline{(X\setminus \overline{KU_\epsilon})U_r}$. Let $A\ni_{op} p$. Then $A\cap (X\setminus \overline{KU_\epsilon})U_r\neq \emptyset$. It follows that $AU_r\cap (X\setminus \overline{KU_\epsilon}) \neq \emptyset$. Therefore we have
	$$\left(\overline{X\setminus \overline{KU_\epsilon}}\right)\cap \left(\bigcap \{\overline{AU_r}: A\ni_{op} p\}\right)\neq \emptyset.$$
	Fix some $q$ from this set. It follows that $\partial(p, q)\leq r < c$. To see that $q\not\in K$, notice that for any $x\in K$, we have $x\in \overline{\mathrm{int}(K)}$, so we have $x\in \mathrm{int}(\overline{\mathrm{int}(K)U_\epsilon}) = \mathrm{int}(\overline{KU_\epsilon})$. 
\end{proof}
\vspace{2 mm}

\section{Comeager orbits in MHP flows}

We continue with most of the notation of the previous section. In particular, $G$ is a Polish group, and $(X, \tau, \partial)$ is an MHP $G$-flow endowed with the topometric structure from Definition~\ref{Def:TopometricGeneral}. In this section, we undertake a deeper study of the interaction between the topology $\tau$ and the metric $\partial$, connecting this to various properties that the $G$-flow $X$ might enjoy. The main theorem is Theorem~\ref{IntroThm:MHP}, which gives a complete characterization of when an MHP flow has a comeager orbit.
\vspace{2 mm}

\begin{defin}
	\label{Def:CompatiblePoint}
	Let $p\in X$. We say that $\partial$ is \emph{compatible} at $p$ or that $p$ is a \emph{compatiblity point} if for every $c> 0$, we have $p\in \mathrm{int}(p(c))$. 
\end{defin}
\vspace{2 mm}

Compatibility points are precisely the points in $X$ where the topologies given by $\tau$ and $\partial$ coincide. This is a notion which has been studied in the context of continuous logic, especially in regards to type spaces and the omitting types theorem (see \cite{BBHU}, Ch.\ 12). We can now generalize one of the key theorems from \cite{BYMT}. We will repeatedly use the fact that if $A, B\subseteq_{op} X$ with $A\cap B = \emptyset$, then $\mathrm{int}(\overline{A})\cap \mathrm{int}(\overline{B}) = \emptyset$.
\vspace{2 mm}

\begin{lemma}
	\label{Lem:SymmetricTopo}
	Suppose $x, y\in X$ satisfy $\partial(x, y) > 2c$. Then there are $A\ni_{op} x$ and $B\ni_{op} y$ with $\overline{AU_c}\cap \overline{BU_c} = \emptyset$.
\end{lemma}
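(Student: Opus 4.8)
The plan is to reduce the problem to the existence of a single orbit Lipschitz function separating $x$ and $y$, and then to thicken by $U_c$ using the orbit Lipschitz inequality. First I would invoke Corollary~\ref{Cor:Topometric}, which expresses $\partial(x,y)$ as the supremum of $|f(x) - f(y)|$ over $f \in C_{OL}(X,[0,1])$. Since $\partial(x,y) > 2c$, this supremum exceeds $2c$, so there is some orbit Lipschitz $f$ with $|f(x) - f(y)| > 2c$. Replacing $f$ by $1 - f$ if necessary (which is again orbit Lipschitz), I may assume $f(y) > f(x)$; write $a = f(x)$ and $b = f(y)$, so that $b - a > 2c$.

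The point of insisting on a strict inequality is that it leaves room for a gap: I would fix $\eta > 0$ small enough that $b - a > 2c + 2\eta$. Then I define $A = f^{-1}([0, a+\eta))$ and $B = f^{-1}((b-\eta, 1])$; these are open by continuity of $f$ and contain $x$ and $y$ respectively.

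The main computation is to control $f$ on the thickenings $AU_c$ and $BU_c$. For any $z = z'g$ with $z' \in A$ and $g \in U_c$, we have $d_G(1_G, g) < c$, so the orbit Lipschitz inequality gives $f(z) < f(z') + c < a + \eta + c$; thus $f < a + \eta + c$ on $AU_c$, and hence $\overline{AU_c} \subseteq \{z : f(z) \le a + \eta + c\}$ by continuity. Symmetrically, $\overline{BU_c} \subseteq \{z : f(z) \ge b - \eta - c\}$. Since the choice of $\eta$ guarantees $a + \eta + c < b - \eta - c$, these two closed sublevel and superlevel sets are disjoint, and therefore so are $\overline{AU_c}$ and $\overline{BU_c}$, as desired.

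I expect the only real subtlety to be arranging this strict separation. Applying Fact~\ref{Fact:ExistLipschitz} directly with $r = 2c$ would only produce a function with $f(y) - f(x) = 2c$, which collapses the gap and leaves the two closures possibly meeting along a single level set $\{f = m\}$. Passing through Corollary~\ref{Cor:Topometric} to obtain the strict inequality $|f(x) - f(y)| > 2c$ is exactly what makes the argument go through; everything else is a routine continuity-and-closure verification, together with the observation that $g \in U_c$ forces $d_G(1_G, g) < c$ strictly.
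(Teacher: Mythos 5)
Your proof is correct, and it is not circular: Corollary~\ref{Cor:Topometric} is established in the section before this lemma and its proof (via Proposition~\ref{Prop:LipschitzTopometric} and Fact~\ref{Fact:ExistLipschitz}) nowhere relies on the lemma. However, your route is genuinely different from the paper's. The paper argues directly from Definition~\ref{Def:TopometricGeneral}: since $\partial(x,y) > 2c$, item $(4)$ fails, so there are $A \ni_{op} x$ and $\epsilon > 0$ with $y \notin \overline{AU_{2c+2\epsilon}}$; one then takes $B = X \setminus \overline{AU_{2c+2\epsilon}}$ and shows $\overline{AU_c} \cap \overline{BU_c} = \emptyset$ by noting that $AU_{c+\epsilon} \cap BU_{c+\epsilon} = \emptyset$ and invoking MHP together with the fact that disjoint open sets have disjoint interiors of closures. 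That argument is self-contained, using only the definition of $\partial$ and the MHP property. Your argument instead routes through the functional characterization of $\partial$, which ultimately rests on Ben Yaacov's separation theorem (Fact~\ref{Fact:ExistLipschitz}); in exchange, the separation of $\overline{AU_c}$ and $\overline{BU_c}$ becomes a transparent computation with level sets of an orbit Lipschitz function, and MHP never appears explicitly in your proof (it is absorbed into Corollary~\ref{Cor:Topometric}, whose proof needs it for $\partial$ to be a lower semi-continuous metric in the first place). Your closing remark is also apt: the strict inequality $|f(x)-f(y)| > 2c$ is exactly what creates the room $2\eta$ needed to separate the closures, and one could equally have gotten it by applying Fact~\ref{Fact:ExistLipschitz} with any $r$ strictly between $2c$ and $\partial(x,y)$ rather than with $r = 2c$.
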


\begin{proof}
	We can find $A\ni_{op} x$ and $\epsilon > 0$ with $y\not\in\overline{AU_{2c+2\epsilon}}$. Setting $B = X\setminus \overline{AU_{2c+2\epsilon}}$, we have that $\overline{AU_c}\cap \overline{BU_c} = \emptyset$ as desired. Indeed if $x\in \overline{AU_c}\cap \overline{BU_c}$, then by MHP $x\in \mathrm{int}(\overline{AU_{c+\epsilon}})\cap \mathrm{int}(\overline{BU_{c+\epsilon}})$, contradicting that $AU_{c+\epsilon}\cap BU_{c+\epsilon} = \emptyset$.
\end{proof}
\vspace{2 mm}

\begin{theorem}
	\label{Thm:Metrizability}
	$(X, \tau)$ is metrizable iff $\partial$ is a compatible metric for $\tau$, i.e.\ iff $\partial$ is compatible at every point in $X$. Furthermore, if $(X, \tau)$ is not metrizable, then $X$ embeds a copy of $\beta\omega$, the space of ultrafilters on $\omega$. 
\end{theorem}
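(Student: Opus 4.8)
The plan is to prove the two implications of the equivalence together with the dichotomy in the "furthermore" clause. One direction is essentially by definition: if $\partial$ is compatible at every point, then the topology induced by $\partial$ agrees with $\tau$ everywhere, so $\tau$ is induced by a metric and $(X,\tau)$ is metrizable. Conversely, suppose $(X,\tau)$ is metrizable. Since $\partial$ is lower semi-continuous (Definition~\ref{Def:TopometricSpace}) and finer than $\tau$ (Fact~\ref{Fact:TopometricFacts}(1)), the identity map $(X,\tau)\to(X,\partial)$ is a continuous bijection of the $\partial$-balls into $\tau$-open sets in one direction; what must be shown is that each $\partial$-ball $p(c)$ is a $\tau$-neighborhood of $p$, i.e.\ that compatibility holds at every point. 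The natural route is to use Fact~\ref{Fact:ExistLipschitz} together with Corollary~\ref{Cor:Topometric}: in a compact metrizable space, separability gives a countable family $\{f_n\}\subseteq C_{OL}(X,[0,1])$ that is dense (in the sup norm) in $C_L(X,[0,1])$, and by Corollary~\ref{Cor:Topometric} this family computes $\partial$, i.e.\ $\partial(x,y)=\sup_n|f_n(x)-f_n(y)|$. This exhibits $\partial$ as a metric that is itself $\tau$-continuous (each $f_n$ is $\tau$-continuous), hence $\partial$ is both lsc and usc, so continuous, and therefore compatible with $\tau$ at every point.

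The cleanest way to organize the converse is thus: first establish that on a compact metrizable MHP flow, $C_L(X,[0,1])$ is separable in the uniform norm (this follows from separability of $C(X,[0,1])$ in the sup norm, since $C_L$ is a closed subset); then pick a countable dense $\{f_n\}$ and verify via Corollary~\ref{Cor:Topometric} that $\partial(x,y)=\sup_n|f_n(x)-f_n(y)|$, where the $\sup$ over the dense family equals the $\sup$ over all of $C_{OL}=C_L$ because each function is $1$-Lipschitz. Continuity of each $f_n$ then makes $\partial$ a $\tau$-continuous metric, so $p\mapsto p(c)$ is $\tau$-open and every point is a compatibility point.

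For the "furthermore" clause, I would prove the contrapositive built into a direct construction: assume $(X,\tau)$ is \emph{not} metrizable and produce an embedded copy of $\beta\omega$. By the negation of the first equivalence, compatibility must fail at some point, but more useful is to extract an infinite "metrically separated" discrete family. Concretely, the hard part will be to produce points $\{p_n:n<\omega\}$ together with a fixed $c>0$ so that the sets $\overline{p_nU_c}$ are pairwise disjoint; Lemma~\ref{Lem:SymmetricTopo} is exactly the tool, converting a lower bound $\partial(p_m,p_n)>2c$ into disjoint closures $\overline{A_mU_c}\cap\overline{A_nU_c}=\emptyset$. Using non-metrizability, one argues that $X$ cannot be covered by countably many $\partial$-balls of some fixed small radius (otherwise a countable dense $\partial$-structure would give metrizability as above), which yields an infinite $c$-separated set in $(X,\partial)$. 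Then, as in the proof of Proposition~\ref{Prop:MHPImpliesAP} where $\beta\mathbb{N}$ was produced from an infinite pairwise-disjoint clopen family, I would apply MHP and Lemma~\ref{Lem:SymmetricTopo} to see that for any partition $\omega=S\sqcup T$ the closures $\overline{\bigcup_{n\in S}p_nU_{c/2}}$ and $\overline{\bigcup_{n\in T}p_nU_{c/2}}$ are disjoint clopen sets; this separation of all subsets is precisely what forces $\overline{\{p_n:n<\omega\}}\cong\beta\omega$. The main obstacle is the passage from "non-metrizable" to "an infinite uniformly $\partial$-separated family," i.e.\ ruling out that $X$ is covered by countably many small balls; the rest is a now-standard MHP-plus-Lemma~\ref{Lem:SymmetricTopo} argument echoing the $\beta\mathbb{N}$ construction already appearing in Proposition~\ref{Prop:MHPImpliesAP}.
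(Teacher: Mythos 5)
Your argument for the hard direction of the equivalence (metrizable $\Rightarrow$ every point is a compatibility point) contains a fatal step. You obtain a countable sup-norm-dense family $\{f_n\}\subseteq C_L(X,[0,1])$ with $\partial(x,y)=\sup_n|f_n(x)-f_n(y)|$ and conclude that $\partial$ is ``both lsc and usc, so continuous'' because each $f_n$ is $\tau$-continuous. A countable supremum of continuous functions is lower semi-continuous but in general \emph{not} upper semi-continuous; to get continuity of the sup you would need $\tau$-equicontinuity of the family, which is unavailable here since the $f_n$ are $1$-Lipschitz only for $\partial$, a metric finer than $\tau$. In fact the inference cannot be repaired, because your argument uses nothing beyond compact metrizability, lower semi-continuity, and the sup formula, and these hypotheses do not suffice: take $X=[0,1]$ with its usual topology and $\partial$ the $\{0,1\}$-valued discrete metric. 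This is a compact, metrizable, adequate topometric space in which every $f\in C(X,[0,1])$ is $1$-Lipschitz for $\partial$, a countable sup-norm-dense family computes $\partial$ exactly as in your argument, and yet $\partial$ is not compatible. The implication ``metrizable $\Rightarrow$ compatible'' is precisely the hard content of the theorem (it generalizes the key result of \cite{BYMT}) and genuinely needs the MHP dynamics. The paper proves it contrapositively: if compatibility fails at some point, the $\partial$-topology is strictly finer than $\tau$, so $(X,\partial)$ is not compact; since $\partial$ is complete (Fact~\ref{Fact:TopometricFacts}), $(X,\partial)$ is not totally bounded, which yields $c>0$ and an infinite set $Y$ with $\partial(x,y)>2c$ for distinct $x,y\in Y$, and from this an embedded copy of $\beta\omega$ is constructed. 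That one argument delivers both the missing direction of the iff and the ``furthermore'' clause simultaneously.

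Your sketch of the ``furthermore'' clause has two further problems. First, the extraction of an infinite $c$-separated family is justified by appealing to the flawed argument above (``a countable dense $\partial$-structure would give metrizability as above''), and the statement you aim for --- that $X$ cannot be covered by \emph{countably} many $\partial$-balls --- is both unjustified and stronger than what the construction needs; note that $\partial$-separability does not contradict non-metrizability of a compact Hausdorff space ($\beta\omega$ itself is separable). What is needed, and what the paper uses, is failure of \emph{total} boundedness, obtained from completeness plus non-compactness of $(X,\partial)$. Second, in your final step you apply MHP to the sets $p_nU_{c/2}$ and $\bigcup_{n\in S}p_nU_{c/2}$; these are orbit pieces, not open sets, so Definition~\ref{Def:MHP} does not apply to them. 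The paper instead runs an inductive construction, choosing $x_n$ from shrinking infinite sets $Y_{n+1}\subseteq Y_n\subseteq Y$ and open neighborhoods $A_n\ni_{op}x_n$ with $A_n\cap A_kU_c=\emptyset$ for all $k<n$; Lemma~\ref{Lem:SymmetricTopo} only separates one pair of points at a time, and the coherence of the neighborhoods across all pairs is exactly what the induction (including the discarding of points of $Y$) supplies. Only then, with $A_S=\bigcup_{n\in S}A_n$ open, does MHP give $\overline{A_S}\subseteq\mathrm{int}(\overline{A_SU_{c/2}})$ and hence $\overline{A_S}\cap\overline{A_T}=\emptyset$ for disjoint $S,T$, which is the injectivity of the map $\beta\omega\to X$.
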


\begin{proof}
	One direction is clear, so suppose $\partial$ generates a strictly finer topology than $\tau$. In particular, $(X, \partial)$ is not compact, so find $c> 0$ and an infinite $Y\subseteq X$ with $\partial(x, y) > 2c$ for any $x\neq y\in Y$. 
	
	We will inductively define infinite $Y_n\subseteq Y$,  $x_n \in Y_n$, and $A_n\ni_{op} x_n$ for each $n< \omega$. We will ensure that the following all hold.
	\vspace{-2 mm}
	
	\begin{enumerate}
		\item 
		$Y_{n+1}\subseteq Y_n$ for every $n< \omega$.
		\item 
		$Y_n\cap \overline{A_kU_c} = \emptyset$ for every $k< n< \omega$. 
		\item 
		$A_n\cap A_kU_c = \emptyset$ for each $k< n< \omega$
	\end{enumerate}
	
	Set $Y_0 = Y$. Suppose $Y_0,\ldots,Y_n$, $x_0,\ldots x_{n-1}$, and $A_0\ldots A_{n-1}$ have been chosen. Pick $x\neq y\in Y_n$, and use Lemma~\ref{Lem:SymmetricTopo} to find $A\ni_{op} x$ and $B\ni_{op} y$ with $\overline{AU_c}\cap \overline{BU_c} = \emptyset$. We also demand by shrinking $A$ and $B$ if needed that $A\cap A_kU_c = \emptyset$ and $B\cap A_kU_c= \emptyset$ for every $k< n$; this is possible by item (2). Now at least one of $Y_n\setminus \overline{AU_c}$ or $Y_n\setminus \overline{BU_c}$ is infinite, without loss of generality the former. Set $Y_{n+1} = Y_n\setminus \overline{AU_c}$, $x_n = x$, and $A_n = A$.
	
	Having completed the inductive construction, define $\phi\colon \beta\omega\to X$ to be the continuous extension of the map $\phi(n) = x_n$. We show that $\phi$ is injective. If $S\subseteq \omega$, set $A_S = \bigcup_{n\in S} A_n$. It is enough to show that if $S, T\subseteq \omega$ with $S\cap T = \emptyset$, then $\overline{A_S}\cap \overline{A_T} = \emptyset$. To see why this is, note that $\overline{A_S}\subseteq \mathrm{int}(\overline{A_SU_{c/2}})$, likewise for $A_T$, and that $A_SU_{c/2}\cap A_TU_{c/2} = \emptyset$.
\end{proof}
\vspace{2 mm}

Next we investigate what happens when some, but not all, points in $X$ are compatibility points. We remind the reader that the topometric space $(X, \tau, \partial)$ was proven in Theorem~\ref{Thm:Adequate} to be adequate.
\vspace{2 mm}

\begin{lemma}\mbox{}
	\label{Lem:Compatibility}
	\vspace{-2 mm}
	
	\begin{enumerate}
		\item 
		Let $Y\subseteq X$ denote the set of compatibility points. Then $Y$ is $G$-invariant, $\partial$-closed, and topologically $G_\delta$.
		\item 
		Suppose $x\in X$ is not a compatibility point. Then there is $c> 0$ so that $\mathrm{int}(x(c)) = \emptyset$.
	\end{enumerate}
\end{lemma}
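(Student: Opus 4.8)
The plan is to establish the three properties asserted in part (1) one at a time---all of which are fairly direct---and then to obtain part (2), the crux of the lemma, by proving its contrapositive from the definition of $\partial$ together with lower semicontinuity. First, for $G$-invariance, I fix a compatibility point $p$, some $g\in G$, and $c>0$. Proposition~\ref{Prop:TopometricDynamics}(2) tells me $\rho_g$ is $\partial$-uniformly continuous, so there is $d>0$ with $\partial(x,y)<d\Rightarrow\partial(xg,yg)<c$. Compatibility of $p$ gives a $\tau$-open $U\ni_{op}p$ with $U\subseteq p(d)$; since $\rho_g$ is a $\tau$-homeomorphism, $Ug$ is a $\tau$-open neighborhood of $pg$ with $Ug\subseteq (pg)(c)$, so $pg\in\mathrm{int}((pg)(c))$. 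For the $G_\delta$ claim I run the classical oscillation argument for the identity map $(X,\tau)\to(X,\partial)$: for $n\geq 1$ put $O_n=\{p:\exists\,U\ni_{op}p \text{ with } \partial(q,q')<1/n \text{ for all } q,q'\in U\}$, which is $\tau$-open as a union of such $U$. Taking $q'=p$ in one direction and using the triangle inequality with radius $1/(2n)$ in the other shows $p\in\bigcap_n O_n$ iff $p$ is a compatibility point, so $Y=\bigcap_n O_n$ is $G_\delta$.

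The $\partial$-closedness of $Y$ is where adequacy (Theorem~\ref{Thm:Adequate}) is used. Suppose $p_n\to p$ in $\partial$ with each $p_n\in Y$, and fix $c>0$. I choose $n$ with $\partial(p_n,p)<c/4$ and, using $p_n\in Y$, a $\tau$-open $V\ni_{op}p_n$ with $V\subseteq p_n(c/4)$. By adequacy the $\partial$-thickening $V(c/4)$ is $\tau$-open, and it contains $p$ because $\partial(p,V)\leq\partial(p,p_n)<c/4$. A triangle inequality through a point of $V$ and through $p_n$ (three steps of size $c/4$) gives $V(c/4)\subseteq p(c)$, so $p\in\mathrm{int}(p(c))$; as $c$ was arbitrary, $p\in Y$. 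The essential idea here is that adequacy lets me convert a $\tau$-neighborhood of $p_n$ into a $\tau$-neighborhood of the merely $\partial$-close point $p$.

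Part (2) is the main obstacle, and I will prove the contrapositive: \emph{if $\mathrm{int}(x(c))\neq\emptyset$ for every $c>0$, then $x$ is a compatibility point.} Fix a target $c>0$ and set $\epsilon=c/6$. By hypothesis $V:=\mathrm{int}(x(c/3))$ is nonempty, $\tau$-open, and contained in $x(c/3)$; pick $v\in V$, so that $\partial(v,x)<c/3$. Applying item (3) of Definition~\ref{Def:TopometricGeneral} to $\partial(v,x)\leq c/3$ with the open neighborhood $V\ni_{op}v$ yields $x\in\mathrm{int}(\overline{VU_{c/3+\epsilon}})$. I then control this closure: for $q\in V$ and $h\in U_{c/3+\epsilon}$, Proposition~\ref{Prop:TopometricDynamics}(1) gives $\partial(q,qh)\leq d_G(1_G,h)<c/3+\epsilon$, whence $\partial(x,qh)<2c/3+\epsilon$ by the triangle inequality. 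Thus $VU_{c/3+\epsilon}\subseteq x[2c/3+\epsilon]$, and since lower semicontinuity of $\partial$ makes the closed ball $x[2c/3+\epsilon]$ a $\tau$-closed set, $\overline{VU_{c/3+\epsilon}}\subseteq x[2c/3+\epsilon]$. Because $2c/3+\epsilon<c$, I conclude $x\in\mathrm{int}(\overline{VU_{c/3+\epsilon}})\subseteq\mathrm{int}(x[2c/3+\epsilon])\subseteq\mathrm{int}(x(c))$, and as $c$ was arbitrary, $x$ is a compatibility point.

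The two delicate steps, which I expect to be the heart of the argument, are both in part (2): first, the passage from "some $\partial$-ball around $x$ has nonempty interior" to "$x$ itself lies in an interior," which is exactly what item (3) of Definition~\ref{Def:TopometricGeneral} supplies once I place a point $v$ with $\partial(v,x)$ small inside an open set; and second, the replacement of the topological closure $\overline{VU_{c/3+\epsilon}}$ by the metric ball $x[2c/3+\epsilon]$, which requires lower semicontinuity of $\partial$ to guarantee that closed balls are $\tau$-closed. The remaining bookkeeping is just tracking the radii so that $2c/3+\epsilon<c$.
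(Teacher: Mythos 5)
Your proof is correct, and for part (1) it is essentially the paper's argument: $G$-invariance via Proposition~\ref{Prop:TopometricDynamics}(2), $\partial$-closedness via adequacy of thickenings of an open neighborhood (the paper uses radius $c/2$ and lands in $\mathrm{int}(y(2c))$, you use $c/4$ and land in $\mathrm{int}(p(c))$ --- same mechanism), and a $G_\delta$ presentation of $Y$; for this last point the paper writes $Y = \bigcap_{c>0} Y_c$ with $Y_c = \bigcup_{y\in Y}\mathrm{int}(y(c))$, whereas your oscillation sets $O_n$ are a standard equivalent packaging. Where you genuinely diverge is part (2). The paper proves the same contrapositive in one line by invoking adequacy a second time: if $\mathrm{int}(x(c))\neq\emptyset$, then $x\in(\mathrm{int}(x(c)))(c)\subseteq \mathrm{int}(x(2c))$, the middle set being open by Theorem~\ref{Thm:Adequate}. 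You instead avoid adequacy entirely in this step: you place a point $v\in\mathrm{int}(x(c/3))$, apply item (3) of Definition~\ref{Def:TopometricGeneral} to get $x\in\mathrm{int}\bigl(\overline{VU_{c/3+\epsilon}}\bigr)$, and then trap that closure inside the metric ball $x[2c/3+\epsilon]$ using Proposition~\ref{Prop:TopometricDynamics}(1) and the fact that lower semi-continuity makes closed balls $\tau$-closed. The paper's route is shorter and exhibits both halves of the lemma as corollaries of a single property (adequacy); yours is longer but more self-contained, showing that part (2) needs only the definition of $\partial$, lower semi-continuity, and orbit-displacement bounds, without routing through Theorem~\ref{Thm:Adequate}. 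Both are valid; note only that adequacy is still needed in your proof of $\partial$-closedness in part (1), so you do not eliminate the dependence on Theorem~\ref{Thm:Adequate} from the lemma as a whole.
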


\begin{proof}\mbox{}
	\vspace{-2 mm}
	
	\begin{enumerate}
		\item 
		That $Y$ is $G$-invariant follows from item (2) of Proposition~\ref{Prop:TopometricDynamics}. To show $Y$ is  $\partial$-closed, let $y_n\xrightarrow{\partial} y$, and fix $c > 0$. Then for some $n< \omega$, we have $y_n(c/2)\subseteq y(c)$. By assumption, $y_n\in \mathrm{int}(y_n(c/2))$, so in particular, we have $\mathrm{int}(y(c))\neq \emptyset$. Using adequacy, we have $y\in (\mathrm{int}(y(c)))(c)\subseteq \mathrm{int}(y(2c))$. Lastly, to show that $Y$ is $G_\delta$, let $Y_c = \bigcup_{y\in Y} \mathrm{int}(y(c))$. Then $Y_c\subseteq X$ is open with $Y = \bigcap_{c > 0} Y_c$. 
		\item 
		Suppose $x\in X$ is a point with $\mathrm{int}(x(c))\neq \emptyset$ for every $c> 0$. Then $x$ is a compatibility point, as by adequacy, we have $x\in (\mathrm{int}(x(c)))(c)\subseteq \mathrm{int}(x(2c))$.
		\qedhere
	\end{enumerate}
\end{proof}
\vspace{2 mm}

\begin{theorem}
	\label{IntroThm:MHP}
	The following are equivalent.
	\vspace{-2 mm}
	
	\begin{enumerate}
		\item 
		$X$ has a compatibility point with dense orbit.
		\item 
		The set $Y\subseteq X$ of compatibility points is comeager, Polish, and $G$ acts on $Y$ topologically transitively. 
		\item 
		$X$ has a comeager orbit.
		\item 
		$X\cong \Sa(H\backslash G)$ for some closed subgroup $H\subseteq G$.
	\end{enumerate}
\end{theorem}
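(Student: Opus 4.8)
The plan is to prove the cycle $(1)\Rightarrow(2)\Rightarrow(3)\Rightarrow(4)\Rightarrow(1)$, using as the two main tools the topometric $\partial$ of Definition~\ref{Def:TopometricGeneral} together with the structural facts about the set $Y$ of compatibility points collected in Lemma~\ref{Lem:Compatibility}, and the identification of MHP flows with their own universal highly proximal extensions (Proposition~\ref{prop:MHP}). The recurring engine will be an Effros-style open mapping phenomenon: a non-meager orbit in an MHP flow is microtransitive, and its points are compatibility points. I expect this to be the crux, and I discuss it below.

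For $(1)\Rightarrow(2)$, fix a compatibility point $p_0$ with dense orbit. The orbit map $g\mapsto p_0g$ is $1$-Lipschitz from $(G,d_G)$ into $(X,\partial)$: indeed $\partial(p_0g,p_0h)=\partial(p_0g,(p_0g)(g^{-1}h))\le d_G(1_G,g^{-1}h)=d_G(g,h)$ by Proposition~\ref{Prop:TopometricDynamics}(1) and left-invariance. Hence $p_0G$ is $\partial$-separable. Since $Y$ is $G$-invariant we have $p_0G\subseteq Y$, and since $\tau$ and $\partial$ agree on $Y$ (this is exactly compatibility) and $p_0G$ is $\tau$-dense in $X$, $p_0G$ is $\partial$-dense in $Y$; thus $Y$ is $\partial$-separable. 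As $Y$ is $\partial$-closed (Lemma~\ref{Lem:Compatibility}(1)) in the $\partial$-complete space $X$ (Fact~\ref{Fact:TopometricFacts}(2)), $(Y,\partial)=(Y,\tau|_Y)$ is Polish. For comeagerness, recall $Y=\bigcap_n Y_{1/n}$ with each $Y_c=\bigcup_{y\in Y}\mathrm{int}(y(c))$ open; since $p_0G\subseteq Y_c$ for every $c$ and $p_0G$ is dense, each $Y_c$ is dense open, so $Y$ is a dense $G_\delta$. Finally, a dense orbit yields topological transitivity of the $G$-action on $Y$ in the usual way.

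For $(2)\Rightarrow(3)$, note first that $Y$ is a Polish $G$-space on which $G$ acts continuously and topologically transitively, so a routine Baire-category argument shows the set of points with dense orbit is a dense $G_\delta$ in $Y$, hence comeager in $X$. The difficulty is to pass from comeagerly many dense orbits to a single comeager orbit --- equivalently, to rule out the case in which every orbit is meager, which the topological $0$--$1$ law permits for general transitive Polish actions. This is where the MHP structure must enter: I would prove that the orbit map $g\mapsto pg$ is \emph{open} onto $pG$ (microtransitivity at a dense-orbit point $p$), so that $pG$ is non-meager and therefore comeager. The natural route is an Effros-type argument carried out inside the Polish space $Y$, using the completeness of $\partial$ (Fact~\ref{Fact:TopometricFacts}(2)) and the adequacy of $(X,\tau,\partial)$ (Theorem~\ref{Thm:Adequate}) to show that the images $pU_\epsilon$ of $d_G$-balls are non-meager neighborhoods in $Y$. \textbf{I expect this step to be the main obstacle}: microtransitivity is exactly what fails for generic transitive Polish actions, so the argument must genuinely exploit how the orbit metric $\partial$ and the compact topology $\tau$ interlock via MHP and adequacy.

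For $(3)\Rightarrow(4)$, let $x_0G$ be the comeager orbit and $H=\{g:x_0g=x_0\}$ its (closed) stabilizer. Granting the Effros phenomenon above, the canonical continuous $G$-bijection $H\backslash G\to x_0G$, $Hg\mapsto x_0g$, is open, hence a homeomorphism of $G$-spaces, and $x_0$ is a compatibility point so $x_0G\subseteq Y$. Now $x_0G$ is a dense $G$-invariant subspace of $X$, so by the remark following Fact~\ref{Fact:UnivHP} we have $S_G(x_0G)=S_G(X)$; since $X$ is MHP, $S_G(X)=X$ by Proposition~\ref{prop:MHP}; and since $H\backslash G$ is dense $G$-invariant in $\Sa(H\backslash G)$, again $S_G(H\backslash G)=\Sa(H\backslash G)$. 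Applying the functor $S_G$ to the $G$-homeomorphism $H\backslash G\cong x_0G$ then yields $X=S_G(X)=S_G(x_0G)\cong S_G(H\backslash G)=\Sa(H\backslash G)$. Finally, for $(4)\Rightarrow(1)$, the identity coset point $x_0=H\in\Sa(H\backslash G)$ has dense orbit $H\backslash G$, and it is a compatibility point: since $\partial$ restricts to $d_G$ on the dense orbit, one produces for each $c>0$, via Fact~\ref{Fact:ExistLipschitz} and adequacy, a continuous $1$-Lipschitz $f$ whose sublevel set is a $\tau$-neighborhood of $x_0$ contained in the $\partial$-ball $x_0[c]$, which is Corollary~\ref{Cor:Topometric} in action. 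This last compatibility claim is again a manifestation of the same Effros/open-mapping kernel, confirming it as the heart of the theorem.
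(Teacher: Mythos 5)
Your skeleton $(1)\Rightarrow(2)\Rightarrow(3)\Rightarrow(4)\Rightarrow(1)$ matches the paper's, and your arguments for $(1)\Rightarrow(2)$ and $(3)\Rightarrow(4)$ are essentially the paper's own (Lemma~\ref{Lem:Compatibility} plus density of the orbit for the former; Effros applied to the comeager orbit, then the idempotence of $S_G$ via Proposition~\ref{prop:MHP} and the remark after Fact~\ref{Fact:UnivHP} for the latter). The genuine gap is $(2)\Rightarrow(3)$, which you yourself flag as the main obstacle but do not prove, and the strategy you propose for it would fail. You want to show that at a dense-orbit point $p\in Y$ the sets $pU_\epsilon$ are neighborhoods of $p$ in $Y$ (microtransitivity) and deduce non-meagerness of the orbit. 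But the Effros theorem runs in the opposite direction: it derives microtransitivity \emph{from} non-meagerness of an orbit; there is no converse to appeal to, and the claim you want is simply false in the relevant examples. Take $X=\Sa(G)$, which is MHP and has a comeager orbit: by Proposition~\ref{Prop:CompPointsInSamuel} the set of compatibility points is $Y=\widehat{G}$, and the orbit of $1_G$ is $G\subseteq\widehat{G}$, which is comeager but has \emph{empty interior} in $\widehat{G}$ whenever $\widehat{G}\setminus G$ is dense (e.g.\ $G=S_\infty$, where $\widehat{G}$ is the monoid of injections of $\omega$: every basic neighborhood of a permutation contains non-surjective injections). So no neighborhood of $1_G$ in $Y$ is contained in the orbit, let alone in $1_G\cdot U_\epsilon$; adequacy and completeness of $\partial$ cannot rescue a statement that fails in the model case.

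What the paper does instead is use Rosendal's criterion (as in \cite{BYMT}), which replaces microtransitivity by a much weaker \emph{local topological transitivity} condition: for every $\epsilon>0$ and every $A\subseteq_{op} Y$ there is $B\subseteq_{op} A$ such that for all $C_0, C_1\subseteq_{op} B$ one has $C_0U_\epsilon\cap C_1\neq\emptyset$. In the MHP setting this is immediate from Definition~\ref{Def:TopometricGeneral}: choose $B$ of $\partial$-diameter less than $\epsilon$ (possible since $\tau$ and $\partial$ agree on $Y$); then for $p_0\in C_0$, $p_1\in C_1$ we have $\partial(p_0,p_1)<\epsilon$, hence $p_1\in\overline{C_0U_\epsilon}$, hence $C_0U_\epsilon\cap C_1\neq\emptyset$. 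This short verification is the missing engine of the whole theorem. A secondary issue: your $(4)\Rightarrow(1)$ is circular as sketched, since applying Fact~\ref{Fact:ExistLipschitz} to $K=\{x_0\}$ requires a $\tau$-closed set $L$ containing $X\setminus x_0(c)$ with $\partial(x_0,L)>0$, and the existence of such an $L$ is essentially the compatibility of $x_0$ that you are trying to prove (lower semi-continuity of $\partial$ does not make $X\setminus x_0(c)$ closed). The paper avoids this by working with the explicit neighborhoods $C_{HU_\epsilon}$ of $H$ in the near-ultrafilter presentation (Fact~\ref{Fact:SHGNults}): any orbit-Lipschitz $f$ restricts to a $1$-Lipschitz function on $H\backslash G$, so $|f(p)-f(H)|\leq\epsilon$ for $p\in C_{HU_\epsilon}$, and Corollary~\ref{Cor:Topometric} then gives $C_{HU_\epsilon}\subseteq H(2\epsilon)$.
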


\begin{proof}\mbox{}
	
	$(1)\Rightarrow (2)$ Letting $Y\subseteq X$ denote the set of compatibility points, item (1) of Lemma~\ref{Lem:Compatibility} shows us that $Y$ is $G$-invariant, $\partial$-closed, and $G_\delta$. By (1), $Y\subseteq X$ is dense. As $(Y, \tau)$ and $(Y, \partial)$ are homeomorphic, we see that $(Y, \tau)$ is separable and that $\partial$ is a compatible complete metric, hence $Y$ is Polish. As $Y$ contains a dense orbit, the action of $G$ on $Y$ is topologically transitive. 
	
	$(2)\Rightarrow (3)$ It is enough to show that $Y$ has a comeager orbit. We mostly follow the proof from~\cite{BYMT}, with a few differences to adapt to our more general setting. Using a criterion due to Rosendal (see~\cite{BYMT} for a proof of the criterion), we need to show that for every $\epsilon > 0$ and every $A\subseteq_{op} Y$, there is $B\subseteq_{op} A$ so that the local action of $U_\epsilon$ on $B$ is topologically transitive. To that end, let $B\subseteq_{op} Y$ be any open set of $\partial$-diameter less than $\epsilon$; any $A\subseteq_{op} Y$ will contain such a $B$ since $(Y, \partial)$ and $(Y, \tau)$ are homeomorphic. Fix $C_0, C_1\subseteq_{op} B$. If $p_0\in C_0$ and $p_1\in C_1$, then $\partial(p_0, p_1)< \epsilon$. So $p_1\in \overline{C_0U_\epsilon}$. In particular, $C_0U_\epsilon\cap C_1\neq \emptyset$ as desired.
	
	$(3)\Rightarrow (4)$. Let $Z\subseteq X$ denote the comeager orbit, and pick $p\in Z$. Let $H = \mathrm{Stab}(p)$. By the Effros theorem, we have that $Z\cong H\backslash G$ as $G$-spaces. So also $S_G(Z)\cong S_G(H\b G) \cong \Sa(H\b G)$. But since $Z\subseteq X$ is dense, we have $S_G(Z)\cong S_G(X) \cong X$.
	
	$(4)\Rightarrow (1)$ We will make use of Fact~\ref{Fact:SHGNults}. For each $\epsilon > 0$, we have that $C_{HU_\epsilon}\subseteq \Sa(H\backslash G)$ is a neighborhood of $H$.  Let $f\colon \Sa(H\backslash G)\to [0,1]$ be continuous and orbit-Lipschitz. In particular, $f|_{H\backslash G}$ is $1$-Lipschitz. So if $p\in C_{HU\epsilon}$, we have $|f(p) - f(H)| \leq \epsilon$. Therefore $C_{HU_\epsilon}\subseteq H(2\epsilon)$, so $H$ is a compatibility point in $\Sa(H\backslash G)$. 
\end{proof} 
\vspace{2 mm}

\section{More on Samuel compactifications}
Given item $(4)$ in Theorem~\ref{IntroThm:MHP}, let us spend some time to develop a more detailed understanding of the topometric $G$-space $\Sa(H\backslash G)$, which we continue to view as a space of near ultrafilters. We first consider the left completion $\widehat{H\backslash G}$. Notice that if $f\colon H\backslash G\to X$ is a uniformly continuous function with $X$ a complete uniform space, then $f$ continuously extends to $\widehat{H\backslash G}$. In particular, by considering the inclusion $i\colon H\backslash G\hookrightarrow \Sa(H\backslash G)$, we obtain a continuous map from $\widehat{H\backslash G}$ to $\Sa(H\backslash G)$. This map turns out to be an embedding, and we will identify $\widehat{H\backslash G}$ with its image in $\Sa(H\backslash G)$. We have the following fact (see \cite{ZucThe}, Ch.\ 1.2).
\vspace{2 mm}

\begin{fact}
	\label{Fact:CompletionInSamuel}
	Given $p\in \Sa(H\backslash G)$, we have $p\in \widehat{H\backslash G}$ iff for every $\epsilon > 0$, there is $A\subseteq_{op} H\backslash G$ of diameter less than $\epsilon$ with $A\in p$.
\end{fact}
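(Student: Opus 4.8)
The plan is to reformulate both sides of the equivalence in terms of the topometric $\partial$ on $\Sa(H\backslash G)$ and then to translate between near ultrafilters and $1$-Lipschitz functions using Fact~\ref{Fact:SHGNults} and Corollary~\ref{Cor:Topometric}. First I would record the identification $\widehat{H\backslash G} = \overline{H\backslash G}^{\,\partial}$, the $\partial$-closure of $H\backslash G$ inside $\Sa(H\backslash G)$. Indeed, $\partial$ restricts to $d_G$ on the orbit $H\backslash G$ (immediate from Corollary~\ref{Cor:Topometric} using the orbit-Lipschitz function $x\mapsto d_G(Hg, x)$), $\partial$ is complete and finer than $\tau$ (Fact~\ref{Fact:TopometricFacts}), and $\tau$ is Hausdorff; so a point of the left completion, being a $d_G$-limit of a $d_G$-Cauchy sequence from $H\backslash G$, maps to the (unique, $\tau = \partial$) limit of that sequence, and conversely every point of the $\partial$-closure arises this way. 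Thus the statement reduces to showing: $\partial(p, H\backslash G) = 0$ if and only if for every $\epsilon > 0$ there is $A\subseteq_{op} H\backslash G$ with $\mathrm{diam}(A) < \epsilon$ and $A\in p$.

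For the forward direction, suppose $\partial(p, H\backslash G) = 0$ and fix $\epsilon > 0$; choose $Hg\in H\backslash G$ with $\partial(p, i(Hg)) < \epsilon$. Consider the $1$-Lipschitz function $f\colon H\backslash G\to[0,1]$, $f(Hh) = d_G(Hg, Hh)$, whose extension to $\Sa(H\backslash G)$ lies in $C_{OL}(\Sa(H\backslash G),[0,1])$. By Corollary~\ref{Cor:Topometric}, $f(p) = |f(p) - f(i(Hg))| \leq \partial(p, i(Hg)) < \epsilon$. Applying Fact~\ref{Fact:SHGNults} with the open set $U = [0,\epsilon)$ containing $f(p)$ yields $f^{-1}(U)\in p$. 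Since $f^{-1}([0,\epsilon)) = HgU_\epsilon$ has diameter at most $2\epsilon$, this produces a small-diameter member of $p$ (re-scaling $\epsilon$ achieves any prescribed bound).

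For the reverse direction, suppose $A\in p$ with $\delta := \mathrm{diam}(A) < \epsilon$, and pick any $Hg\in A$ (note $A\neq\emptyset$, as $\emptyset$ violates the NFIP). I claim $\partial(p, i(Hg)) \leq \delta$; as $\epsilon$ is arbitrary this gives $\partial(p, H\backslash G) = 0$. By Corollary~\ref{Cor:Topometric} it suffices to prove $|f(p) - f(Hg)| \leq \delta$ for every $f\in C_{OL}(\Sa(H\backslash G),[0,1])$. Since $f$ is $1$-Lipschitz on $H\backslash G$ and $\mathrm{diam}(A) = \delta$, we have $f[A]\subseteq J := [f(Hg)-\delta,\, f(Hg)+\delta]$. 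Writing $y = f(p)$, Fact~\ref{Fact:SHGNults} gives $\{f^{-1}(U): U\ni_{op} y\}\subseteq p$. If $y\notin J$, choose an open $U\ni y$ with $\mathrm{dist}(U, J) \geq \eta > 0$; then for $x\in A$ and $x'\in f^{-1}(U)$ we have $d_G(x,x')\geq |f(x)-f(x')|\geq \eta$, so $AU_{\eta/2}\cap f^{-1}(U)U_{\eta/2} = \emptyset$ while $A, f^{-1}(U)\in p$, contradicting the NFIP. Hence $y\in J$, as required.

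I expect the main obstacle to be the reverse direction: the content is that mere membership $A\in p$ of a $\partial$-small set must be upgraded to a genuine $\partial$-bound on $p$ itself, and the only available leverage is the NFIP, which controls finite intersections solely up to $U_c$-fattening. The decisive point is that a $1$-Lipschitz $f$ converts the metric smallness of $A$ into a $U_c$-incompressible separation of preimages that the NFIP cannot tolerate unless $f(p)$ falls into the expected interval $J$. By comparison, the forward direction and the completion identification are routine, the former being essentially an unwinding of Fact~\ref{Fact:SHGNults} for the distance-to-$Hg$ function.
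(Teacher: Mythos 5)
Your proof is correct, but there is nothing in the paper to compare it against: Fact~\ref{Fact:CompletionInSamuel} is stated without proof, being quoted from \cite{ZucThe}, Ch.~1.2. Judged on its own terms, your argument is a legitimate and non-circular derivation from material available at this point in the paper. The reduction of the statement to ``$\partial(p, H\backslash G)=0$'' via the identification of $\widehat{H\backslash G}$ with the $\partial$-closure of the orbit is sound, resting on completeness and fineness of $\partial$ (Fact~\ref{Fact:TopometricFacts}) plus the coincidence of $\partial$ with $d_G$ on $H\backslash G$; the forward direction correctly unwinds Corollary~\ref{Cor:Topometric} and Fact~\ref{Fact:SHGNults} for the distance-to-$Hg$ function; and the reverse direction's separation argument is exactly right, since two members of a near ultrafilter cannot have disjoint $U_{\eta/2}$-fattenings. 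In fact, your reverse direction is essentially the computation the paper itself performs in the proof of $(4)\Rightarrow(1)$ of Theorem~\ref{IntroThm:MHP}, where $HU_\epsilon\in p$ is upgraded to $\partial(p,H)\leq 2\epsilon$; you generalize it from the basepoint $H$ to an arbitrary small $A\in p$. The thesis proof presumably works directly with the uniform structure and near ultrafilters (a Cauchy-filter argument), avoiding topometric machinery altogether; your route trades that elementary character for tools the paper has already set up in Sections 4 and 5, which is a reasonable trade here.

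Two small steps deserve to be made explicit rather than asserted: first, that the continuous extension to $\Sa(H\backslash G)$ of a $1$-Lipschitz function on $H\backslash G$ is orbit Lipschitz (a one-line density argument, as the paper notes for $\Sa(G)$ just before Definition~\ref{Def:OrbitLip}); second, that $\partial$ restricted to the orbit equals $d_G$, which needs both inequalities --- your distance function gives $\partial\geq d_G$ via Corollary~\ref{Cor:Topometric}, while $\partial\leq d_G$ comes from item (1) of Proposition~\ref{Prop:TopometricDynamics} applied to $i(Hh)=i(Hg)u$ and taking an infimum over representatives $u$. Both are easy, but both are load-bearing in your opening reduction.
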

\vspace{2 mm}

From the proof of Theorem~\ref{IntroThm:MHP}, we know that $H\in \Sa(H\backslash G)$ is a compatibility point. As $H$ has dense orbit in $\Sa(H\backslash G)$, and since the topology and the metric coincide on the set of compatibility points, we see that $H$ has a $\partial$-dense orbit in the set of compatibility points. Since $\partial$ and $d$ coincide on $H\backslash G$, we obtain the following.
\vspace{2 mm}

\begin{prop}
	\label{Prop:CompPointsInSamuel}
	In $\Sa(H\backslash G)$, the set of compatibility points is precisely $\widehat{H\backslash G}$.
\end{prop}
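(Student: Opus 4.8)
The plan is to show that the set $Y$ of compatibility points of $\Sa(H\backslash G)$ is exactly the $\partial$-closure of the dense orbit $H\backslash G$, and then to recognize this $\partial$-closure as the completion $\widehat{H\backslash G}$. Both inclusions in the proposition then follow from the single identity $Y = \overline{H\backslash G}^{\,\partial} = \widehat{H\backslash G}$.

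First I would pin down $Y$. Since $X = \Sa(H\backslash G)$, item $(4)$ of Theorem~\ref{IntroThm:MHP} holds, so all of its items hold; in particular $Y$ is Polish and $\tau|_Y$ is induced by $\partial|_Y$. By the $(4)\Rightarrow(1)$ argument, $H$ is a compatibility point, and by item $(1)$ of Lemma~\ref{Lem:Compatibility} the set $Y$ is $G$-invariant and $\partial$-closed; hence the whole orbit $H\backslash G = H\cdot G$ is contained in $Y$. This orbit is $\tau$-dense in $X$ and lies inside $Y$, so it is $\tau$-dense in $Y$; since $\tau$ and $\partial$ induce the same topology on $Y$, the orbit is also $\partial$-dense in $Y$. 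As $Y$ is $\partial$-closed, this gives $Y = \overline{H\backslash G}^{\,\partial}$.

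It remains to identify $\overline{H\backslash G}^{\,\partial}$ with $\widehat{H\backslash G}$. Recall that $\partial$ and $d$ agree on $H\backslash G$ (this is the remark following Proposition~\ref{Prop:IsAMetric}, via Corollary~\ref{Cor:Topometric}), that $\partial$ is a complete metric on the compact topometric space $X$ (Fact~\ref{Fact:TopometricFacts}), and that $\partial$ refines $\tau$. Thus the inclusion $H\backslash G\hookrightarrow X$ is a $\partial$-isometric embedding of $(H\backslash G, d)$ into a complete metric space, so its $\partial$-closure is a metric completion of $(H\backslash G, d)$. To see that this completion sits inside $\Sa(H\backslash G)$ as the canonical copy of $\widehat{H\backslash G}$, take a $d$-Cauchy sequence $Hg_n$ representing a point $p\in\widehat{H\backslash G}$: it is $\partial$-Cauchy, hence $\partial$-converges by completeness to some $q\in\overline{H\backslash G}^{\,\partial}$, and since $\partial$ refines $\tau$ the same sequence $\tau$-converges to $q$. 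But the canonical embedding $\widehat{H\backslash G}\hookrightarrow\Sa(H\backslash G)$ is the $\tau$-continuous extension of $i$, so it sends $p$ to the $\tau$-limit of $Hg_n$, which by Hausdorffness is $q$. Hence the image of $\widehat{H\backslash G}$ is exactly $\overline{H\backslash G}^{\,\partial}$, and combining with the previous step yields $Y = \widehat{H\backslash G}$.

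The one step needing care is this last matching of the two ways $\widehat{H\backslash G}$ enters $\Sa(H\backslash G)$: as a metric completion (via the complete metric $\partial$) and as the $\tau$-continuous extension of the Samuel embedding $i$. The reconciliation rests entirely on $\partial$ refining $\tau$, which forces the $\partial$-limit and the $\tau$-limit of a sequence from $H\backslash G$ to coincide. Alternatively, one could bypass the completion picture and argue directly from Fact~\ref{Fact:CompletionInSamuel}, showing that $p\in\Sa(H\backslash G)$ is a compatibility point precisely when $p$ contains arbitrarily small-diameter open subsets of $H\backslash G$; but the $\partial$-completion route is cleaner and is exactly what the discussion preceding the proposition sets up.
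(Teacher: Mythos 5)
Your proof is correct and follows essentially the same route as the paper's own (terse) argument: $H$ is a compatibility point by the $(4)\Rightarrow(1)$ direction of Theorem~\ref{IntroThm:MHP}, the orbit $H\backslash G$ is $\partial$-dense in the $\partial$-closed, $G$-invariant set of compatibility points because $\tau$ and $\partial$ coincide there, and $\partial = d$ on the orbit identifies this $\partial$-closure with $\widehat{H\backslash G}$. Your explicit reconciliation of the metric completion with the canonical embedded copy of $\widehat{H\backslash G}$ is a detail the paper leaves implicit, and it is handled correctly.
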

\vspace{2 mm}

In particular, by Theorem~\ref{IntroThm:MHP}, we have that $\widehat{H\backslash G}\subseteq \Sa(H\backslash G)$ is comeager. As $H\backslash G\subseteq \widehat{H\backslash G}$ is comeager, we obtain the following.
\vspace{2 mm}

\begin{prop}
	\label{Prop:ComeagerOrbitSamuel}
	In $\Sa(H\backslash G)$, the orbit $H\backslash G\subseteq \Sa(H\backslash G)$ is comeager.
\end{prop}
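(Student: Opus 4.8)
The plan is to show that $H\backslash G$ is a \emph{dense $G_\delta$} subset of $\Sa(H\backslash G)$; comeagerness then follows at once, since a dense $G_\delta$ in a (necessarily Baire) compact space is comeager. Density is immediate, as $H\backslash G$ is by construction a dense subspace of its Samuel compactification (equivalently, the orbit of $H$ is dense). The real content is therefore entirely in showing that $H\backslash G$ is $G_\delta$ in $\Sa(H\backslash G)$, and I would obtain this in two stages, factoring through the intermediate space $\widehat{H\backslash G}$.

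First I would record that $\widehat{H\backslash G}$ is itself $G_\delta$ in $\Sa(H\backslash G)$: by Proposition~\ref{Prop:CompPointsInSamuel} it is exactly the set of compatibility points of $\Sa(H\backslash G)$, and item (1) of Lemma~\ref{Lem:Compatibility} asserts precisely that this set is topologically $G_\delta$. Next I would show that $H\backslash G$ is $G_\delta$ inside $\widehat{H\backslash G}$. On the set of compatibility points the topology $\tau$ coincides with the metric $\partial$, so by Theorem~\ref{IntroThm:MHP} the subspace $\widehat{H\backslash G}$ is Polish with its topology induced by $\partial$, and $\partial$ restricts to $d_G$ on $H\backslash G$. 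Since $G$ is Polish and $H$ is closed, the coset space $(H\backslash G, d_G)$ is Polish, hence completely metrizable; and a completely metrizable subspace of a metric space is always $G_\delta$ (Alexandrov's theorem). Composing the two stages and using that a $G_\delta$ subset of a $G_\delta$ subset is again $G_\delta$, I conclude that $H\backslash G$ is $G_\delta$ in $\Sa(H\backslash G)$, and being dense it is comeager.

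The one point deserving care — and the step I would flag as the crux — is the passage from ``$H\backslash G$ is comeager in $\widehat{H\backslash G}$'' and ``$\widehat{H\backslash G}$ is comeager in $\Sa(H\backslash G)$'' to ``$H\backslash G$ is comeager in $\Sa(H\backslash G)$''. Comeagerness does not in general transfer across an arbitrary subspace, so rather than manipulate meager sets directly I prefer the $G_\delta$-in-$G_\delta$ bookkeeping above, which sidesteps the issue cleanly. The only genuinely external input is that $H\backslash G$ is Polish; everything else is supplied by Proposition~\ref{Prop:CompPointsInSamuel}, Lemma~\ref{Lem:Compatibility}, and Theorem~\ref{IntroThm:MHP}.
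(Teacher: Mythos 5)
Your proof is correct, and it follows the same skeleton as the paper's: both factor through the left completion $\widehat{H\backslash G}$, identified with the set of compatibility points via Proposition~\ref{Prop:CompPointsInSamuel}, and both rely on the external fact that $H\backslash G$ is Polish. The difference is in the bookkeeping. The paper tracks comeagerness: item (2) of Theorem~\ref{IntroThm:MHP} gives that $\widehat{H\backslash G}$ is comeager in $\Sa(H\backslash G)$, the Polish space $H\backslash G$ is comeager (dense $G_\delta$) in $\widehat{H\backslash G}$, and the conclusion follows by transferring comeagerness across the intermediate space. That transfer step --- the one you flag as delicate --- is left implicit in the paper, but it is legitimate here: since $\Sa(H\backslash G)$ is compact Hausdorff, hence Baire, the comeager set $\widehat{H\backslash G}$ is dense, and a nowhere dense subset of a \emph{dense} subspace is nowhere dense in the ambient space, so meagerness does transfer. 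Your $G_\delta$-in-$G_\delta$ route buys exactly the avoidance of that auxiliary argument: Lemma~\ref{Lem:Compatibility}(1) gives that $\widehat{H\backslash G}$ is $G_\delta$, Alexandrov's theorem (applicable because $H\backslash G$ is completely metrizable and the $\tau$- and $\partial$-topologies agree on compatibility points) gives that $H\backslash G$ is $G_\delta$ in $\widehat{H\backslash G}$, and countable intersections of open sets compose across subspaces with no side conditions. In short, the two proofs use the same decomposition and the same structural inputs; yours is slightly longer but fully self-contained on the transfer point, while the paper's is terser at the cost of an implicit (though valid) Baire-category lemma.
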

\vspace{1 mm}

\begin{rem}
	This proposition is really a statement about topology rather than dynamics. Whenever $(X, d)$ is a Polish metric space and $S(X)$ is the Samuel compactification of $X$ with its metric uniformity, then $X\subseteq S(X)$ is comeager.
\end{rem}
\vspace{2 mm}

We now take some time to understand the canonical $G$-map $\pi\colon \Sa(G)\to \Sa(H\backslash G)$. To do this, we first need to understand how near ultrafilters on $H$ interact with those on $G$. Let $p\in \Sa(H)$. Then if $A\in p$ and $\epsilon > 0$, we have $AU_\epsilon \subseteq_{op} G$, and the collection $\{AU_\epsilon: A\in p, \epsilon > 0\}$ extends to a unique near ultrafilter in $\Sa(G)$. This gives rise to an embedding $i\colon \Sa(H)\hookrightarrow \Sa(G)$. More explicitly, given $p\in G$, we set
$$i(p) = \{B\subseteq_{op} G: B\cap AU_\epsilon \neq \emptyset \text{ for every } A\in p, \epsilon > 0\}.$$
Now given $p\in \Sa(G)$, we have $p\in i[\Sa(H)]$ iff $HU_\epsilon \in p$ for every $\epsilon > 0$. One direction is clear. For the other, if $HU_\epsilon \in p$ for every $\epsilon > 0$, it follows that for every $A\in p$ and $\epsilon > 0$, we have $AU_\epsilon \cap H\neq \emptyset$, and the collection 
$$\{B\subseteq_{op} H: B\cap AU_\epsilon \neq \emptyset \text{ for every } A\in p, \epsilon > 0\}$$
is a near ultrafilter $q$ on $H$ satisfying $i(q) = p$. 

From here on out, we will identify $\Sa(H)$ as a subspace of $\Sa(G)$ and suppress the embedding $i$. We now consider the quotient $\pi\colon G\to H\backslash G$ and extend it continuously to the respective Samuel compactifications. Given $p\in \Sa(G)$ and $q\in \Sa(H\backslash G)$, we have by Fact~\ref{Fact:SHGNults} that $\pi(p) = q$ iff $\pi^{-1}(AU_\epsilon)\in p$ for every $A\in q$ and $\epsilon > 0$. In particular, $\pi(p) = H$ iff $HU_\epsilon\in p$ for every $\epsilon > 0$. We obtain the following.
\vspace{2 mm}

\begin{prop}
	\label{Prop:PreimageSamuel}
	With $\pi\colon \Sa(G)\to \Sa(H\backslash G)$ the canonical map, we have $\pi^{-1}(\{H\}) = \Sa(H)$.
\end{prop}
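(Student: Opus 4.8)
The plan is to recognize that both sides of the claimed equality, viewed as subsets of $\Sa(G)$, are cut out by the \emph{identical} condition on a near ultrafilter $p\in\Sa(G)$: namely that $HU_\epsilon\in p$ for every $\epsilon>0$. Both of these characterizations are already in hand from the discussion immediately preceding the proposition, so the proof amounts to lining them up.

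For the copy of $\Sa(H)$ sitting inside $\Sa(G)$, the analysis of the embedding $i\colon\Sa(H)\hookrightarrow\Sa(G)$ established that, after suppressing $i$, a point $p\in\Sa(G)$ lies in $\Sa(H)$ exactly when $HU_\epsilon\in p$ for all $\epsilon>0$. For the fiber $\pi^{-1}(\{H\})$, applying Fact~\ref{Fact:SHGNults} to the uniformly continuous quotient $\pi\colon G\to H\backslash G$ (and its continuous extension to the Samuel compactifications) gives that $\pi(p)=H$ iff $\pi^{-1}(HU_\epsilon)\in p$ for every $\epsilon>0$; and since $\pi^{-1}(HU_\epsilon)=HU_\epsilon$ as subsets of $G$, this is literally the same condition. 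I would therefore simply write $p\in\pi^{-1}(\{H\})$ iff $HU_\epsilon\in p$ for all $\epsilon>0$ iff $p\in\Sa(H)$, which yields the asserted equality of sets.

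Because both characterizations are supplied essentially verbatim by the preceding paragraphs, there is no genuine obstacle to overcome; the only point deserving a sentence of justification is the step inside Fact~\ref{Fact:SHGNults} that passes from all open neighborhoods of $H$ in $\Sa(H\backslash G)$ to the cofinal family $\{C_{HU_\epsilon}:\epsilon>0\}$ of basic neighborhoods. This is exactly the near-ultrafilter neighborhood base recalled after Fact~\ref{Fact:UnivHP}, evaluated at the point $H$, so no additional computation is required and the proposition records a bookkeeping consequence of the two membership criteria established above.
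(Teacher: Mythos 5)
Your proof is correct and is essentially the paper's own argument: the paper also derives the proposition immediately from the two membership criteria established in the preceding discussion, namely that $p\in\Sa(H)$ (under the embedding $i$) iff $HU_\epsilon\in p$ for every $\epsilon>0$, and that $\pi(p)=H$ iff $HU_\epsilon\in p$ for every $\epsilon>0$ by Fact~\ref{Fact:SHGNults}. Your added remark that $\pi^{-1}(HU_\epsilon)=HU_\epsilon$ as subsets of $G$, and that one may work with the basic neighborhoods $C_{HU_\epsilon}$, just makes explicit what the paper leaves implicit.
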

\vspace{2 mm}

In the next section, we will be particularly interested in \emph{minimal} MHP flows. Recall that $S\subseteq G$ is called \emph{syndetic} if there is a finite set $F\subseteq G$ with $SF= G$. We have the following folklore fact.

\begin{fact}[\cite{Aus}, Ch.\ 1, Lem.\ 6]
	\label{Fact:SyndRet}
	Suppose $X$ is a $G$-flow and $x\in X$. Then $x\in X$ belongs to a minimal subflow iff for every $A\ni_{op} X$, the set $\{g\in G: xg\in A\}$ is syndetic.
\end{fact}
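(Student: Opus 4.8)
The plan is to recognize this as the standard characterization of almost periodic points: the return-set condition is precisely the statement that $x$ is \emph{almost periodic}, and a point is almost periodic iff its orbit closure $\overline{xG}$ is a minimal subflow. Since $x\in\overline{xG}$ always, and any minimal subflow containing $x$ must coincide with $\overline{xG}$, proving ``$x$ belongs to a minimal subflow'' is equivalent to proving ``$\overline{xG}$ is minimal.'' I would treat the two implications separately, writing $N(x,A) := \{g\in G: xg\in A\}$ throughout.

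For the forward direction, suppose $x$ lies in a minimal subflow $M$, so that $M = \overline{xG}$, and fix $A\ni_{op} x$. Since $M$ is minimal, every orbit is dense in $M$, so the sets $\{y\in M: yg\in A\}$, as $g$ ranges over $G$, form an open cover of $M$: continuity of the action makes each set open, and density of each orbit together with $x\in A\cap M$ guarantees every $y\in M$ lies in one of them. By compactness of $M$ (a closed subspace of the compact $X$) I extract a finite subcover indexed by $g_1,\dots,g_n$. Then for any $h\in G$ the point $xh\in M$ lies in some $\{y: yg_i\in A\}$, giving $x(hg_i)\in A$ and hence $h\in N(x,A)g_i^{-1}$; thus $N(x,A)\cdot\{g_1^{-1},\dots,g_n^{-1}\} = G$, which is exactly syndeticity of $N(x,A)$.

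For the converse, assume $N(x,A)$ is syndetic for every $A\ni_{op} x$; I would show directly that $\overline{xG}$ is minimal by proving $x\in\overline{yG}$ for every $y\in\overline{xG}$, which forces $\overline{yG} = \overline{xG}$ and so makes every orbit dense. Fix such a $y$ and an arbitrary $A\ni_{op} x$; using that $X$ is compact Hausdorff, hence regular, I shrink to $B\ni_{op} x$ with $\overline{B}\subseteq A$, and write $N(x,B)F = G$ for a finite $F$. The point $y$ is a limit of a net $xh_j$, and for each $j$ the defining property of $F$ yields $f_j\in F$ with $xh_jf_j^{-1}\in B$. Since $F$ is finite, I pass to a subnet on which $f_j$ is constantly equal to some $f$; then $xh_jf^{-1}\to yf^{-1}$ by continuity of right translation by $f^{-1}$, and as each term lies in the closed set $\overline{B}$ we get $yf^{-1}\in\overline{B}\subseteq A$. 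Thus $yf^{-1}\in A$ exhibits a point of $yG$ inside $A$, so $A\cap yG\neq\emptyset$; as $A$ was arbitrary, $x\in\overline{yG}$, completing the argument.

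The step I expect to require the most care is the converse: correctly bookkeeping the right action against the left-translate definition of syndeticity ($SF=G$), and the passage to a subnet making $f_j$ constant, which is legitimate because a net into the finite set $F$ has an eventually-constant subnet. The use of regularity of $X$ to produce $\overline{B}\subseteq A$ is essential, since I need the limit $yf^{-1}$ of points of $\overline{B}$ to remain inside $A$ rather than merely in $\overline{A}$.
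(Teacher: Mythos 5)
Your proof is correct. There is, however, no proof in the paper to compare it against: the statement is quoted as a Fact from Auslander's book (\cite{Aus}, Ch.~1, Lem.~6) and never proved internally. Your argument is the classical one and essentially reproduces Auslander's: the forward direction covers the compact set $M=\overline{xG}$ by the open sets $\{y\in M: yg\in A\}$ and extracts a finite subcover, and the converse decomposes each group element via $G=N(x,B)F$, passes to a subnet on which the $F$-coordinate is constant, and uses regularity ($\overline{B}\subseteq A$) to keep the limit point inside $A$. The one genuine (if minor) difference is structural: the textbook proof of the converse fixes a minimal subflow $M\subseteq\overline{xG}$ (whose existence requires Zorn's lemma) and derives a contradiction from $x\notin M$, whereas you show directly that $x\in\overline{yG}$ for every $y\in\overline{xG}$, which proves minimality of $\overline{xG}$ without ever invoking the existence of minimal subflows; this makes your version slightly more self-contained. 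You also correctly read the paper's ``$A\ni_{op} X$'' as a typo for ``$A\ni_{op} x$,'' which is the intended statement.
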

\vspace{2 mm}

The following simple proposition gives a combinatorial characterization for when $\Sa(H\backslash G)$ is minimal.
\vspace{2 mm}

\begin{prop}
	\label{Prop:SamuelMinimal}
	Let $H\subseteq G$ be a closed subgroup. Then the following are equivalent.
	\vspace{-2 mm}
	
	\begin{enumerate}
		\item 
		$\Sa(H\backslash G)$ is minimal.
		\item 
		For every $\epsilon > 0$, the set $HU_\epsilon \subseteq G$ is syndetic.
	\end{enumerate}
\end{prop}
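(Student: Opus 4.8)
The plan is to prove the equivalence of the two conditions by using the characterization of membership in a minimal subflow via syndeticity (Fact~\ref{Fact:SyndRet}), applied to the distinguished point $H \in \Sa(H\backslash G)$, which has dense orbit. Since $\Sa(H\backslash G)$ is an MHP flow and $H$ has dense orbit, minimality of $\Sa(H\backslash G)$ is equivalent to $H$ belonging to a minimal subflow: in one direction minimality trivially puts $H$ in the unique minimal subflow, and in the other, if $H$ lies in some minimal subflow $M$, then $\overline{H\cdot G} = M$, but $H$ has dense orbit so $M = \Sa(H\backslash G)$. Thus the whole problem reduces to characterizing when $H$ belongs to a minimal subflow.

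Next I would unwind what Fact~\ref{Fact:SyndRet} says for the point $H$. It tells us that $H$ belongs to a minimal subflow iff for every open neighborhood $A \ni_{op} H$, the return set $\{g \in G : Hg \in A\}$ is syndetic. The key observation is that a base of open neighborhoods of $H$ in $\Sa(H\backslash G)$ is given by the sets $C_{HU_\epsilon}$ (recalling that $\Sa(H\backslash G) \cong S_G(H\backslash G)$ as a space of near ultrafilters, with $\{C_{AU_\epsilon} : A \in p, \epsilon > 0\}$ forming a neighborhood base at each point, and $HU_\epsilon$ itself being the $\epsilon$-ball around $H$ in $H\backslash G$). It therefore suffices to check the syndeticity condition on these basic neighborhoods. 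For such a neighborhood, I would compute the return set: $Hg \in C_{HU_\epsilon}$ should translate, via the action $A \in pg \iff Ag^{-1} \in p$ and the identification of $H$ with its image, into a condition equivalent to $g \in HU_\epsilon$ (up to a harmless change in $\epsilon$). Concretely, $H\cdot g$ lands in the basic neighborhood $C_{HU_\epsilon}$ precisely when $g$ lies in the $\epsilon$-neighborhood $HU_\epsilon$ of the identity coset, so the return set is essentially $HU_\epsilon$ itself.

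Putting these together gives the equivalence: $H$ belongs to a minimal subflow iff the return set for each basic neighborhood $C_{HU_\epsilon}$ is syndetic, and this return set is (essentially) $HU_\epsilon$; hence the condition becomes exactly that $HU_\epsilon$ is syndetic for every $\epsilon > 0$, which is item (2). The main obstacle I anticipate is the bookkeeping in the second step, namely verifying precisely that the return set $\{g \in G : Hg \in C_{HU_\epsilon}\}$ coincides with $HU_\epsilon$ (or is sandwiched between $HU_{\epsilon'}$ and $HU_{\epsilon''}$ for nearby $\epsilon', \epsilon''$, which suffices since syndeticity of $HU_\epsilon$ for all $\epsilon$ is a condition over all $\epsilon > 0$). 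One must be careful that the neighborhoods $C_{HU_\epsilon}$ are not open but rather members of a neighborhood base, so I would either replace them by genuinely open sets $N$-style neighborhoods contained in/containing them, or invoke the standard fact that syndeticity of return sets for a neighborhood base is equivalent to syndeticity for all open neighborhoods. Once this identification is made, the proof is immediate.
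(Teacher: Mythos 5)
Your proposal is correct and takes essentially the same route as the paper: both directions reduce to Fact~\ref{Fact:SyndRet} applied at the point $H$, which has dense orbit, together with the observation that return sets to the basic neighborhoods $C_{HU_\epsilon}$ are sandwiched between $HU_\epsilon$ and $HU_{\epsilon'}$ for $\epsilon' > \epsilon$. The only cosmetic difference is in the forward direction, where the paper invokes the previously established fact that $H$ is a compatibility point (so neighborhoods of $H$ trace out small metric balls on the orbit $H\backslash G$) rather than carrying out your near-ultrafilter computation explicitly; the content is the same.
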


\begin{rem}
	Compare this to the notion of co-precompactness, where $H\subseteq G$ is \emph{co-precompact} if $\Sa(H\backslash G) \cong \widehat{H\backslash G}$, the left completion of $H\backslash G$. This occurs iff for every $\epsilon > 0$, there is a finite $F\subseteq G$ with $HFU_\epsilon = G$.
\end{rem}

\begin{proof}
	First assume $\Sa(H\backslash G)$ is minimal. Since $H\in \Sa(H\backslash G)$ is a compatibility point, we have that $HU_\epsilon\subseteq H\b G$ is relatively open. Item $(2)$ then follows from minimality.
	
	Conversely, assume item $(2)$ holds. It follows that in $\Sa(H\backslash G)$, the return times of $H$ to any open neighborhood of $H$ are syndetic. Then by Fact~\ref{Fact:SyndRet}, $H\in \Sa(H\backslash G)$ belongs to a minimal subflow, and the orbit of $H$ is dense in $\Sa(H\backslash G)$.
\end{proof}
\vspace{2 mm}

Also in the next section, we will need to consider two closed subgroups $H, H'\subseteq G$ and understand when a $G$-map $\phi\colon \Sa(H\backslash G)\to \Sa(H'\b G)$ can exist.
\vspace{2 mm}

\begin{prop}
	\label{Prop:MapsSamuel}
	Suppose $H, H'\subseteq G$ are closed subgroups with both $\Sa(H\backslash G)$ and $\Sa(H'\b G)$ minimal. Then there is a $G$-map $\phi\colon \Sa(H\backslash G)\to \Sa(H'\b G)$ iff there is $g\in G$ with $H\subseteq g^{-1}H'g$.
\end{prop}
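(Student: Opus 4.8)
The plan is to prove the two implications separately, with the backward direction being essentially immediate from the universal property and the forward direction requiring a Baire category argument.

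For the backward implication, suppose $g\in G$ satisfies $H\subseteq g^{-1}H'g$. I would apply the universal property of $\Sa(H\backslash G)$ recorded in Section~\ref{Sec:Samuel}: to produce a $G$-map $\phi\colon \Sa(H\backslash G)\to \Sa(H'\b G)$ it suffices to exhibit a point of $\Sa(H'\b G)$ fixed by every element of $H$. The natural candidate is the coset $H'g\in H'\b G\subseteq \Sa(H'\b G)$. Its stabilizer under the right $G$-action is exactly $g^{-1}H'g$, since $H'gk = H'g$ holds iff $gkg^{-1}\in H'$. As $H\subseteq g^{-1}H'g$, every $h\in H$ fixes $H'g$, and the universal property yields the desired (unique) $G$-map with $\phi(H) = H'g$.

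For the forward implication, suppose a $G$-map $\phi\colon \Sa(H\backslash G)\to \Sa(H'\b G)$ is given. Since $\Sa(H\backslash G)$ is minimal, $\phi$ is a factor (surjective). Set $q = \phi(H)$. Because $H\cdot h = H$ for all $h\in H$ and $\phi$ respects the action, $q$ is fixed by $H$, so $H\subseteq \mathrm{Stab}_G(q)$. The entire problem then reduces to showing that $q$ is an honest coset, i.e. $q = H'g$ for some $g\in G$: once this is known, $\mathrm{Stab}_G(H'g) = g^{-1}H'g\supseteq H$ and we are done. I would record that $\phi(Hg) = qg$, so that $\phi[H\b G] = q\cdot G$ and hence $H\b G\subseteq \phi^{-1}(q\cdot G)$.

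The crux, and the main obstacle, is to pin down $q$ inside the distinguished comeager orbit $H'\b G$ of $\Sa(H'\b G)$ (Proposition~\ref{Prop:ComeagerOrbitSamuel}), rather than on some other, necessarily meager, orbit. My approach is a category argument resting on the classical fact that a factor map between minimal flows is semiopen (cf.\ \cite{Aus}): given $U\subseteq_{op}\Sa(H\backslash G)$, pick $V\subseteq_{op}$ with $\overline V\subseteq U$; minimality of the domain and compactness give finitely many $g_i$ with $\bigcup_i \overline Vg_i = \Sa(H\backslash G)$, whence $\bigcup_i \phi[\overline V]g_i = \Sa(H'\b G)$, and Baire category together with the fact that right translations are homeomorphisms forces $\phi[\overline V]$, hence $\phi[U]$, to have nonempty interior. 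Combined with continuity, semiopenness implies that $\phi^{-1}$ sends nowhere dense sets to nowhere dense sets, and therefore meager sets to meager sets: if $N\subseteq \Sa(H'\b G)$ is closed nowhere dense and $\phi^{-1}(N)$ had interior $W$, then $\phi[W]\subseteq N$ would have nonempty interior, a contradiction. Now if the orbit $q\cdot G$ were meager, then $\phi^{-1}(q\cdot G)$ would be meager; but it contains the comeager orbit $H\b G$, which is impossible. Hence $q\cdot G$ is non-meager, so it cannot lie in the meager complement of $H'\b G$; as distinct orbits are disjoint, $q\cdot G = H'\b G$, giving $q = H'g$ for some $g\in G$ and finishing the proof.
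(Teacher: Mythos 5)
Your proof is correct, and its forward direction takes a genuinely different route from the paper's. The backward direction is identical: both you and the paper observe that $\mathrm{Stab}_G(H'g) = g^{-1}H'g \supseteq H$ and invoke the universal property of $\Sa(H\backslash G)$ from Section~\ref{Sec:Samuel}. For the forward direction, however, the paper simply cites Proposition 14.1 of \cite{AKL}, which states that a $G$-map between minimal flows with comeager orbits must carry the comeager orbit to the comeager orbit, and concludes immediately that $\phi(H) = H'g$. You instead re-derive exactly this fact from scratch: semiopenness of factor maps between minimal flows (via the finite-translate covering argument and Baire category), hence preimages of meager sets are meager, hence the orbit $q\cdot G = \phi[H\backslash G]$ cannot be meager since its preimage contains the comeager orbit $H\backslash G$ of Proposition~\ref{Prop:ComeagerOrbitSamuel}, forcing $q\in H'\backslash G$. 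This is essentially the standard proof of the cited result, specialized to the case at hand; what it buys is self-containment, since your argument uses only elementary topology together with results already established in the paper, at the cost of length. One small inaccuracy: you attribute surjectivity of $\phi$ to minimality of the \emph{domain}, but the correct reasoning is that the image $\phi[\Sa(H\backslash G)]$ is a subflow of the \emph{codomain} $\Sa(H'\backslash G)$, whose minimality forces it to be everything; this is harmless here since both flows are assumed minimal, but the logic should be stated correctly.
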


\begin{proof}
	For the forward direction, let $\phi$ be a $G$-map as above. By Proposition 14.1 in~\cite{AKL}, we know that $\phi$ must preserve the comeager orbit. In particular, $\phi(H) = H'g$ for some $g\in G$. It follows that for every $h\in H$, we have $H'gh = H'g$, i.e.\ that $H\subseteq g^{-1}H'g$. 
	
	For the reverse direction, if $H\subseteq g^{-1}Hg$ for some $g\in G$, it follows that $H$ stabilizes the point $H'g\in \Sa(H'\b G)$. Then the existence of a $G$-map $\phi$ as above follows from the universal property of $\Sa(H\backslash G)$.
\end{proof}
\vspace{2 mm}

\section{Canonical minimal flows}

In this section, we consider the universal minimal flow as well as two other "canonical" minimal flows in the context of Theorem~\ref{IntroThm:MHP}. These other special flows both deal with the notion of \emph{proximality}. 
\vspace{2 mm}

\begin{defin}
	\label{Def:Prox}
	Fix a $G$-flow $X$.
	\vspace{-2 mm}
	
	\begin{enumerate}
		\item 
		We say that $X$ is \emph{proximal} if for any $x, y\in X$, there is a net $g_i\in G$ and $z\in X$ with $xg_i\to z$ and $yg_i\to z$. Equivalently, there is $p\in \Sa(G)$ with $xp = yp$.
		\item 
		Let $P(X)$ denote the compact space of probability measures on $X$ endowed with the weak*-topology. Then $P(X)$ is also a $G$-flow. We say that $X$ is \emph{strongly proximal} if $P(X)$ is proximal. Equivalently, $X$ is strongly proximal iff $X$ is proximal and for any $\mu\in P(X)$, there is a net $g_i$ from $G$ with $\mu g_i\to \delta_x$ for some $x\in X$, where $\delta_x$ denotes the Dirac measure supported at $x$.
	\end{enumerate}
\end{defin}
\vspace{2 mm}

In \cite{G}, it is shown that there exist a \emph{universal minimal proximal flow}, denoted $\Pi(G)$, and a \emph{universal minimal strongly proximal flow}, denoted $\Pi_s(G)$ and often called the \emph{Furstenberg boundary}. Here, if P is a property of flows, a \emph{universal minimal P flow} is a minimal flow with property P which admits a $G$-map onto any other minimal flow with property P. Both are unique up to isomorphism. 
\vspace{2 mm}

\begin{lemma}
	\label{Lem:ProxRigid}
	If $X$ is a minimal, proximal $G$-flow, then the only $G$-map from $X$ to $X$ is the identity. 
\end{lemma}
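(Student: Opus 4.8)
The plan is to produce a single fixed point of $\phi$ using proximality, and then to propagate this to all of $X$ using minimality. Throughout, let $\phi\colon X\to X$ be an arbitrary $G$-map; since $X$ is minimal, $\phi[X]$ is a subflow and hence $\phi$ is surjective, though I will not even need surjectivity.

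First I would fix any point $x\in X$ and apply proximality to the pair $x$ and $\phi(x)$. This yields a net $g_i$ from $G$ and a point $z\in X$ with $xg_i\to z$ and $\phi(x)g_i\to z$. Since $\phi$ is a $G$-map, $\phi(x)g_i = \phi(xg_i)$, so this net satisfies $\phi(xg_i)\to z$. On the other hand, $xg_i\to z$ together with continuity of $\phi$ gives $\phi(xg_i)\to \phi(z)$. As $X$ is Hausdorff, limits of nets are unique, so $\phi(z) = z$. Thus $z$ is a genuine fixed point of $\phi$, and in particular the fixed-point set $\mathrm{Fix}(\phi) := \{w\in X : \phi(w) = w\}$ is non-empty.

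Next I would observe that $\mathrm{Fix}(\phi)$ is a subflow. It is closed because it is the preimage of the diagonal under the continuous map $w\mapsto (w, \phi(w))$ into $X\times X$ (using that $X$ is Hausdorff). It is $G$-invariant because if $\phi(w) = w$, then for every $g\in G$ we have $\phi(wg) = \phi(w)g = wg$, using once more that $\phi$ is a $G$-map. Since $X$ is minimal and $\mathrm{Fix}(\phi)$ is a non-empty closed invariant subset, minimality forces $\mathrm{Fix}(\phi) = X$, i.e.\ $\phi$ is the identity.

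I do not anticipate a serious obstacle here; the only point requiring care is the interchange of the two expressions for $\phi(xg_i)$, which is exactly where the hypothesis that $\phi$ respects the $G$-action is used, combined with Hausdorffness to conclude $\phi(z) = z$ from the two limits. The conceptual content is simply that proximality collapses the pair $(x,\phi(x))$ onto the diagonal along a common net, manufacturing a fixed point, after which minimality does the rest.
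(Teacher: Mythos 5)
Your proof is correct and follows essentially the same strategy as the paper's: apply proximality to the pair $(x,\phi(x))$ to manufacture a fixed point of $\phi$, then use minimality to conclude $\phi = \mathrm{id}_X$. The only differences are cosmetic — you work with nets and the closed invariant set $\mathrm{Fix}(\phi)$, while the paper phrases the same argument via the Samuel compactification action ($xp = \phi(x)p = \phi(xp)$) and organizes it as a contradiction.
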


\begin{proof}
	Suppose $\phi\colon X\to X$ is a $G$-map. If there is $x\in X$ with $\phi(x) = x$, then also $\phi(xp) = xp$ for every $p\in \Sa(G)$. As $X$ is minimal, this implies that $\phi$ is the identity map. Now suppose $\phi \neq \mathrm{id}_X$. Fix $x\in X$, and find $p\in \Sa(G)$ with $xp = \phi(x)p$. But as $\phi(x)p = \phi(xp)$, this is a contradiction since $\phi$ has no fixed points.  
\end{proof}
\vspace{2 mm}

\begin{lemma}
	\label{Lem:ProxMHP}
	The flows $\Pi(G)$ and $\Pi_s(G)$ are both MHP.
\end{lemma}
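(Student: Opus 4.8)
The statement to prove is Lemma~\ref{Lem:ProxMHP}: both $\Pi(G)$ and $\Pi_s(G)$ are MHP. By Proposition~\ref{prop:MHP}, a $G$-flow $X$ is MHP if and only if the universal highly proximal extension $\pi_X\colon S_G(X)\to X$ is an isomorphism. So the plan is, for each of $\Pi(G)$ and $\Pi_s(G)$, to exhibit the universal highly proximal extension and argue it is an isomorphism by invoking the relevant universal property. The key observation is that a highly proximal extension of a minimal flow is in particular a \emph{proximal} extension (this is established in the text, just before Fact~\ref{Fact:HPOriginal}), and proximal and strongly proximal extensions preserve these respective properties.

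\textbf{The proximal case.} First I would treat $\Pi(G)$. Consider $S_G(\Pi(G))$, the universal highly proximal extension. Since $\Pi(G)$ is minimal, $\pi := \pi_{\Pi(G)}$ is a highly proximal extension onto a minimal flow, hence by the discussion preceding Fact~\ref{Fact:HPOriginal} it is a proximal extension, and by the remarks on highly proximal extensions $S_G(\Pi(G))$ is itself minimal. The point to check is that $S_G(\Pi(G))$ is proximal: given $p_0, p_1 \in S_G(\Pi(G))$, I want a net $g_i$ with $p_0 g_i$ and $p_1 g_i$ converging to a common limit. Using that $\pi$ is a proximal extension together with proximality of $\Pi(G)$ itself, one first brings $\pi(p_0)$ and $\pi(p_1)$ together in $\Pi(G)$, then uses the proximality of the extension $\pi$ on the common fiber to bring $p_0$ and $p_1$ together. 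Thus $S_G(\Pi(G))$ is a minimal proximal flow. By universality of $\Pi(G)$ among minimal proximal flows, there is a $G$-map $S_G(\Pi(G))\to \Pi(G)$; but $\pi$ itself is such a map, and more importantly $\Pi(G)$ maps onto $S_G(\Pi(G))$'s image. The cleanest way to conclude is Lemma~\ref{Lem:ProxRigid}: the composite $\Pi(G)\to S_G(\Pi(G))\xrightarrow{\pi}\Pi(G)$ (using universality to get the first map) is a $G$-map from the minimal proximal $\Pi(G)$ to itself, hence the identity, which forces $\pi$ to be an isomorphism.

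\textbf{The strongly proximal case.} For $\Pi_s(G)$ the argument runs in parallel, with ``proximal'' upgraded to ``strongly proximal.'' Again $S_G(\Pi_s(G))$ is minimal, and I claim it is strongly proximal. Strong proximality of $X$ says $P(X)$ is proximal, equivalently every measure can be pushed toward a point mass along the $G$-action. The extension $S_G(\Pi_s(G))\to \Pi_s(G)$ induces a map $P(S_G(\Pi_s(G)))\to P(\Pi_s(G))$ on measures; using the highly proximal structure (fibers collapse to points along suitable nets, per Fact~\ref{Fact:HPOriginal}) together with the strong proximality of $\Pi_s(G)$, one shows any $\mu\in P(S_G(\Pi_s(G)))$ can be steered to a Dirac mass, and also verifies proximality as above. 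Hence $S_G(\Pi_s(G))$ is a minimal strongly proximal flow, and universality of $\Pi_s(G)$ together with the rigidity argument (Lemma~\ref{Lem:ProxRigid} applies since strongly proximal flows are in particular proximal) forces $\pi_{\Pi_s(G)}$ to be an isomorphism.

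\textbf{Main obstacle.} The routine part is the minimality and the appeal to universality plus rigidity; these follow formally once the correct property is verified on $S_G(X)$. The genuine work is showing that the highly proximal extension \emph{preserves} proximality and, more delicately, strong proximality. For proximality this is essentially the computation already sketched in the text (highly proximal $\Rightarrow$ proximal), so transferring it to $S_G(\Pi(G))$ is light. The harder step is strong proximality: one must control the induced action on $P(S_G(\Pi_s(G)))$ and argue that collapsing fibers of a highly proximal map interacts correctly with pushing measures to point masses. I expect this measure-theoretic transfer to be the crux, and I would handle it by exploiting Fact~\ref{Fact:HPOriginal} to find nets along which whole fibers $\pi^{-1}(\{x g_i\})$ shrink to singletons, so that the pushforward of any measure is asymptotically concentrated wherever its image in $P(\Pi_s(G))$ concentrates.
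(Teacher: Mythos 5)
Your proposal is correct and follows essentially the same route as the paper: show that any highly proximal extension of $\Pi(G)$ (resp.\ $\Pi_s(G)$) is again minimal and proximal (resp.\ strongly proximal, using Fact~\ref{Fact:HPOriginal} to collapse the fiber supporting the limit measure), then invoke universality together with the rigidity Lemma~\ref{Lem:ProxRigid} to force the extension to be an isomorphism. The only cosmetic difference is that you work directly with $S_G(X)$ via Proposition~\ref{prop:MHP} and apply rigidity to the base flow, whereas the paper argues by contradiction with an arbitrary non-trivial highly proximal extension and applies rigidity to a self-map of that extension.
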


\begin{proof}
Suppose $\phi\colon X\to \Pi(G)$ is a non-trivial highly proximal $G$-map. Then it follows that $X$ is also minimal and proximal, so let $\psi\colon \Pi(G)\to X$ be a $G$-map. It follows that $\psi\circ \phi\colon X\to X$ is a non-trivial $G$-map, contradicting Lemma~\ref{Lem:ProxRigid}.

To show that $\Pi_s(G)$ is MHP, suppose $\phi\colon X\to \Pi_s(G)$ is a non-trivial highly proximal $G$-map. As a highly proximal extension of a minimal proximal flow, $X$ is proximal. Now suppose $\mu\in P(X)$. We can find a net $g_i\in G$ so that $\phi_*\mu g_i\to \delta_p$ for some $p\in \Pi_s(G)$. We may assume that $\mu g_i\to \nu$ for some $\nu\in P(X)$ supported on $\phi^{-1}(\{p\})$. Then since $\phi$ is highly proximal and $X$ is minimal, we can use Fact~\ref{Fact:HPOriginal} and find another net $h_j\in G$ so that $\phi^{-1}(\{p\})h_j$ shrinks down to some point $x\in X$. Hence $\nu h_j\to \delta_x$, showing that $X$ is strongly proximal. Now a similar argument to the proximal case shows that $\phi$ must be an isomorphism.
\end{proof}
\vspace{2 mm}

We can use $M(G)$ to create a particularly nice representation of $\Pi_s(G)$. Form the $G$-flow $P(M(G))$, and let $A\subseteq P(M(G))$ be a minimal \emph{affine} subflow of $P(M(G))$, i.e.\ a subflow which is closed under convex combinations and minimal with this property. Then $A$ is strongly proximal, and $\overline{ex(A)}$, the closure of the extreme points of $A$, is the unique minimal subflow of $A$. We then obtain $\overline{ex(A)}\cong \Pi_s(G)$. More details can be found in chapter 3 of \cite{G}.

From this characterization of $\Pi_s(G)$, it follows that a topological group $G$ is amenable iff $G$ admits no nontrivial minimal strongly proximal actions. As for proximal actions, we call $G$ \emph{strongly amenable} if $G$ admits no nontrivial minimal proximal actions. In particular, every strongly amenable group is amenable.
\vspace{2 mm}

\begin{lemma}
	\label{Lem:SubgroupProx}
	Let $X$ be a proximal $G$-flow, and let $H\subseteq G$ be a closed subgroup with $\Sa(H\backslash G)$ minimal. Then $H$ acts proximally on $X$.
\end{lemma}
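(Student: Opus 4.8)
The plan is to exploit the equivalence, recorded in Definition~\ref{Def:Prox}, between proximality and the existence of a witnessing point in the Samuel compactification. Identifying $\Sa(H)$ with its image as a subspace of $\Sa(G)$, it suffices to show that for any $x, y \in X$ there is some $q \in \Sa(H)$ with $xq = yq$: since $H$ is dense in $\Sa(H)$, such a $q$ is a limit of a net $h_j \in H$, and continuity of the $G$-maps $\lambda_x, \lambda_y \colon \Sa(G) \to X$ then gives $xh_j \to xq$ and $yh_j \to yq$ with $xq = yq$, which is exactly a net from $H$ witnessing proximality of $x$ and $y$.

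First I would fix $x, y \in X$ and use proximality of the full $G$-flow $X$ to obtain $p \in \Sa(G)$ with $xp = yp$. The difficulty is that $p$ need not lie in $\Sa(H)$; equivalently, $\pi(p)$ need not equal $H$, where $\pi \colon \Sa(G) \to \Sa(H\backslash G)$ is the canonical map. To remedy this I would invoke minimality of $\Sa(H\backslash G)$: the orbit of $\pi(p)$ is then dense, so there is a net $g_i \in G$ with $\pi(p)g_i = \pi(pg_i) \to H$. Passing to a subnet and using compactness of $\Sa(G)$, we may assume $pg_i \to q$ for some $q \in \Sa(G)$. Continuity of $\pi$ forces $\pi(q) = H$, so $q \in \pi^{-1}(\{H\}) = \Sa(H)$ by Proposition~\ref{Prop:PreimageSamuel}. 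It then remains to check $xq = yq$: since $\lambda_x, \lambda_y$ are $G$-maps, for each $i$ we have $x(pg_i) = (xp)g_i = (yp)g_i = y(pg_i)$, using $xp = yp$, so the nets $(x(pg_i))_i$ and $(y(pg_i))_i$ coincide; taking limits along $pg_i \to q$ and using continuity of $\lambda_x, \lambda_y$ yields $xq = yq$ by Hausdorffness of $X$. This $q \in \Sa(H)$ is the witness required above.

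The only delicate step is the middle one. Because $\pi(p)$ may be a genuine boundary point of $\Sa(H\backslash G)$ rather than an actual coset, one cannot simply translate $p$ by a single $g \in G$ into $\Sa(H)$. The hypothesis that $\Sa(H\backslash G)$ is minimal is exactly what lets one push $\pi(p)$ arbitrarily close to $H$ along a $G$-orbit, and compactness of $\Sa(G)$ then upgrades this approximation to an honest clustering point $q \in \Sa(H)$. Everything else is routine continuity together with the functoriality of the evaluation maps $\lambda_x$, $\lambda_y$.
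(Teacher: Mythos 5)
Your proof is correct and follows essentially the same route as the paper's: proximality of the $G$-action yields $p \in \Sa(G)$ with $xp = yp$, and minimality of $\Sa(H\backslash G)$ is then used to push $p$ into $\pi^{-1}(\{H\}) = \Sa(H)$, from which $H$-proximality follows. The only cosmetic difference is that the paper phrases the second step via the semigroup product in $\Sa(G)$ --- choosing $q$ with $pq \in \Sa(H)$ --- whereas you unpack it as a net $g_i$ with $\pi(p)g_i \to H$ plus compactness; your cluster point $q$ is exactly the paper's $pq$.
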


\begin{proof}
	Let $x, y\in X$. As $X$ is a proximal $G$-flow, find $p\in \Sa(G)$ with $xp = yp$. Since $\Sa(H\backslash G)$ is minimal, we can find $q\in \Sa(G)$ with $pq\in \Sa(H)$. Then $xpq = ypq$, showing that $H$ acts proximally on $X$.
\end{proof}
\vspace{2 mm}

The following provides a generalization of Theorem 1.2 from \cite{MNT}.
\vspace{2 mm}

\begin{theorem}\mbox{}
	\label{Thm:ExAmSub}
	Fix a minimal MHP flow $X$ with a comeager orbit.
	\begin{enumerate}
		\item
		$X\cong M(G)$ iff $X\cong \Sa(H\backslash G)$ for some extremely amenable closed subgroup $H\subseteq G$.
		\item
		$X\cong \Pi_s(G)$ iff $X\cong \Sa(H\backslash G)$ for some maximal amenable subgroup $H\subseteq G$.
		\item 
		If $X\cong \Sa(H\backslash G)$ for some strongly amenable closed subgroup $H\subseteq G$ and $X$ is proximal, then $X\cong \Pi(G)$. 
		\end{enumerate}
\end{theorem}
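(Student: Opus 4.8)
The plan is to use Theorem~\ref{IntroThm:MHP} as the common starting point: since $X$ is a minimal MHP flow with a comeager orbit, fix $H = \mathrm{Stab}(p)$ for a point $p$ in the comeager orbit and identify $X \cong \Sa(H\backslash G)$ with $p = H$. Minimality of $X$ gives, via Proposition~\ref{Prop:SamuelMinimal}, that $HU_\epsilon$ is syndetic for every $\epsilon > 0$, hence so is $H'U_\epsilon$ for any closed $H' \supseteq H$, so each $\Sa(H'\backslash G)$ is minimal. I will repeatedly use $\pi = \lambda_p \colon \Sa(G)\to X$ with $\pi^{-1}(\{p\}) = \Sa(H)$ (Proposition~\ref{Prop:PreimageSamuel}), and the measure-theoretic strengthening of Lemma~\ref{Lem:SubgroupProx}: if $\Sa(H'\backslash G)$ is minimal and $X$ is a (strongly) proximal $G$-flow, then $H'$ acts (strongly) proximally on $X$, proved by contracting a given $\mu\in P(X)$ to a Dirac along $\Sa(G)$ and then pushing into $\Sa(H')$ using minimality.

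For $(1)$, the reverse direction is direct: if $H$ is extremely amenable, every minimal $G$-flow $Z$ has an $H$-fixed point when viewed as an $H$-flow, so the universal property of $\Sa(H\backslash G)$ gives a $G$-map $X\to Z$; as $X$ is minimal it is then universal, so $X\cong M(G)$. For the forward direction I would take a minimal $G$-subflow $M\subseteq \Sa(G)$, so $M\cong M(G)\cong X$, and study $\pi|_M\colon M\to M(G)$. The key step is that $\pi|_M$ is an isomorphism, which I expect to extract from coalescence of the universal minimal flow (every $G$-endomorphism of $M(G)$ is an automorphism; Auslander), since $\pi|_M$ composed with an isomorphism $M(G)\to M$ is an endomorphism of $M(G)$. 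Granting this, set $u = (\pi|_M)^{-1}(p)\in \Sa(H)$; for $h\in H$ we have $\pi(uh) = ph = p = \pi(u)$ with $uh\in M$, so injectivity forces $uh = u$. Thus the greatest $H$-ambit $\Sa(H)$ has an $H$-fixed point, so $M(H)$ is trivial and $H$ is extremely amenable.

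Part $(3)$ is cleanest, and I would prove it before $(2)$ since it isolates the ``two maps into a proximal flow'' idea. As $X$ is minimal and proximal, $\Pi(G)$ maps onto $X$. Conversely, $H$ acts proximally on $\Pi(G)$ by the extended Lemma~\ref{Lem:SubgroupProx}, so a minimal $H$-subflow of $\Pi(G)$ is proximal and hence trivial by strong amenability of $H$; this gives an $H$-fixed point $y\in \Pi(G)$, and the universal property of $\Sa(H\backslash G)$ yields a $G$-map $X\to \Pi(G)$. The composite $\Pi(G)\to X\to \Pi(G)$ is an endomorphism of a minimal proximal flow, hence the identity by Lemma~\ref{Lem:ProxRigid}; therefore $\Pi(G)\to X$ is injective, so an isomorphism, and $X\cong \Pi(G)$.

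Part $(2)$ is the hardest and carries the main obstacle. For the forward direction, amenability of $H=\mathrm{Stab}(p)$ is the point-stabilizer-of-a-strongly-proximal-flow statement, which I would cite (Glasner) or adapt from the affine realization $\Pi_s(G)=\overline{ex(A)}$, $A\subseteq P(M(G))$, where $p$ is an $H$-fixed extreme point; establishing this for a general Polish $G$ is the step I expect to be delicate. Maximality then follows cleanly: if $H'\supseteq H$ is amenable, the $H'$-action on $X$ is strongly proximal and amenability produces an $H'$-invariant measure, which strong proximality forces to be $\delta_x$ at an $H'$-fixed point $x$; since $H\subseteq H'$ also fixes $p$, were $x\neq p$ the $H$-invariant measure $\tfrac12(\delta_p+\delta_x)$ could not be contracted to a Dirac, contradicting strong proximality of the $H$-action, so $x=p$ and $H'\subseteq \mathrm{Stab}(p)=H$. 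For the converse, with $H$ maximal amenable the $H$-action on $\Pi_s(G)$ is strongly proximal and amenability yields an $H$-fixed point $q\in \Pi_s(G)$; its stabilizer contains $H$ and is amenable, so equals $H$ by maximality. The universal property then gives $\phi\colon X\to \Pi_s(G)$ with $\phi(p)=q$, which preserves the comeager orbit and restricts to a $G$-space isomorphism $pG\cong H\backslash G\cong qG$; since $X=S_G(pG)$ and $\Pi_s(G)=S_G(qG)$ are the universal highly proximal extensions of this common dense orbit, $\phi$ is forced to be an isomorphism, giving $X\cong \Pi_s(G)$.
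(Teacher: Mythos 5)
Your parts (1) and (3) are correct and essentially reproduce the paper's own arguments. In (1) the paper likewise fixes a minimal $M\subseteq \Sa(G)$, uses that $\pi|_M$ is an isomorphism (you justify this via coalescence of $M(G)$, which the paper leaves implicit) and $\pi^{-1}(\{H\}) = \Sa(H)$ (Proposition~\ref{Prop:PreimageSamuel}) to produce an $H$-fixed point in $\Sa(H)$; your reverse direction, showing $X$ is itself universal, is a harmless variant. In (3) your argument is literally the paper's. Within (2), your maximality argument via the $H$-invariant measure $\tfrac{1}{2}(\delta_p+\delta_x)$ is a correct alternative to the paper's route through Proposition~\ref{Prop:MapsSamuel} and Lemma~\ref{Lem:ProxRigid}, and applying Lemma~\ref{Lem:SubgroupProx} to $P(X)$ to get strong proximality of subgroup actions is exactly what the paper does.

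The genuine gap is the step you yourself flag as delicate, and it cannot be waved off by citation: amenability of $H=\mathrm{Stab}(p)$ when $X\cong \Pi_s(G)$. This is the heart of the paper's proof of (2), and \emph{both} directions of your (2) depend on it: the forward direction needs it before maximality is even in play, and your converse invokes it again when you assert that the stabilizer of the $H$-fixed point $q\in\Pi_s(G)$ ``is amenable, so equals $H$ by maximality'' --- without it you cannot exclude $\mathrm{Stab}(q)\supsetneq H$, in which case your map $\phi\colon X\to\Pi_s(G)$ need not restrict to an isomorphism of orbits. For a general Polish group this is not an off-the-shelf fact from \cite{G}; the paper proves it, and its proof is the main content of part (2). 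The argument: realize $\Pi_s(G)\cong \overline{ex(A)}$ with $A\subseteq P(M)$ a minimal affine subflow, $M\subseteq\Sa(G)$ minimal, and push forward along the canonical $\pi\colon \Sa(G)\to\Sa(H\backslash G)$ to get $\pi_*\colon P(\Sa(G))\to P(\Sa(H\backslash G))$. Since $\Sa(H\backslash G)\cong\Pi_s(G)$ is strongly proximal, the Dirac copy of $\Sa(H\backslash G)$ is the unique minimal subflow of $P(\Sa(H\backslash G))$, so $\pi_*|_{\overline{ex(A)}}$ maps $\overline{ex(A)}$ isomorphically onto it; and since $\pi_*^{-1}(\{\delta_H\}) = P(\Sa(H))$ by Proposition~\ref{Prop:PreimageSamuel}, the intersection $\overline{ex(A)}\cap P(\Sa(H))$ is a singleton, which is $H$-invariant --- that is, an $H$-invariant measure on the greatest $H$-ambit $\Sa(H)$ --- whence $H$ is amenable. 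Note that your heuristic ``$p$ is an $H$-fixed extreme point'' does not by itself yield this: the invariant measure lives on the fiber $P(\Sa(H))$ over $p$, not at $p$. If you import this lemma, the rest of your proposal goes through.
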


\begin{proof}
	$(1)$ First assume $X\cong M(G)$, and let $H\subseteq G$ be the closed subgroup given by item $(4)$ of Theorem~\ref{IntroThm:MHP}. Fix a minimal subflow $M\subseteq \Sa(G)$, and consider the canonical map $\pi\colon \Sa(G)\to \Sa(H\backslash G)$. Then $\pi$ is surjective, and $\pi|_M$ is an isomorphism. Since by Proposition~\ref{Prop:PreimageSamuel} we have $\pi^{-1}(\{H\}) = \Sa(H)$, it follows that $M\cap \Sa(H)$ is a singleton and an $H$-flow. As any minimal subflow of $\Sa(H)$ is isomorphic to $M(H)$, we see that $H$ is extremely amenable. 
	
	Conversely, suppose $H\subseteq G$ is an extremely amenable closed subgroup of $G$ with $\Sa(H\backslash G)$ minimal. Then $M(G)$ must have an $H$-fixed point. It follows that there is a $G$-map $\phi\colon \Sa(H\backslash G)\to M(G)$. As we assumed that $\Sa(H\backslash G)$ was minimal, it follows that $\phi$ is an isomorphism.  
	\vspace{2 mm}
	
	$(2)$ We break the argument into the following parts.
	\begin{itemize}
		\item 
		If $X \cong \Pi_s(G)$, then $X\cong \Sa(H\backslash G)$ with $H\subseteq G$ a closed amenable subgroup.
		\item 
		If $X\cong \Sa(H'\b G)$ with $H'\subseteq G$ a closed amenable subgroup, then $X$ maps onto any strongly proximal flow.
	\end{itemize}
	From these two items, the theorem follows, since if $H\subsetneq H'$ are both closed amenable subgroups of $G$, then by Proposition~\ref{Prop:MapsSamuel}, we have a non-trivial factor map $\Sa(H\backslash G)\to \Sa(H'\b G)$. If we had $\Pi_s(G)\cong \Sa(H\backslash G)$, then the second item would allow us to build a non-trivial $G$-map from $\Pi_s(G)$ to itself, contradicting Lemma~\ref{Lem:ProxRigid}. Conversely, if $X\cong \Sa(H'\b G)$ for $H'\subseteq G$ a maximal amenable subgroup, then using the second item we obtain a map $\Sa(H'\b G)\to \Pi_s(G)$. By the first item, we have $\Pi_s(G)\cong \Sa(H\backslash G)$ for some closed amenable subgroup $H\subseteq G$. By Proposition~\ref{Prop:MapsSamuel} we must have $H\subseteq g^{-1}H'g$, so in fact $H = g^{-1}H'g$ as $H$ was assumed maximal. It follows that $\Sa(H\backslash G)\cong \Sa(H'\b G) = \Pi_s(G)$.
	
	To prove the first item, suppose $X\cong \Pi_s(G) \cong \Sa(H\backslash G)$. Let $M\subseteq \Sa(G)$ be a minimal subflow, and let $A\subseteq P(M)$ be a minimal affine subflow. Then $X\cong \overline{ex(A)}$, the unique minimal subflow of $A$. Now letting $\pi\colon \Sa(G)\to \Sa(H\backslash G)$ be the canonical map, we have the affine extension $\pi_*\colon P(\Sa(G))\to P(\Sa(H\backslash G))$ to the spaces of measures. Identifying each $p\in \Sa(H\backslash G)$ with the Dirac measure $\delta_p$, we have that $\Sa(H\backslash G)$ is the unique minimal subflow of $P(\Sa(H\backslash G))$. It follows that $\pi_*|_{\overline{ex(A)}}\colon \overline{ex(A)}\to \Sa(H\backslash G)$ is an isomorphism. However, we also have $\pi_*^{-1}(\{H\}) = P(\Sa(H))$, so $P(\Sa(H))\cap \overline{ex(A)}$ is a singleton and an $H$-flow, i.e.\ an $H$-invariant measure on $\Sa(H)$. Hence $H$ is amenable.
	
	To prove the second item, we assume $X\cong \Sa(H'\b G)$ with $H'\subseteq G$ a closed amenable subgroup. On $P(\Pi_s(G))$, $H'$ acts proximally by Lemma~\ref{Lem:SubgroupProx}, hence $H'$ acts strongly proximally on $\Pi_s(G)$. Since $H'$ is amenable, it follows that $\Pi_s(G)$ has an $H'$-fixed point, so there is a $G$-map from $\Sa(H'\b G)$ to $\Pi_s(G)$.
	\vspace{2 mm}
	
	$(3)$ As for the third item, we assume that $X\cong \Sa(H\backslash G)$ is proximal and that $H\subseteq G$ is strongly amenable. By Lemma~\ref{Lem:SubgroupProx}, $H$ acts proximally on $\Pi(G)$. As $H$ is strongly amenable, $\Pi(G)$ has an $H$-fixed point, so there is a $G$-map from $\Sa(H\backslash G)$ to $\Pi(G)$. As $\Sa(H\backslash G)$ was assumed proximal, we have $\Sa(H\backslash G)\cong \Pi(G)$.
\end{proof}
\vspace{2 mm}

\begin{rem}
	When considering the Furstenberg boundary or the universal minimal proximal flow of locally compact groups, we note that if  $\Sa(H\backslash G)$ is minimal, then in fact $\Sa(H\backslash G) = H\b G$, i.e.\ that $H$ is a cocompact subgroup of $G$. This is because $H\b G\subseteq \Sa(H\backslash G)$ is comeager, but also $F_\sigma$, being an orbit of a locally compact group action. So $\Sa(H\b G)\setminus (H\b G)$ is $G_\delta$, and if it were non-empty, then by minimality it would be dense, a contradiction. 
\end{rem}
\vspace{2 mm}

The following question addresses whether item $(3)$ in Theorem~\ref{Thm:ExAmSub} can be strengthened to have the same form as items $(1)$ and $(2)$.
\vspace{2 mm}

\begin{que}
	\label{Que:ProxSubgroup}
	Suppose $\Pi(G)\cong \Sa(H\backslash G)$ for some closed subgroup $H\subseteq G$. Then must $H$ be strongly amenable?
\end{que}
\vspace{2 mm}

\section{Reflecting meager orbits}

The main theorem of this section is the following ``reflection'' theorem. 

\begin{theorem}
	\label{Thm:Reflection}
	Let $X$ be a minimal MHP flow all of whose orbits are meager. Then there is a factor $\phi\colon X\to Y$ so that $Y$ is metrizable and also has all orbits meager.
\end{theorem}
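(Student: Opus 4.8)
The plan is to prove the statement constructively. Since $X$ is minimal, every orbit is dense, so ``all orbits meager'' means $X$ has no comeager orbit, and by the equivalence $(1)\Leftrightarrow(3)$ of Theorem~\ref{IntroThm:MHP} this means $X$ has \emph{no} compatibility point whatsoever. Starting from this, I would build by a stage-by-stage diagonal construction a single metrizable factor $\phi\colon X\to Y$ that fails the transitivity criterion of Rosendal recalled in the proof of Theorem~\ref{IntroThm:MHP}, and therefore has no comeager orbit; being a factor of the minimal flow $X$, the flow $Y$ is minimal, so all of its orbits are meager.

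The first real step is to upgrade the pointwise failure of compatibility into a uniform metric gap: \emph{there is $\epsilon^*>0$ such that every nonempty open $O\subseteq X$ has $\partial$-diameter $>\epsilon^*$.} Locally, if some open $B_0$ had the property that for every $\epsilon>0$ some open $B\subseteq_{op}B_0$ has $\partial$-diameter $\le\epsilon$, I could nest $\tau$-closures $\overline{B_{n+1}}\subseteq B_n$ with $\partial$-diameter of $B_n$ at most $1/n$ (lower semicontinuity of $\partial$ lets me pass to closures, and compactness keeps $\bigcap_n\overline{B_n}$ nonempty); its unique point would then be a compatibility point, a contradiction. So some pair $(\epsilon,B_0)$ witnesses the failure on $B_0$. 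Globally, minimality makes the $G$-translates of $B_0$ an open cover of $X$, so finitely many $B_0g_1,\dots,B_0g_m$ cover $X$; as each $\rho_{g_i}$ is a $\partial$-uniform homeomorphism by Proposition~\ref{Prop:TopometricDynamics}(2), the witness transfers to $B_0g_i$ at some scale $\epsilon_i$, and any open $O$ meets some $B_0g_i$ in a nonempty open set, giving $\partial$-diameter $>\epsilon^*:=\min_i\epsilon_i$.

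The heart of the argument is the construction of $Y$. Since $\partial$ is computed by orbit-Lipschitz functions (Corollary~\ref{Cor:Topometric}), and any such function generates—together with its $G$-translates (a separable set, as $g\mapsto f\circ\rho_g$ is continuous and $G$ is separable) and the algebra operations—a metrizable factor, I would build an increasing chain of countable families $\mathcal F_0\subseteq\mathcal F_1\subseteq\cdots$ of orbit-Lipschitz functions. At stage $n$, let $\phi_n\colon X\to Y_n$ be the metrizable factor generated by $\mathcal F_n$ and fix a countable basis $\{B_{n,k}\}$ of $Y_n$. Each $\phi_n^{-1}(B_{n,k})$ is nonempty open, so by the uniform gap it contains points $p,q$ with $\partial(p,q)>\epsilon^*$; choose $f_{n,k}\in C_{OL}(X,[0,1])$ with $|f_{n,k}(p)-f_{n,k}(q)|>\epsilon^*/2$ and put all of them into $\mathcal F_{n+1}$. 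Let $Y$ be the metrizable factor generated by $\mathcal F=\bigcup_n\mathcal F_n$, so that $Y=\varprojlim_n Y_n$ and $Y$ is minimal. To finish, note each $f_{n,k}$ descends to an orbit-Lipschitz $\bar f_{n,k}\in C(Y)$ whose values at the images of $p,q$ differ by more than $\epsilon^*/2$. Given any $\delta<\epsilon^*/8$ and any basic open $B'\subseteq Y$, pick $B_{n,k}$ whose pullback lies in $B'$ and split $B'$ according to whether $\bar f_{n,k}$ is near one witness value or the other; since an orbit-Lipschitz function changes by at most $\delta$ along $U_\delta$, this yields $C_0,C_1\subseteq_{op}B'$ with $C_0U_\delta\cap C_1=\emptyset$. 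Thus $U_\delta$ is topologically transitive on no open subset of $Y$, so by Rosendal's criterion $Y$ has no comeager orbit, i.e.\ all its orbits are meager.

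I expect the main obstacle to be the globalization in the second step—converting ``no compatibility point'' into one uniform gap $\epsilon^*$ valid on \emph{every} open set—together with the bookkeeping that every basic open of the inverse limit $Y$ is caught at some finite stage and that the descended $\bar f_{n,k}$ stay orbit Lipschitz. A guiding conceptual point is that $Y$ must be deliberately \emph{non}-MHP: by Theorem~\ref{Thm:Metrizability} a metrizable MHP flow is compatible at every point and hence automatically carries a comeager orbit, so the entire difficulty lies in engineering a metrizable factor that destroys topometric compatibility rather than inheriting it.
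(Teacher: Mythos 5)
Your proposal is correct in its overall architecture and takes a genuinely different route from the paper's proof, but two points need repair before it is airtight. First, there is a quantifier slip in your ``uniform gap'' step: as written, you assume that a \emph{single} open $B_0$ has open subsets of arbitrarily small $\partial$-diameter and claim you can nest $\overline{B_{n+1}}\subseteq B_n$ with $\partial\text{-diam}(B_n)\leq 1/n$. This does not follow, since the hypothesis produces small open subsets of $B_0$ only, not of $B_n$, and the small sets at different scales may be pairwise disjoint. What your nesting argument actually refutes is the statement ``\emph{every} nonempty open set has open subsets of arbitrarily small diameter'' --- which is the correct negation of ``some pair $(\epsilon, B_0)$ witnesses a gap'' --- because under that hypothesis each $B_n$, being itself open, contains a small-diameter open subset with which to continue the recursion. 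Since the conclusion you actually use downstream is exactly ``some $(\epsilon,B_0)$ witnesses the gap,'' the repair is only a matter of quantifying correctly; the globalization (minimality gives $X=\bigcup_{i<m} B_0g_i$, and Proposition~\ref{Prop:TopometricDynamics}(2) transfers the gap to each translate) is fine as is. Second, your final appeal to ``Rosendal's criterion'' uses the \emph{converse} of the direction quoted in the proof of Theorem~\ref{IntroThm:MHP}: you need ``comeager orbit $\Rightarrow$ local $U_\delta$-transitivity on some open subset of every open set,'' whereas the paper invokes only the sufficiency direction. The implication you need is true --- it is the other half of the equivalence in \cite{BYMT}, and can be proved directly from the Effros theorem, which the paper's own argument uses --- but you should either cite the full equivalence or supply that argument.

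With those repairs made, your proof is a genuine alternative to the paper's. The paper never globalizes the failure of compatibility: Lemma~\ref{Lem:GlobalNonComp} produces a single open set $A$ and a single $c>0$ with $x[c]$ nowhere dense for all $x\in A$ (proved via adequacy, Theorem~\ref{Thm:Adequate}, rather than by your nesting argument), and the set $A$ is then smuggled into one metrizable factor by pairing the separating orbit-Lipschitz map $\gamma$ (built from a countable dense subset of $X$ via Corollary~\ref{Cor:Topometric}) with a bump function $\alpha$ vanishing off $A$; the contradiction is then dynamical, through syndetic return times (Fact~\ref{Fact:SyndRet}), Lemma~\ref{Lem:SyndImage}, and Effros, against Lemma~\ref{Lem:UniformBallPreimage}. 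Your version instead proves a stronger, global uniform gap (leaning harder on minimality and on uniform continuity of translations) and builds the factor by an $\omega$-stage inverse-limit construction with diagonal bookkeeping over bases of the intermediate factors, concluding with the two-sided Rosendal criterion. What the paper's route buys is a one-step construction of the factor with no need for a global gap or a chain of factors; what your route buys is the elimination of the bump function and the syndeticity lemma (and, in the local step, of adequacy). Your closing conceptual remark is also on target: by Theorem~\ref{Thm:Metrizability}, a metrizable MHP flow has compatible $\partial$ and hence a comeager orbit, so any factor witnessing Theorem~\ref{Thm:Reflection} is necessarily non-MHP.
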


Therefore in addition to the notation of the previous sections, we assume that $X$ is minimal and does not have a comeager orbit. 

The metrizable factor of $X$ that we produce will be a space of uniformly continuous functions from $G$ (with its left-invariant metric uniformity) to a compact metric space. If $Y$ is a compact metric space, then $Y^G$ is a compact space when endowed with the product topology. The group $G$ acts on $Y^G$ by shift, where for $y\in Y^G$ and $g, h\in G$, we have $y\cdot g(h) = y(gh)$. Now suppose $y\in Y^G$ is uniformly continuous. Then $\overline{y\cdot G}$ is a uniformly equi-continuous family, and furthermore, the space $\overline{y\cdot G}$ is metrizable. To see why the last claim is true, note that pointwise convergence of a net of uniformly equi-continuous functions is determined by pointwise convergence on some countable dense subset of $G$. 

In order to obtain factors of $X$, we use functions which arise from $X$ in the following way. Suppose $f\colon X\to Y$ is continuous, and fix $x\in X$. Then we obtain a uniformly continuous function $f_x\colon G\to Y$ via $f_x(g) = f(xg)$. Then notice that $f_x\cdot g = f_{xg}$, and if $x_i\to y$, then $f_{x_i}\to f_y$. It follows that the map $x\to f_x$ is a surjective $G$-map of $X$ onto $\overline{f_x\cdot G}$.

We now turn towards the proof of the theorem. Our first task is to provide a ``global'' version of item (2) from Lemma~\ref{Lem:Compatibility}. This doesn't require minimality.
\vspace{2 mm}

\begin{lemma}
	\label{Lem:GlobalNonComp}
	Suppose $Z$ is an MHP flow with no comeager orbit. Then there is some $c > 0$ and $A\subseteq_{op} Z$ with $x[c]$ nowhere dense for every $x\in A$. 
\end{lemma}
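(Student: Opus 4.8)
The plan is to argue by contradiction and first to reduce the statement to a cleaner one about open balls. I record two easy facts. Since $\partial$ is lower semi-continuous (Definition~\ref{Def:TopometricSpace}), each closed ball $x[c]=\{y:\partial(x,y)\le c\}$ is a $\tau$-closed set, so ``$x[c]$ is nowhere dense'' is the same as ``$\mathrm{int}(x[c])=\varnothing$''. Moreover, if $\mathrm{int}(x(c'))=\varnothing$ then for every $c<c'$ we have $x[c]\subseteq x(c')$, hence $\mathrm{int}(x[c])\subseteq\mathrm{int}(x(c'))=\varnothing$. Thus it suffices to produce a single $c'>0$ and an $A\subseteq_{op}Z$ such that $\mathrm{int}(x(c'))=\varnothing$ for every $x\in A$; this is exactly the global strengthening of Lemma~\ref{Lem:Compatibility}(2) that we want, and any $c<c'$ then witnesses the lemma.

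For the main step I would set up a Baire category argument around the open sets $V_c:=\bigcup\{U\subseteq_{op}Z:\mathrm{diam}_\partial(U)<c\}$. A short computation (as in the proof of Lemma~\ref{Lem:Compatibility}) shows that $\bigcap_{c>0}V_c$ is precisely the set $Y$ of compatibility points, so $Z\setminus V_{1/n}$ is an increasing sequence of $\tau$-closed sets with union $Z\setminus Y$. Since $Z$ has no comeager orbit, Theorem~\ref{IntroThm:MHP} tells us that $Z$ has no compatibility point with dense orbit; in the minimal situation in which the lemma will be applied this forces $Y=\varnothing$, so the closed sets $Z\setminus V_{1/n}$ cover $Z$. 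By the Baire category theorem one of them, say $Z\setminus V_{1/m}$, has nonempty interior $A$, giving an open region all of whose points fail to have small-diameter neighborhoods. I would then take $c'$ comparable to $1/m$ and try to conclude $\mathrm{int}(x(c'))=\varnothing$ for $x\in A$.

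The hard part --- and the one place where the naive argument stalls --- is a \emph{location} problem: knowing that $x\in A$ has no small open neighborhood does not by itself rule out that the open set $W\subseteq x(c')$ witnessing $\mathrm{int}(x(c'))\ne\varnothing$ lives \emph{somewhere else}, parked inside $V_{1/m}$ at positive $\tau$-distance from $x$. Equivalently, a closed ball $x[c]$ can be ``fat'' with its interior located away from its center. To overcome this I would work not with $V_c$ but with the closed ``fat-center'' sets $W^{[c]}:=\bigcap_{w\in W}w[c]=\{x:W\subseteq x[c]\}$, which are $\tau$-closed by lower semi-continuity, and show that $D_c:=\bigcup_{W\subseteq_{op}Z}W^{[c]}=\{x:\mathrm{int}(x[c])\ne\varnothing\}$ is not dense for small $c$. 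The key leverage is minimality: since every orbit is dense and $G$ acts by metrically uniformly continuous maps (Proposition~\ref{Prop:TopometricDynamics}(2)), a single small-diameter open witness can be translated into any prescribed open set while keeping its $\partial$-diameter controlled, so if thick centers were dense everywhere one could relocate their small ball-interiors to build a nested sequence of open sets of $\partial$-diameter tending to $0$; its intersection would be a compatibility point, which by minimality has dense orbit, contradicting the hypothesis via Theorem~\ref{IntroThm:MHP}. Making this relocation precise --- in particular choosing the witnesses small enough \emph{before} fixing the translating group elements, so that Proposition~\ref{Prop:TopometricDynamics}(2) keeps diameters under the target threshold --- is the technical crux of the argument.
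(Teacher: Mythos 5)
Your proposal has the same contradiction skeleton as the paper's proof --- assume the conclusion fails, so that for every $c>0$ the set $D_c:=\{x\in Z:\mathrm{int}(x[c])\neq\emptyset\}$ is dense, manufacture a comeager set of compatibility points, and contradict the absence of a comeager orbit via Theorem~\ref{IntroThm:MHP} --- but it is incomplete at exactly the step you yourself flag as ``the technical crux,'' and the fix you sketch does not go through. The missing idea is \emph{adequacy} (Theorem~\ref{Thm:Adequate}), which the paper proved for precisely this purpose, and which dissolves your ``location problem'' in one line: if $U:=\mathrm{int}(x[c/3])\neq\emptyset$, then $\partial(x,U)\leq c/3$, so $x\in U(c/2)$; the set $U(c/2)$ is \emph{open} by adequacy, and $U(c/2)\subseteq x[c]$ by the triangle inequality, whence $x\in\mathrm{int}(x[c])$. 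Thus $D_{c/3}\subseteq E_c:=\{x:x\in\mathrm{int}(x[c])\}$ no matter where the interior of the ball sits, each $E_c$ is dense, each $\bigcup_{x\in E_{c}}\mathrm{int}(x[c])$ is open dense, and every point of $\bigcap_n\bigcup_{x\in E_{1/n}}\mathrm{int}(x[1/n])$ is a compatibility point; no relocation of witnesses is needed. This is exactly the content of Lemma~\ref{Lem:Compatibility}(2), which you cite for a side computation but never deploy at the one place where it is the whole point.

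Your substitute for adequacy --- relocating a small ball-interior into a prescribed open set using minimality and Proposition~\ref{Prop:TopometricDynamics}(2) --- has a genuine circularity that you acknowledge but do not resolve, and it cannot be resolved in the order you propose. The modulus of uniform continuity of $\rho_g$ depends on $g$: one needs $d>0$ with $g^{-1}U_dg\subseteq U_{c'}$, and nothing bounds this conjugation distortion uniformly over $g\in G$. The element $g$ that moves the witness $U=\mathrm{int}(x[c])$ into the target open set $W$ can only be chosen \emph{after} $U$ --- hence after its scale $c$ and its location --- is fixed, while the scale $c$ you are allowed to use is dictated by $g$. So ``choosing the witnesses small enough before fixing the translating group elements'' is not something one can arrange: the required smallness depends on $g$, and $g$ depends on the witness. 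This relocation step is also the only reason your argument is confined to minimal flows (beyond the appeal to Theorem~\ref{IntroThm:MHP}, which both proofs make in the presence of a dense orbit); the paper's adequacy step, by contrast, uses no dynamics at all. In short: correct skeleton, correctly identified crux, but the crux is not crossed, and adequacy is the tool that crosses it.
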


\begin{proof}
	Notice by the lower semi-continuity of $\partial$ that $x[c]$ is closed for every $c > 0$.
	Suppose towards a contradiction that for every $c > 0$, the set $D_c := \{x\in Z: \mathrm{int}(x[c]) \neq \emptyset\}$ is dense. Using adequacy, we see that for every $c > 0$, we have $D_{c/3}\subseteq E_c := \{x\in Z: x\in \mathrm{int}(x[c])\}$, so $E_c$ is also dense. Then $\partial$ is compatible at any point in the comeager set $\bigcap_{c > 0}\bigcup_{x\in E_c} \mathrm{int}(x[c])$. Theorem~\ref{IntroThm:MHP} then shows that $Z$ has a comeager orbit, contradicting our assumption. 
\end{proof}
\vspace{3 mm}

Fix $c > 0$ and $A\subseteq_{op} X$ as given by Lemma~\ref{Lem:GlobalNonComp}. Fix $D\subseteq X$ a countable dense set, and write $[D]^2 = \{\{p_i, q_i\}: i < \omega\}$. Keeping in mind Corollary~\ref{Cor:Topometric}, find $\gamma_i\in C_{OL}(X, [0,1])$ with $|\gamma_i(p_i) - \gamma_i(q_i)| > \partial(p_i, q_i)/2$. Let $\gamma\colon X\to [0,1]^\omega$ be the concatenation of the $\gamma_i$. It will be helpful to view $[0,1]^\omega$ as a topometric space whose metric is given by the uniform distance $d_u$.

\begin{lemma}
	\label{Lem:UniformBallPreimage}
	Let $B\subseteq [0,1]^\omega$ be a closed $d_u$-ball of radius $c/4$. Then $\gamma^{-1}(B)\cap A \subseteq X$ is nowhere dense.
\end{lemma}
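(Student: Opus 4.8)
The plan is to argue by contradiction, extracting from a supposed failure of nowhere-density a nonempty open set on which $\partial$ is uniformly small, and then colliding this with the conclusion of Lemma~\ref{Lem:GlobalNonComp}.

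First I would suppose that $\gamma^{-1}(B)\cap A$ is not nowhere dense. Since $\gamma$ is continuous and $B$ is closed, $\gamma^{-1}(B)$ is closed, so the closure of $\gamma^{-1}(B)\cap A$ is contained in $\gamma^{-1}(B)\cap\overline{A}$ and has nonempty interior $U$. Every point of $U$ lies in $\overline{A}$, so the open set $U$ meets $A$; setting $W = U\cap A$ gives $W\subseteq_{op} X$ with $W\subseteq A$ and $\gamma[W]\subseteq B$.

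Next I would bound the $\partial$-diameter of $W$. As $D$ is dense, $D\cap W$ is $\tau$-dense in $W$; fix $z\in D\cap W$. For any other $x\in D\cap W$ the pair $\{z,x\}$ equals some $\{p_i,q_i\}$, so $d_u(\gamma(z),\gamma(x))\ge |\gamma_i(z)-\gamma_i(x)| > \partial(z,x)/2$ by the choice of $\gamma_i$. Since $\gamma(z),\gamma(x)\in B$ and $B$ has radius $c/4$, we also have $d_u(\gamma(z),\gamma(x))\le c/2$; combining the two inequalities yields $\partial(z,x) < c$. This is the numerical heart of the argument: the factor $1/2$ lost in the separation property is exactly compensated by the choice of radius $c/4$, producing the same constant $c$ that governs Lemma~\ref{Lem:GlobalNonComp}.

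Finally I would upgrade this from $D\cap W$ to all of $W$ using lower semi-continuity of $\partial$. Given $x\in W$, choose a net $x_j\to x$ in $D\cap W$; since $\partial(z,x_j)\le c$ for all $j$ and the set $\{(p,q):\partial(p,q)\le c\}$ is $\tau$-closed, we get $\partial(z,x)\le c$, whence $W\subseteq z[c]$. But $z\in A$, so by Lemma~\ref{Lem:GlobalNonComp} the closed set $z[c]$ is nowhere dense, contradicting the fact that $W$ is a nonempty open subset of it. The only genuinely delicate step is this last transfer of a pointwise metric bound on the countable dense set to the whole open set $W$, which is precisely what lower semi-continuity of the topometric is designed to allow; everything else is bookkeeping with the defining properties of $\gamma$.
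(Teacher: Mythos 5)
Your proposal is correct and is essentially the paper's argument run contrapositively: the paper directly shows that any non-empty open $W\subseteq A$ contains two points $p,q\in D$ with $\partial(p,q)>c$ (produced using nowhere-density of $p[c]$), whose $\gamma$-images are then $d_u$-separated by more than $c/2$ and so cannot both lie in $B$, giving $W\not\subseteq\gamma^{-1}(B)$. You instead assume $W\subseteq\gamma^{-1}(B)$, use the same two ingredients (the separating property of the $\gamma_i$ and the radius $c/4$) to bound the $\partial$-diameter of $D\cap W$ by $c$, and invoke nowhere-density of $z[c]$ only at the end; this is the same proof with the roles rearranged, at the cost of one extra (correct, routine) lower semi-continuity step to pass from $D\cap W$ to all of $W$.
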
 

\begin{proof}
	As $\gamma^{-1}(B)\cap A$ is relatively closed in $A$, we show that it has empty interior. Let $W\subseteq A$ be non-empty open. Pick $p\in W\cap D$. Then $p[c]$ is a closed, nowhere dense set, so find $q\in (W\setminus p[c])\cap D$. Then $p, q\in W$ with $\partial(p, q) > c$. Suppose that $\{p, q\} = \{p_k, q_k\}$. Then $|\gamma_k(p) - \gamma_k(q)| > c/2$. In particular, $d_u(\gamma(p), \gamma(q)) > c/2$, so $W\not\subseteq \gamma^{-1}(B)$.
\end{proof}
\vspace{2 mm}

\begin{lemma}
	\label{Lem:SyndImage}
	Suppose $Z$ is a minimal $G$-flow, $x\in Z$, and $S\subseteq G$ is syndetic. Then $x\cdot S\subseteq Z$ is somewhere dense.
\end{lemma}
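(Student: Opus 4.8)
The plan is to exploit syndeticity to cover $G$ by finitely many right-translates of $S$, push this decomposition through the dense orbit of $x$, and then close with a Baire-type argument. The only inputs I need are the definition of syndetic, minimality of $Z$, and the fact that the $G$-action is by homeomorphisms.

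First I would unpack the hypotheses. Since $S$ is syndetic, fix a finite $F\subseteq G$ with $SF = G$. Since $Z$ is minimal, the orbit $x\cdot G$ is dense, so $\overline{x\cdot G} = Z$. Using that the action is a right action together with $G = SF$ and associativity ($x\cdot(sf) = (x\cdot s)\cdot f$), I would write
\[ x\cdot G \;=\; \bigcup_{f\in F}(x\cdot S)\cdot f. \]
Taking closures, using that closure commutes with finite unions, and that right multiplication $\rho_f$ by each $f\in F$ is a homeomorphism of $Z$ (so $\overline{(x\cdot S)\cdot f} = \overline{x\cdot S}\cdot f$), I obtain
\[ Z \;=\; \overline{x\cdot G} \;=\; \bigcup_{f\in F}\overline{x\cdot S}\cdot f, \]
exhibiting $Z$ as a finite union of closed sets.

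The key step is then a Baire-type observation: a finite union of closed nowhere dense sets is again nowhere dense, so since $Z$ itself is not nowhere dense (it is its own nonempty interior), at least one of the closed sets $\overline{x\cdot S}\cdot f$ must have nonempty interior. Applying the homeomorphism $\rho_{f}^{-1}$ to that piece, it follows that $\overline{x\cdot S}$ has nonempty interior, i.e.\ $x\cdot S$ is somewhere dense, as desired. I do not expect any genuine obstacle here; the only points requiring care are the bookkeeping that right translation is a homeomorphism (so that closures and interiors pass correctly through $\rho_f$), and the justification that finitely many closed sets covering $Z$ cannot all have empty interior—either via the elementary fact that a finite union of nowhere dense sets is nowhere dense, or equivalently via the Baire category theorem applied to the compact Hausdorff space $Z$.
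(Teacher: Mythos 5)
Your proof is correct and follows essentially the same route as the paper: cover $G$ by finitely many right translates of $S$, observe that the dense orbit $x\cdot G$ is then a finite union of translates of $x\cdot S$, conclude one of them is somewhere dense, and translate back. The paper's version is just terser, phrasing the finite-union step directly in terms of somewhere-density rather than via closures and the Baire property of $Z$.
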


\begin{proof}
	Since $S\subseteq G$ is syndetic, find $g_0,\ldots,g_{k-1}\in G$ with $\bigcup_{i< k} Sg_i = G$. Then $\bigcup_{i< k}(x\cdot S)\cdot g_i = x\cdot G\subseteq Z$ is dense, so $x\cdot Sg_i$ is somewhere dense for some $i< k$. Then by translating, $x\cdot S$ is somewhere dense as well.
\end{proof}
\vspace{2 mm}

Now let $\alpha\colon X\to [0,1]$ be a continuous function with $\alpha^{-1}(\{1\})\neq \emptyset$ and \\ $\alpha[X \setminus A] = \{0\}$. Form the function $\theta = \alpha\times \gamma\colon X\to [0,1]\times [0,1]^\omega$. Pick $p\in X$, and then form $\theta_p\colon G\to [0,1]\times [0,1]^\omega$.

We will show that $\overline{\theta_p\cdot G}$ has all orbits meager.	Towards a contradiction, suppose $\theta_q\in \overline{\theta_p\cdot G}$ belonged to a comeager orbit; as $\{q\in X: \alpha(q) > 3/4\}\subseteq X$ is open, we may assume that $q$ belongs to this set. Let $r >0$ be small enough so that both $r < c/4$ and for $g\in U_r$, we have $\alpha(qg) > 1/2$. By the Effros theorem, $\theta_q \cdot U$ is a relatively open subset of $\theta_q\cdot G$. By Fact~\ref{Fact:SyndRet}, it follows that $S := \{g\in G: \theta_q\cdot g\in \theta_q\cdot U\}$ is syndetic, so by Lemma~\ref{Lem:SyndImage}, $q\cdot S\subseteq X$ is somewhere dense. Furthermore, for $g\in G$, we have
\begin{align*}
\theta_q\cdot g(1_G) &=  \theta(qg)\\[1 mm]
&= (\alpha(qg), \gamma(qg)).
\end{align*}
It follows that for $g\in S$, there is $h\in U$ with $\alpha(qg) = \alpha(qh) > 1/2$. Hence $q\cdot S\subseteq A$. However, by item (1) of Proposition~\ref{Prop:TopometricDynamics}, $\gamma[q\cdot S] = \gamma[q\cdot U]$ lies in a $d_u$-ball of radius $c/4$, contradicting Lemma \ref{Lem:UniformBallPreimage}.
\vspace{2 mm}

\begin{que}
	\label{Que:NeedHP}
	Theorem~\ref{Thm:Reflection} shows that for \emph{any} non-metrizable minimal $G$-flow $X$ all of whose orbits are meager, we have a factor $\phi\colon S_G(X)\to Y$ where $Y$ is metrizable and has all orbits meager. Is it necessary to pass to the universal highly proximal extension? More precisely, is there an example of a Polish group $G$ and a minimal $G$-flow $X$ with all meager orbits, but all of whose metrizable factors have a comeager orbit? 
\end{que}

\section{Distal universal minimal flows}

As an application of Theorem~\ref{Thm:Reflection}, we prove Theorem~\ref{Thm:DistalFlows}, a characterization of when a Polish group $G$ has distal universal minimal flow.
\vspace{2 mm}

\begin{defin}\mbox{}
	\label{Def:Distal}
	A $G$-flow $X$ is called \emph{distal} if for any pair of points $x\neq y\in X$ and any net $g_i$ from $G$ with $xg_i\to z\in X$, we have $yg_i\not\to z$.
\end{defin}
\vspace{0 mm}

\begin{theorem}
	\label{Thm:DistalFlows}
	Let $G$ be a Polish group, and assume that $M(G)$ is distal. Then $M(G)$ is metrizable.
\end{theorem}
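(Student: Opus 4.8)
The plan is to assume $M(G)$ is distal but not metrizable and derive a contradiction. Since $M(G)$ is MHP (Section 3) and minimal, Theorem~\ref{IntroThm:MHP} gives a clean dichotomy: either $M(G)$ has a comeager orbit, or all of its orbits are meager. I will treat these two cases separately. Throughout I use two soft facts about distality: first, that factors of distal flows are distal (the enveloping semigroup of a factor is a quotient of the original, hence remains a group; see \cite{Aus}); and second, the observation recorded in Section~3 that a highly proximal extension of a \emph{minimal} flow is automatically proximal. The genuine application of Theorem~\ref{Thm:Reflection} is the meager case, which I expect to be the conceptual heart of the argument.

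\medskip
\noindent\textit{The all-orbits-meager case.} Suppose all orbits of $M(G)$ are meager. By Theorem~\ref{Thm:Reflection} there is a factor $\phi\colon M(G)\to Y$ with $Y$ metrizable and all orbits meager; note $Y$ is minimal (a factor of a minimal flow) and distal (a factor of a distal flow). The key move is to promote $Y$ to an MHP flow. Since $M(G)$ is MHP, Proposition~\ref{prop:MHP} identifies $M(G)$ with $S_G(M(G))$, so the factor $\phi$ lifts through the universal highly proximal extension to a factor $M(G)\to S_G(Y)$ with $\pi_Y$ composed with it equal to $\phi$ (this is the functoriality of the $S_G(-)$ construction from \cite{ZucProx}; alternatively one pulls $\pi_Y$ back along $\phi$ and uses that the pullback of a highly proximal map is highly proximal, together with Proposition~\ref{prop:MHP}). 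Thus $S_G(Y)$ is a factor of $M(G)$, hence distal. But $\pi_Y\colon S_G(Y)\to Y$ is highly proximal, hence proximal, and a proximal extension of a \emph{distal} flow must be an isomorphism, since its fibers consist of proximal pairs which distality collapses to points. Therefore $Y\cong S_G(Y)$ is itself MHP. Now $Y$ is a metrizable MHP minimal flow, so by Theorem~\ref{Thm:Metrizability} the metric $\partial_Y$ is compatible, i.e.\ every point of $Y$ is a compatibility point; since $Y$ is minimal it has a compatibility point with dense orbit, and Theorem~\ref{IntroThm:MHP} $(1)\Rightarrow(3)$ yields a comeager orbit in $Y$. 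This contradicts that all orbits of $Y$ are meager, so this case cannot occur and $M(G)$ must have a comeager orbit.

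\medskip
\noindent\textit{The comeager-orbit case.} By Theorem~\ref{IntroThm:MHP} we may write $M(G)\cong \Sa(H\backslash G)$ for some closed $H\subseteq G$, with comeager orbit $H\backslash G$ and, by Proposition~\ref{Prop:CompPointsInSamuel}, set of compatibility points equal to the completion $\widehat{H\backslash G}$. If every point of $\Sa(H\backslash G)$ were a compatibility point, then $\Sa(H\backslash G)=\widehat{H\backslash G}$ would be a compact \emph{Polish} space, hence metrizable, contrary to assumption; equivalently, using Fact~\ref{Fact:CompletionInSamuel}, non-metrizability forces $(H\backslash G,d_G)$ to fail to be totally bounded. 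So there exist $c>0$ and an infinite $c$-separated family $\{H g_n : n<\omega\}$ inside the orbit $H\backslash G$, i.e.\ a family of points in a single orbit that are pairwise $\partial$-far. The goal is to extract from this configuration a nontrivial proximal pair, contradicting distality.

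\medskip
\noindent\textit{Where the difficulty lies.} Producing the proximal pair is the main obstacle. A non-compatibility point $p$ supplies, via Lemma~\ref{Lem:Compatibility}(2), a net $q_i\to_\tau p$ with $\partial(q_i,p)\geq c$: points that are topologically close to $p$ yet metrically separated from it. The plan is to organize this ``$\tau$-close but $\partial$-far'' behavior into an actual proximal pair, for instance by analyzing the orbit closures of the points $(p,q_i)$ in the product flow $X\times X$ (which is again distal, hence pointwise almost periodic) and exploiting that these pairs accumulate on the diagonal $\Delta_X$, the orbit closure of $(p,p)$. Converting this accumulation into the statement that some $q_i$ is genuinely proximal to $p$ is the delicate step, since the decomposition of $X\times X$ into minimal sets need not be upper semicontinuous. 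Once a nontrivial proximal pair is obtained, distality is contradicted, so $\Sa(H\backslash G)$ is metrizable. Combining the two cases, $M(G)$ distal forces $M(G)$ metrizable, as desired.
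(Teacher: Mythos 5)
Your meager-orbit case is correct, and it is in fact a pleasant self-contained variant of the paper's argument: where the paper invokes Fact~\ref{Fact:HPNonMetr} (from \cite{ZucProx}) to see that $S_G(Y)\to Y$ is a non-trivial highly proximal extension and hence $S_G(Y)$ is not distal, you instead use distality of $S_G(Y)$ (a factor of $M(G)$) to force $\pi_Y$ to be an isomorphism, conclude that $Y$ is MHP, and then contradict ``all orbits meager'' via Theorem~\ref{Thm:Metrizability} and Theorem~\ref{IntroThm:MHP}; this stays entirely inside the paper's own machinery. The genuine gap is your comeager-orbit case: you reduce it to producing a non-trivial proximal pair from a non-compatibility point and then explicitly leave that extraction unfinished. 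As written, half the proof is missing, and the route you sketch is unlikely to close: in a distal flow every pair $(p,q_i)$ is automatically almost periodic and distal, and nothing about ``$\tau$-close but $\partial$-far'' points toward proximality, which is precisely why you run into the non-upper-semicontinuity obstruction you name.

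The paper never has to face this case, because distality kills it at the outset via the enveloping semigroup rather than the topometric. If $M(G)$ is distal, then $E(M(G))$ is a group, $E(M(G))\cong M(G)$, and left multiplication by elements of $E(M(G))$ gives $G$-flow automorphisms acting transitively on points of $M(G)$. If $M(G)$ is moreover non-metrizable, then by Theorem~\ref{Thm:Metrizability} it embeds $\beta\omega$, so $|M(G)| = 2^{\mathfrak{c}}$, while each orbit has cardinality at most $|G|\leq \mathfrak{c}$; hence there are at least two orbits. An automorphism is a homeomorphism carrying orbits onto orbits, so if some orbit were comeager, its image under an automorphism moving it off itself would be a second, disjoint comeager orbit, which is impossible in a compact space by Baire category. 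Therefore all orbits of $M(G)$ are meager, and only your first case is ever needed. The repair to your write-up is to replace the entire second case (and the proximal-pair program) with this automorphism-transitivity argument.
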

\vspace{2 mm}

In \cite{MNT}, the authors consider Polish groups which are \emph{strongly amenable}, groups which admit no non-trivial minimal proximal flows. They prove that if $G$ is strongly amenable and $M(G)$ is metrizable, then $G$ has a closed, normal, extremely amenable subgroup $H$ with $G/H$ compact and $M(G) \cong G/H$. As any group $G$ with $M(G)$ distal is also strongly amenable, we obtain the following corollary.
\vspace{2 mm}

\begin{cor}
	\label{Cor:DistalForm}
	Let $G$ be a Polish group with $M(G)$ distal. Then $G$ has a closed, normal, extremely amenable subgroup $H$ with $G/H$ compact and $M(G)\cong G/H$.
\end{cor}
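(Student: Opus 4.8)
The plan is to deduce the corollary from Theorem~\ref{Thm:DistalFlows} together with the structure theorem of Melleray, Nguyen Van Th\'e, and Tsankov \cite{MNT} recalled just above; the only new content is the observation that distality of $M(G)$ forces $G$ to be strongly amenable, after which everything is a matter of assembling cited results.

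First I would record the two inputs. On one hand, Theorem~\ref{Thm:DistalFlows} gives that $M(G)$ is metrizable. On the other hand, \cite{MNT} asserts that if $G$ is a strongly amenable Polish group with $M(G)$ metrizable, then $G$ has a closed, normal, extremely amenable subgroup $H$ with $G/H$ compact and $M(G)\cong G/H$. Since metrizability of $M(G)$ is already in hand, it suffices to check that $G$ is strongly amenable, i.e.\ that $G$ admits no non-trivial minimal proximal flow.

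Next I would verify strong amenability. Let $X$ be an arbitrary minimal proximal $G$-flow. Because $M(G)$ is the universal minimal flow, there is a factor $\psi\colon M(G)\to X$, so $X$ is a factor of a distal flow. I would then invoke the classical fact that factors of distal flows are distal: passing to enveloping semigroups, $\psi$ induces a surjective homomorphism $E(M(G))\to E(X)$, and since $M(G)$ is distal its enveloping semigroup is a group, whence so is its homomorphic image $E(X)$; therefore $X$ is distal (see \cite{Aus}). Finally I would note that a flow which is simultaneously distal and proximal must be a single point: if $x\neq y$ in $X$, proximality of $X$ yields a net $g_i$ in $G$ and a point $z$ with $xg_i\to z$ and $yg_i\to z$, in direct violation of Definition~\ref{Def:Distal}. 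Thus every minimal proximal $G$-flow is trivial, so $G$ is strongly amenable, and the conclusion follows by applying \cite{MNT}.

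Since Theorem~\ref{Thm:DistalFlows} does all the hard work, I do not anticipate a real obstacle. The one step that deserves care is the assertion that factors of distal flows are distal; the enveloping-semigroup argument above settles this cleanly, whereas a naive attempt to lift a proximal pair from $X$ back up to $M(G)$ runs into the difficulty that the two lifted orbits need not converge to a \emph{common} point, so I would present the enveloping-semigroup version rather than the lifting argument.
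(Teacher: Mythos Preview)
Your proposal is correct and follows essentially the same route as the paper: the paper derives the corollary immediately from Theorem~\ref{Thm:DistalFlows} plus the \cite{MNT} structure theorem, having noted just before the statement that any group with distal $M(G)$ is strongly amenable. You simply spell out that strong-amenability step (factors of distal are distal, and distal $+$ proximal $\Rightarrow$ trivial), which the paper leaves as a one-line remark and records separately as Proposition~\ref{Prop:NonDistalLifts}.
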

\vspace{2 mm}

We briefly review some facts about enveloping semigroups and distal flows; see \cite{Aus} for more detail. To any $G$-flow $X$, we can associate to it the \emph{enveloping semigroup} $E(X)$. Given $g\in G$, form the function $\rho_g\colon X\to X$ given by $\rho_g(x) = xg$. Then $E(X)$ is the closure of the set $\{\rho_g: g\in G\}$ in the compact space $X^X$. Each $f\in E(X)$ is a function, and because we take our $G$-flows to be right actions, it will be more convenient to write function application and composition on the right, i.e.\ for $x\in X$ and $f\in E(X)$, we write $xf$ instead of $f(x)$. Then $E(X)$ becomes a compact left-topological semigroup, in particular a $G$-flow, where $f\cdot g = \rho_g\circ f$. For any $x\in X$, the map $\lambda_x\colon E(X)\to X$ given by $\lambda_x(f) = xf$ is a $G$-map. 

When $X$ is distal, then $E(X)$ is a group. Furthermore, if $X$ is also minimal, then $E(X)$ is a minimal distal system. If $f\in E(X)$, then the left multiplication map $\lambda_f$ is a $G$-flow automorphism. In particular, if $M(G)$ is distal, then $E(M(G))\cong M(G)$, and for any $p, q\in M(G)$, there is a $G$-flow automorphism $\phi$ with $\phi(p) = q$. 

In the proof of Theorem~\ref{Thm:DistalFlows}, we will need the following simple proposition.
\vspace{2 mm}

\begin{prop}[\cite{Aus}, Cor.\ 7(c)]
	\label{Prop:NonDistalLifts}
	Let $Y$ be a distal flow, and let $\phi\colon Y\to X$ be a factor. Then $X$ is also distal
\end{prop}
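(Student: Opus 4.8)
The plan is to argue by contradiction, lifting a hypothetical proximal pair from $X$ to $Y$ and then exploiting that the enveloping semigroup $E(Y)$ is a group to ``run the limiting net backwards.'' Suppose $X$ is not distal, so there are $x\neq y$ in $X$ and a net $(g_i)$ in $G$ with $xg_i\to z$ and $yg_i\to z$ for a common $z\in X$. Since $\phi$ is surjective, I would fix lifts $y_0\in\phi^{-1}(\{x\})$ and $y_1\in\phi^{-1}(\{y\})$; because $\phi(y_0)=x\neq y=\phi(y_1)$ these are distinct. Passing to a subnet, I may assume $\rho_{g_i}\to p$ in $E(Y)\subseteq Y^Y$, so that $y_0g_i\to w_0:=y_0p$ and $y_1g_i\to w_1:=y_1p$. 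The key observation is that $\phi(w_0)=\lim\phi(y_0)g_i=\lim xg_i=z$, and likewise $\phi(w_1)=z$: both limit points lie in the \emph{same} fiber over $z$, even though $w_0$ and $w_1$ themselves may differ.

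Next I would use distality of $Y$. Since $Y$ is distal, $E(Y)$ is a group with identity $\mathrm{id}_Y$, so $p$ has a two-sided inverse $p^{-1}\in E(Y)$, giving $w_0p^{-1}=y_0$ and $w_1p^{-1}=y_1$. As $p^{-1}$ lies in the closure of $\{\rho_g:g\in G\}$, I can write $p^{-1}=\lim_j\rho_{h_j}$ for some net $(h_j)$ in $G$; convergence in $Y^Y$ is pointwise, so this \emph{single} net simultaneously satisfies $w_0h_j\to y_0$ and $w_1h_j\to y_1$. Applying the continuous, equivariant map $\phi$ then yields $zh_j=\phi(w_0)h_j=\phi(w_0h_j)\to\phi(y_0)=x$ and, along the same net, $zh_j=\phi(w_1h_j)\to\phi(y_1)=y$. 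Thus the net $(zh_j)$ in $X$ converges to both $x$ and $y$, and Hausdorffness forces $x=y$, the desired contradiction.

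The only genuinely nontrivial input is the structural fact, already recorded in the excerpt, that a distal flow has $E(Y)$ a group; everything else is bookkeeping with nets together with the identities $w_kp^{-1}=y_k$. The subtle point I would be most careful about is precisely that $w_0$ and $w_1$ need not coincide: the argument deliberately does \emph{not} try to manufacture a proximal pair in $Y$ (which one should not expect in general), but instead leverages that $w_0$ and $w_1$ share the common fiber image $z$, so that reversing $p$ drives the one net $(zh_j)$ toward the two distinct points $x$ and $y$. I would make sure the net $(h_j)$ realizing $p^{-1}$ is chosen once and works for $w_0$ and $w_1$ at the same time, which is exactly what pointwise convergence in $Y^Y$ provides. (Alternatively, the same conclusion follows by noting that $\phi$ induces a surjective semigroup homomorphism $E(Y)\to E(X)$, whence $E(X)$ is a homomorphic image of a group and hence a group with identity $\mathrm{id}_X$, so every element of $E(X)$ is injective and $X$ is distal; I prefer the net argument above as it is more self-contained.)
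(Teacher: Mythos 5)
Your proof is correct. Note that the paper does not actually prove this proposition: it is quoted as a known fact, with a citation to Auslander's book (Cor.\ 7(c)), so there is no internal argument to compare against. Your net argument is a sound, self-contained unwinding of the standard enveloping-semigroup proof. The two key steps both check out: (i) passing to a subnet with $\rho_{g_i}\to p$ in $Y^Y$ gives $w_0=y_0p$, $w_1=y_1p$ lying over $z$ by continuity and equivariance of $\phi$; (ii) since $Y$ is distal, $E(Y)$ is a group, and its identity is indeed $\mathrm{id}_Y$ (this deserves the one-line justification you implicitly use: $\mathrm{id}_Y=\rho_{1_G}\in E(Y)$, and a group has a unique idempotent), so $w_kp^{-1}=y_k$ and a single net $(h_j)$ realizing $p^{-1}$ pointwise drives $zh_j$ to both $x$ and $y$, contradicting Hausdorffness. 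You are also right to emphasize that one should not try to produce a proximal pair in $Y$; working with the common fiber image $z$ is what makes the reversal argument work. Your parenthetical alternative --- that $\phi$ induces a surjective semigroup homomorphism $E(Y)\to E(X)$, so $E(X)$ is a group and $X$ is distal --- is essentially the textbook proof behind the cited corollary, so either version would serve as a legitimate substitute for the citation; the net version you prefer trades that structural machinery for elementary bookkeeping, at the cost of re-deriving a special case of it.
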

\vspace{2 mm}

We will also need to recall the main result of \cite{ZucProx}.
\vspace{2 mm}

\begin{fact}[\cite{ZucProx}, Cor.\ 3.3]
	\label{Fact:HPNonMetr}
	If $X$ is a minimal, metrizable flow with all orbits meager, then the universal highly proximal extention $S_G(X)$ is non-metrizable. In particular, the map $\pi_X\colon S_G(X)\to X$ is a non-trivial highly proximal extension.
\end{fact}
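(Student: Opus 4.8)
The plan is to use distality to make the compatibility structure homogeneous, and then to play the reflection theorem against Fact~\ref{Fact:HPNonMetr}. I would first observe that every $G$-flow automorphism $\phi$ of $M(G)$ is a $\partial$-isometry: since both $\phi$ and $\phi^{-1}$ are $G$-maps between the MHP flow $M(G)$ and itself, two applications of item (3) of Proposition~\ref{Prop:TopometricDynamics} give $\partial(\phi(x),\phi(y)) = \partial(x,y)$. Because $M(G)$ is distal, the enveloping semigroup is a group and the group of flow automorphisms acts transitively on $M(G)$; as each automorphism is an isometric homeomorphism, it carries $p(c)$ to $\phi(p)(c)$ and interiors to interiors, hence sends compatibility points to compatibility points. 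Consequently the set $Y$ of compatibility points of $M(G)$ is either empty or all of $M(G)$.

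If $Y = M(G)$, then Theorem~\ref{Thm:Metrizability} says $\partial$ is a compatible metric, so $M(G)$ is metrizable and we are done. It therefore suffices to derive a contradiction from $Y = \emptyset$. In that case $M(G)$ has no compatibility point with dense orbit, so Theorem~\ref{IntroThm:MHP} shows $M(G)$ has no comeager orbit; since $M(G)$ is minimal, all of its orbits are meager. Now I would apply Theorem~\ref{Thm:Reflection} to obtain a factor $\phi\colon M(G)\to Z$ with $Z$ metrizable and all orbits meager. As a factor of the distal flow $M(G)$, the flow $Z$ is distal by Proposition~\ref{Prop:NonDistalLifts}, and it is minimal. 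Fact~\ref{Fact:HPNonMetr} then guarantees that $\pi_Z\colon S_G(Z)\to Z$ is a non-trivial highly proximal extension; in particular $S_G(Z)$ is minimal and $\pi_Z$ is not injective.

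The contradiction is obtained by viewing $S_G(Z)$ from two sides. Being minimal, $S_G(Z)$ is a factor of the universal minimal flow $M(G)$, so Proposition~\ref{Prop:NonDistalLifts} forces it to be distal. Yet a non-trivial highly proximal extension of a minimal flow is proximal (as noted before Fact~\ref{Fact:HPOriginal}), so the non-injectivity of $\pi_Z$ produces distinct $z_0,z_1\in S_G(Z)$ together with a net $g_i$ satisfying $z_0g_i\to w$ and $z_1g_i\to w$ --- precisely what distality of $S_G(Z)$ forbids. This rules out $Y=\emptyset$ and finishes the proof.

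I expect this final collision to be the crux: the decisive point is that the highly proximal extension $S_G(Z)$ of the metrizable reflection is again a factor of $M(G)$, hence distal, so the non-triviality provided by Fact~\ref{Fact:HPNonMetr} directly contradicts distality. The homogeneity argument of the first paragraph is the supporting device that neutralizes the comeager-orbit case and lets us assume all orbits are meager.
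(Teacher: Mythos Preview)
Your proposal is not a proof of Fact~\ref{Fact:HPNonMetr} at all: it is a proof of Theorem~\ref{Thm:DistalFlows}. You explicitly \emph{invoke} Fact~\ref{Fact:HPNonMetr} in the middle of your argument, so taken as a proof of the fact itself it is circular. Fact~\ref{Fact:HPNonMetr} is quoted from \cite{ZucProx} and is not reproved in this paper; it asserts that for a minimal metrizable $G$-flow $X$ with all orbits meager the space $S_G(X)$ is non-metrizable. Nothing in what you wrote addresses this claim.

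If, as seems likely, you intended to prove Theorem~\ref{Thm:DistalFlows}, then your argument is correct and runs parallel to the paper's. The only differences are cosmetic. First, to reach ``all orbits meager'' the paper argues via cardinality (Theorem~\ref{Thm:Metrizability} gives $|M(G)|=2^{\mathfrak c}$, so there is more than one orbit, and transitivity of the automorphism group rules out a comeager one), whereas you argue via compatibility points (the automorphism group is transitive and $\partial$-isometric, so the compatibility set is $\emptyset$ or everything; in the empty case Theorem~\ref{IntroThm:MHP} kills comeager orbits). These are interchangeable routes to the same conclusion. Second, for the final contradiction the paper phrases it as ``$S_G(X)$ is not distal, so by Proposition~\ref{Prop:NonDistalLifts} neither is $M(G)$,'' while you phrase it as ``$S_G(Z)$ is distal (being a factor of $M(G)$) yet contains a proximal pair.'' These are logically the same collision viewed from opposite ends.
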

\vspace{2 mm}

We can now complete the proof of Theorem~\ref{Thm:DistalFlows}. Towards a contradiction, suppose $M(G)$ were distal, but not metrizable. Then by Theorem~\ref{Thm:Metrizability}, we have $|M(G)| = 2^\mathfrak{c}$, so in particular $M(G)$ contains more than one orbit. As there is a $G$-flow automorphism bringing any one orbit to any other, we see that $M(G)$ contains all meager orbits. By Theorem~\ref{Thm:Reflection}, let $X$ be a minimal metrizable flow with all meager orbits. Then by Fact~\ref{Fact:HPNonMetr}, $\pi_X\colon S_G(X)\to X$ is a non-trivial highly proximal extension of minimal flows, which implies that $S_G(X)$ is not distal. Now let $\phi\colon M(G)\to S_G(X)$ be a $G$-map. By Proposition~\ref{Prop:NonDistalLifts}, we must also have $M(G)$ not distal, completing our contradiction.

\noindent
Andy Zucker

\noindent
Universit\'e Paris Diderot

\noindent
andrew.zucker@imj-prg.fr

\end{document}